\title[Solving Convex-Concave Problems with $\tilde{\mathcal{O}}(\epsilon^{-4/7})$ Second-Order Oracle Complexity]{Solving Convex-Concave Problems with $\tilde{\mathcal{O}}(\epsilon^{-4/7})$ Second-Order \\ Oracle Complexity}
\def\eqref#1{equation~\ref{#1}}
\def\1{\bm{1}}
\def\va{{\bm{a}}}
\def\vb{{\bm{b}}}
\def\vc{{\bm{c}}}
\def\vg{{\bm{g}}}
\def\vu{{\bm{u}}}
\def\vv{{\bm{v}}}
\def\vx{{\bm{x}}}
\def\vy{{\bm{y}}}
\def\vz{{\bm{z}}}
\def\mA{{\bm{A}}}
\def\mF{{\bm{F}}}
\DeclareMathAlphabet{\mathsfit}{\encodingdefault}{\sfdefault}{m}{sl}
\SetMathAlphabet{\mathsfit}{bold}{\encodingdefault}{\sfdefault}{bx}{n}
\def\gI{{\mathcal{I}}}
\def\gM{{\mathcal{M}}}
\def\gO{{\mathcal{O}}}
\def\gS{{\mathcal{S}}}
\def\gX{{\mathcal{X}}}
\def\gY{{\mathcal{Y}}}
\def\gZ{{\mathcal{Z}}}
\def\sR{{\mathbb{R}}}
\newtheorem{thm}{Theorem}[section]
\newtheorem{dfn}{Definition}[section]
\newtheorem{lem}{Lemma}[section]
\newtheorem{asm}{Assumption}[section]
\newtheorem{cor}{Corollary}[section]
\begin{document}

\maketitle
\begingroup
\begin{NoHyper}
\renewcommand\thefootnote{${}^\dagger$}
\footnotetext{The corresponding author.}
\end{NoHyper}
\endgroup
\begin{abstract}%
Previous algorithms can solve convex-concave minimax problems $\min_{x \in \mathcal{X}} \max_{y \in \mathcal{Y}} f(x,y)$ with $\gO(\epsilon^{-2/3})$ second-order oracle calls using Newton-type methods.   
This result has been speculated to be optimal because the upper bound is achieved by a natural generalization of the optimal first-order method.
In this work, we show an improved upper bound of
$\tilde{\gO}(\epsilon^{-4/7})$ 
by generalizing the optimal second-order method for convex optimization to solve the convex-concave minimax problem. 
We further apply a similar technique to lazy Hessian algorithms and show that our proposed algorithm can also be seen as a second-order ``Catalyst'' framework \citep{lin2018catalyst} that could accelerate any globally convergent algorithms for solving minimax problems. 
\end{abstract}

\begin{keywords} Minimax Optimization; Second-Order Methods; Acceleration%
\end{keywords}

\section{Introduction}
We study the convex-concave minimax optimization problem over convex and compact sets $\gX$, $\gY$:
\begin{align} \label{prob:main}
\min_{\vx \in \gX} \max_{\vy \in \gY} f(\vx, \vy).
\end{align}
This problem naturally arises from many applications, including finding a Nash equilibrium of a two-player zero-sum game~\citep{von1947theory,karlin2017game,carmon2019variance,carmon2020coordinate,kornowski2024oracle}, solving the Lagrangian function in constrained optimization \citep{ouyang2021lower,kovalev2020optimal,scaman2017optimal}, and many machine learning problems such as adversarial training \citep{zhang2018mitigating}, AUC maximization~\citep{ying2016stochastic} and distributionally robust optimization~\citep{ben2009robust,carmon2022distributionally,curi2020adaptive,song2022coordinate}. 

Problem (\ref{prob:main}) can also be viewed as variational inequality problems~\citep{kinderlehrer2000introduction}.
Specifically, we let $\vz = (\vx,\vy)$ and $\gZ = \gX \times \gY$, 
then Problem (\ref{prob:main}) can be formulated by the following variational inequality (VI) problem that targets to find a solution $\vz^* \in \gZ $ such that 
\begin{align} \label{prob:VI}
    \langle \mF(\vz), \vz - \vz^* \rangle \ge 0~~\text{for all}~\vz \in \gZ, \ \ 
    \text{where}~~  \mF(\vz) = \begin{bmatrix}
        \nabla_x f(\vx,\vy) \\
        - \nabla_y f(\vx,\vy)
\end{bmatrix}
\end{align}
is monotone. 
The algorithms for solving monotone VIs are well-studied in the literature and can be used to solve our Problem (\ref{prob:VI}).  
The seminal work by 
\citet{korpelevich1976extragradient} showed the extragradient (EG) method can find an $\epsilon$-solution to Problem (\ref{prob:main}) with the $\gO(\epsilon^{-1})$ calls of the first-order oracle~$\mF(\vz)$.
\citet{nemirovskij1983problem,ouyang2021lower}
showed that any first-order algorithm for a bilinear minimax problem must make at least $\Omega(\epsilon^{-1})$ first-order queries, which proves the optimality of EG in first-order optimization.
\citet{monteiro2012iteration} proposed the Newton Proximal Extragradient (NPE) method 
as a second-order extension of EG, and they showed that NPE can provably find an $\epsilon$-solution to Problem (\ref{prob:main}) with $\gO(\epsilon^{-2/3})$ calls of the second-order oracle $(f(\vz), \nabla f(\vz), \nabla^2 f(\vz) )$.
The NPE algorithm has been simplified by many subsequent works \citep{adil2022optimal,huang2022approximation,lin2022explicit,bullins2022higher,liu2022regularized,lin2022perseus}, while they still require the $\gO(\epsilon^{-2/3})$ second-order oracle calls, which are speculated to be optimal under restricted conditions (see Appendix~\ref{apx:diss-lower}).



In this work, we show that it is possible to break the barrier $\Omega(\epsilon^{-2/3})$  
for second-order convex-concave minimax optimization with practical algorithms. 
We propose the Minimax-AIPE algorithm, which generalizes the second-order methods A-NPE \citep{monteiro2013accelerated} for minimization problems.
We prove our method requires at most $\tilde \gO\big(D_x^{6/7} D_y^{6/7} (\rho / \epsilon)^{4/7}\big)$ second-order oracle calls to find an $\epsilon$-solution for Problem (\ref{prob:main}) with the assumption of $\rho$-Lipschitz continuous Hessian, where $D_x$ and~$D_y$ are the diameters of $\gX$ and $\gY$, respectively. 
Our result significantly improves the existing upper bound of $\gO\left( \max\{D_x^2, D_y^2 \} (\rho /\epsilon)^{2/3} \right)$ achieved by the NPE method and its variants.

{Our proposed Minimax-AIPE is a triple-loop algorithm: the outer loop
runs $\tilde \gO \big( D_x^{6/7} (\gamma/ \epsilon)^{2/7} \big)$ iterations of the restarted Accelerated Inexact Proximal Exragradient (AIPE-restart, Algorithm \ref{alg:ANPE-restart}) in variable $\vx$; the middle loop runs $\tilde \gO \big( D_y^{6/7} (\gamma/ \epsilon)^{2/7} \big)$ iterations of AIPE-restart in variable $\vy$; and the inner loop implements a minimax proximal step
\begin{align*}
    \min_{\vx \in \gX} \max_{\vy \in \gY} f(\vx,\vy) + \frac{\gamma}{3} \Vert \vx - \bar \vx \Vert^3 - \frac{\gamma}{3} \Vert \vy - \bar \vy \Vert^3  
\end{align*}
via a linearly convergent method $\gM$, where $(\bar \vx, \bar \vy)$ is the proximal center and $\gamma$ is a hyper-parameter to be chosen. By applying different algorithms $\gM$ to implement the proximal step and tuning the hyper-parameter $\gamma$ accordingly, we naturally obtain the acceleration of these algorithms. For instance, choosing $\gM$ to be the NPE method \citep{monteiro2012iteration} achieves the  
$\tilde \gO\big(D_x^{6/7} D_y^{6/7} (\rho / \epsilon)^{4/7}\big)$ second-order oracle complexity as we claimed. Moreover,
our method is also compatible with the lazy Hessian technique to reduce the computational complexity of second-order methods. Choosing $\gM$ to be the recently proposed lazy version of NPE \citep{chen2025computationally} obtains a $\tilde \gO\big(m + m^{5/7} D_x^{6/7} D_y^{6/7} (\rho / \epsilon)^{4/7}  \big)$ upper bound of the number of iterations when reusing Hessians every $m$ iterations. The main results of this paper are shown in Table \ref{tab:main-result}.
}


\paragraph{Notations} We use $\Vert \, \cdot \, \Vert$ to denote the Euclidean norm for vectors and the spectral norm for matrices. We hide logarithmic factors in the notation $\tilde \gO(\,\cdot\,)$.
For a set $\gS$, we denote $\gI_{\gS}$ to its indicator function and $\partial \gI_{\gS}$ denotes the subgradient of the function.
We define $D:=\max\{D_{x},D_{y}\}$.
To simplify notations, we let $\vz = (\vx,\vy)$ and define the operator as 
$\mF(\vz) = \begin{bmatrix}
        \nabla_x f(\vx,\vy) \\
        - \nabla_y f(\vx,\vy)
\end{bmatrix}$.

\begin{table*}[t] 
\caption{We compare the theoretical results to find an $\epsilon$-solution for Problem (\ref{prob:main}) of representative second-order methods NPE \citep{monteiro2012iteration} and LEN \citep{chen2025computationally} before and after applying our proposed Minimax-AIPE acceleration framework.
}
\label{tab:main-result}
\centering
\centering
\begin{tabular}{c c c c c}
\hline 
Method   & Before Acc. & After Acc. & \# Hessians  & Reference \\ 
\hline \hline  \addlinespace 
NPE  &   $\gO(\epsilon^{-2/3}) $ & $\tilde \gO(\epsilon^{-4/7}) $ & every step &{Theorem \ref{thm:Minimax-AIPE-CC}}
\\  \addlinespace
LEN & $\gO(m+ m^{2/3} \epsilon^{-2/3})$ & $\tilde \gO(m+m^{5/7} \epsilon^{-4/7})$  & once per $m$ steps& {Theorem \ref{thm:Minimax-AIPE-CC-lazy}} \\ \addlinespace
\hline
    \end{tabular}
\end{table*}

\section{Related Works}
We review the existing results of different methods for convex-concave minimax optimization and the related acceleration techniques used in our methods.

\paragraph{First-Order Methods} The optimal oracle complexity of first-order algorithms to solve convex-concave minimax problems is well known. Various methods, including extragradient \citep{korpelevich1976extragradient}, dual extrapolation \citep{nesterov2007dual}, optimistic gradient descent ascent \citep{popov1980modification,mokhtari2020unified,mokhtari2020convergence}, achieve the upper bound $\gO( \max\{D_x^2,D_y^2 \} / \epsilon)$, which is known to be optimal when $D_x = D_y$ \citep{nemirovskij1983problem}. 
Inspired by the accelerated proximal point algorithm / Catalyst \citep{lin2018catalyst}, 
\citet{lin2020near} proposed the Minimax-APPA algorithm which achieves the upper bound of $\tilde \gO( D_x D_y / \epsilon)$, which fully matches the lower bound when $D_x \ne D_y$ \citep{ouyang2021lower}. Subsequent works proposed enhanced algorithms that further improve the logarithmic factors \citep{yang2020catalyst,kovalev2022first,carmon2022recapp} or give sharper rates under the refined $(L_x, L_{xy}, L_y)$-smoothness condition \citep{wang2020improved,jin2022sharper}.

\paragraph{Second-Order Methods} \citet{monteiro2012iteration} generalized the EG algorithm and proposed the Newton Proximal Extragradient (NPE) method with an $\gO \left(\max\{ D_x^2,D_y^2\}  \epsilon^{-2/3} \right)$ upper bound of second-order oracle calls for monotone variational inequalities. 
Subsequently, 
researchers have proposed different generalizations of the optimal first-order methods and have demonstrated the same theoretical guarantees \citep{bullins2022higher,huang2022approximation,adil2022optimal,lin2022perseus,jiang2024adaptive}. 
Very recently, \citet{chen2025computationally} proposed Lazy Extra Newton (LEN) to further reduce the computational complexity of NPE by using lazy Hessian updates \citep{doikov2023second}.
By analogy with the results of first-order methods, these works conjectured that they have achieved the optimal complexity in $\epsilon$ of the second-order algorithm when $D_x = D_y$.
Furthermore, some works \citep{adil2022optimal,lin2022perseus} have attempted to establish a lower bound of $\Omega(\epsilon^{-2/3})$ for this problem.
However, this paper refutes the possibility of the existence of such a lower bound. 
In Appendix \ref{apx:diss-lower}, we
discuss why the lower bounds established in \citet{adil2022optimal,lin2022perseus} are not applicable to ours. 


\paragraph{Higher-Order Methods} There are also works that generalized first-order and second-order methods to $p$th-order. Assuming a $p$-th order tensor step can be implemented, state-of-the-art $p$th-order methods achieve the iteration complexity of $\gO(\max\{D_x^2,D_y^2 \} \epsilon^{-2/(p+1)})$ \citep{bullins2022higher,nesterov2023high,lin2022perseus}. 
However, the $p$th-order methods for minimax problems remain ``conceptual'' in the case $p \ge 3$ as people do not know how to implement the $p$-th order tensor step in general. 
Therefore, this work only focuses on the implementable case $p=2$, although we expect our result to be generalizable for all $p$.

\paragraph{Acceleration} Our technique for acceleration is also closely related to the acceleration in convex optimization. 
\citet{monteiro2013accelerated} proposed the Accelerated Hybrid Proximal Extragradient (A-HPE) framework. 
The second-order instance of A-HPE, called Accelerated Newton Proximal Extragradient (A-NPE) achieves the $ \gO(\epsilon^{-2/7} \log \epsilon^{-1})$ iteration complexity. \citet{arjevani2019oracle} showed an $\Omega(\epsilon^{-2/7})$ lower bound for second-order convex optimization. Recently, two independent works \citep{carmon2022optimal,kovalev2022first} proposed novel enhancements of A-NPE to remove the $\gO(\log \epsilon^{-1})$ factor caused by line search for the extrapolation coefficient. 
\citet{chen2025computationally} proposed a more computationally efficient version of the search-free A-NPE \citep{carmon2022optimal} by incorporating the lazy Hessian technique \citep{doikov2023second}.
The A-HPE framework is a powerful tool for acceleration, which has also been used to accelerate quasi-Newton methods \citep{jiang2024accelerated}, tensor methods \citep{jiang2019optimal,bubeck2019near,gasnikov2019near,carmon2022optimal}, optimization with ball oracles \citep{carmon2020acceleration,carmon2021thinking,carmon2024whole,carmon2022distributionally}, and parallel optimization \citep{bubeck2019complexity,carmon2023resqueing}. 



\section{Preliminaries} \label{sec:pre}

In this section, we describe the problem setup and some auxiliary definitions in this paper.
We first introduce the standard convex-concave minimax problems and impose the following assumptions for Problem (\ref{prob:main}). 
We first assume that the function is convex-concave and the domain is bounded.

\begin{asm}[Convex-concavity] \label{asm:CC}
    We suppose that $f(\,\cdot\, \vy)$ is convex for any fixed $\vy$ and $f(\vx,\,\cdot\,)$ is concave for any fixed $\vx$.
\end{asm}
\begin{asm}[Bounded domain] \label{asm:D}
    We suppose that both the sets $\gX$ and $\gY$ are convex and compact with diameters $D_x:= \sup_{\vx, \vx' \in \gX} \Vert \vx - \vx' \Vert < + \infty$ and  $D_y:= \sup_{\vy, \vy' \in \gY} \Vert \vy - \vy' \Vert < + \infty$.
\end{asm}
We then assume that the function is Lipschitz and smooth. 
\begin{asm}[Lipschitzness] \label{asm:function-lip}
    We suppose $f(\vx,\vy)$ is $L$-Lipschitz continuous for some $L >0$:
    \begin{align*}
        \vert f(\vx,\vy) - f(\vx',\vy') \vert \le L \Vert \vx - \vy \Vert ,\quad \forall \vx,\vx' \in \gX, ~\vy,\vy' \in \gY.
    \end{align*}
\end{asm}

\begin{asm}[Gradient Lipschitzness] \label{asm:grad-lip}
    We suppose the gradient of $f(\vx,\vy)$ is $\ell$-Lipschitz continuous for some $\ell >0$:
    \begin{align*}
        \Vert \nabla f(\vx,\vy) - \nabla f(\vx',\vy') \Vert \le \ell \Vert \vx - \vy \Vert ,\quad \forall \vx,\vx' \in \gX, ~\vy,\vy' \in \gY.
    \end{align*}
\end{asm}

\begin{asm}[Hessian Lipschitzness] \label{asm:Hess-lip}
    We suppose the Hessian of $f(\vx,\vy)$ is $\rho$-Lipschitz continuous for some $\rho >0$:
    \begin{align*}
        \Vert \nabla^2 f(\vx,\vy) - \nabla^2 f(\vx',\vy') \Vert \le \rho \Vert \vx - \vy \Vert ,\quad \forall \vx,\vx' \in \gX, ~\vy,\vy' \in \gY.
    \end{align*}
\end{asm}
We note that previous second-order methods generally do not impose Assumption \ref{asm:function-lip} and \ref{asm:grad-lip}. 
Although our additionally assumptions look more restricted than existing works, it is important to note that they are mild and can be derived from the higher-order smoothness 
(Assumption \ref{asm:Hess-lip}) in a compact set. 
For this problem, we want to find an approximate solution defined as follows.
\begin{dfn} \label{dfn:eps-sol}
We say $(\hat \vx,\hat \vy) \in \gX \times \gY$ is an $\epsilon$-solution to Problem (\ref{prob:main}) if
\begin{align*}
    {\rm Gap}(\hat \vx, \hat \vy): = \max_{\vy \in \gY} f(\hat \vx, \vy) - \min_{\vx \in \gX} f(\vx, \hat \vy) \le \epsilon.
\end{align*}
\end{dfn}
When $\epsilon=0$, they are the exact saddle points of Problem~(\ref{prob:main}).
We focus on the complexity to find an $\epsilon$-solution. By following previous works, we measure the complexity by the number of cubic regularized Newton (CRN) oracles, which is formally defined as follows.

\paragraph{Second-Order Oracles} We introduce the second-order oracles. We then highlight several important and known lemmas for our analyses.
\begin{dfn} \label{dfn:cubic-VI}
A CRN oracle for Problem (\ref{prob:main}) takes the query point $\bar \vz \in \gZ$ and the regularization parameter $\gamma>0$ as inputs and returns $(\vz,\vu) = {\rm CRN}(\bar \vz, \gamma)$ satisfies:
\begin{align*}
    \langle \mF(\bar \vz) + \nabla \mF(\bar \vz)(\vz - \bar \vz) + \frac{\gamma}{2} \Vert \vz - \bar \vz \Vert (\vz - \bar \vz), \vz' - \vz  \rangle \ge 0, ~\forall \vz' \in \gZ; \\
    \vu =  -\left(\mF(\bar\vz) + \nabla \mF(\bar \vz) (\vz - \bar \vz) + \frac{\gamma}{2} \Vert \vz - \bar \vz \Vert 
    (\vz- \bar \vz)  \right) \in 
    \begin{bmatrix}
        \partial \gI_{\gX} (\vx) \\
        -\partial \gI_{\gY}(\vy)
    \end{bmatrix},
\end{align*}
where $\mF(\vz) = \begin{bmatrix}
        \nabla_x f(\vx,\vy) \\
        - \nabla_y f(\vx,\vy)
\end{bmatrix}$. 
Particularly, 
for minimization problem $\min_{\vz\in\gZ}f(\vz)$ we have
\begin{align*}
    \vz = \arg & \min_{\vz' \in \gZ}  \left\{ 
    f(\vz') + \langle \nabla f(\bar \vz), \vz' - \bar \vz \rangle + \frac{1}{2} \langle \nabla^2 f(\bar \vz) (\vz' - \bar \vz), \vz' - \bar \vz \rangle + \frac{\gamma}{6} \Vert \vz' - \bar \vz \Vert^3
    \right\}; \\
    \vu &= - ( \nabla f(\bar \vz) + \nabla^2 f(\bar \vz) (\vz - \bar \vz) + \frac{\gamma}{2} \Vert \vz - \bar \vz \Vert (\vz-  \bar \vz) ) \in \partial \gI_{\gZ}(\vz).
\end{align*}

\end{dfn}
It is well known that the above oracle can be implemented by transforming it into an equivalent auxiliary one-dimensional problem, which can be efficiently solved with line search procedure \citep[Section 4]{monteiro2012iteration}.

\paragraph{Making Gradient Small}  Our method also relies on tools for making gradients small \citep{allen2018make,foster2019complexity,yoon2021accelerated,chen2024near,lee2021fast,cai2022finite}. We recall the following fact for extragradient (EG) update:
\begin{align} \label{eq:update-eg}
\begin{split}
    \vz_{0.5} &= \arg \min_{\vz \in \gZ} \left\{ \langle \mF(\vz_0),  \vz \rangle + \frac{1}{2 \eta} \Vert \vz-  \vz_0 \Vert^2 \right\}, \\
    \vz_1 &= \arg \min_{\vz \in \gZ} \left\{ \langle \mF(\vz_{0.5}),  \vz \rangle + \frac{1}{2 \eta} \Vert \vz-  \vz_{0} \Vert^2 \right\}
\end{split}
\end{align}


\begin{lem}[{\citet[Lemma 12]{cai2022finite}, $T=1$}] \label{lem:eg-grad}
Under Assumption \ref{asm:CC} and \ref{asm:grad-lip},
the extragradient update (\ref{eq:update-eg}) with $\eta \in (0,1/\ell)$ on Problem (\ref{prob:main}) satisfies that
\begin{align*}
    \Vert \mF(\vz_1) + \vc_1 \Vert \le \frac{1 + \eta \ell + (\eta \ell)^2}{\eta \sqrt{1 - (\eta \ell)^2}} \Vert \vz_0 - \vz^* \Vert,
\end{align*}
where $ \vc_1 = (\vz_0 - \vz_1)/\eta - \mF(\vz_{0.5}) \in \partial \gI_\gZ (\vz_1) $.
\end{lem}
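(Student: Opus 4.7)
The plan is to combine the first-order optimality condition of the second projection with the standard ``one-step Lyapunov'' inequality for extragradient, then control two displacement terms separately. To start, I would note that by construction $\vc_1 = (\vz_0-\vz_1)/\eta - \mF(\vz_{0.5})$, so writing $\mF(\vz_1)+\vc_1 = (\mF(\vz_1)-\mF(\vz_{0.5})) + (\vz_0-\vz_1)/\eta$ and applying the triangle inequality together with Assumption \ref{asm:grad-lip} gives
\begin{equation*}
    \Vert \mF(\vz_1)+\vc_1\Vert \;\le\; \ell\,\Vert \vz_1-\vz_{0.5}\Vert \;+\; \tfrac{1}{\eta}\Vert \vz_1-\vz_0\Vert.
\end{equation*}
So the task reduces to bounding both $\Vert \vz_1-\vz_{0.5}\Vert$ and $\Vert \vz_1-\vz_0\Vert$ by multiples of $\Vert \vz_{0.5}-\vz_0\Vert$, and then bounding $\Vert \vz_{0.5}-\vz_0\Vert$ by $\Vert \vz_0-\vz^*\Vert$.

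For the first bound, I would use non-expansiveness of the projection in the second update: since $\vz_1$ and $\vz_{0.5}$ are the projections of $\vz_0-\eta\mF(\vz_{0.5})$ and $\vz_0-\eta\mF(\vz_0)$ onto $\gZ$ respectively, Lipschitzness of $\mF$ gives $\Vert \vz_1-\vz_{0.5}\Vert \le \eta\ell\,\Vert \vz_{0.5}-\vz_0\Vert$, and a further triangle inequality yields $\Vert \vz_1-\vz_0\Vert \le (1+\eta\ell)\,\Vert \vz_{0.5}-\vz_0\Vert$. Substituting both into the earlier display collapses the right-hand side to $\frac{1+\eta\ell+(\eta\ell)^2}{\eta}\,\Vert \vz_{0.5}-\vz_0\Vert$, exactly matching the numerator in the claimed bound.

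The remaining, and somewhat more delicate, step is to relate $\Vert \vz_{0.5}-\vz_0\Vert$ to $\Vert \vz_0-\vz^*\Vert$. Here I would run the classical one-step extragradient analysis: expand $\Vert \vz_1-\vz^*\Vert^2$ using the three-point identity at both projection steps and use monotonicity of $\mF$ at $\vz^*$ (from Assumption \ref{asm:CC}) to drop the $\langle \mF(\vz_{0.5}),\vz_{0.5}-\vz^*\rangle$ term. This leaves a cross term $2\eta\langle \mF(\vz_{0.5})-\mF(\vz_0),\vz_{0.5}-\vz_1\rangle$ which I would split by Young's inequality as $(\eta\ell)^2\Vert \vz_{0.5}-\vz_0\Vert^2+\Vert \vz_{0.5}-\vz_1\Vert^2$, so that the $\Vert \vz_{0.5}-\vz_1\Vert^2$ piece cancels and one obtains
\begin{equation*}
    \Vert \vz_1-\vz^*\Vert^2 \;\le\; \Vert \vz_0-\vz^*\Vert^2 \;-\; \bigl(1-(\eta\ell)^2\bigr)\Vert \vz_{0.5}-\vz_0\Vert^2.
\end{equation*}
Dropping $\Vert \vz_1-\vz^*\Vert^2\ge 0$ and rearranging produces $\Vert \vz_{0.5}-\vz_0\Vert \le \Vert \vz_0-\vz^*\Vert/\sqrt{1-(\eta\ell)^2}$, which supplies the $\sqrt{1-(\eta\ell)^2}$ in the denominator and finishes the argument.

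The main obstacle is really just the last step: one must be careful that the step size condition $\eta\in(0,1/\ell)$ makes $1-(\eta\ell)^2$ strictly positive, that the projection/subgradient inclusion is handled correctly (the three-point identity replaces the unconstrained inner-product manipulation), and that the monotonicity of $\mF$ at $\vz^*$ is used in the variational-inequality form $\langle \mF(\vz_{0.5}),\vz_{0.5}-\vz^*\rangle\ge 0$, which follows from Assumption \ref{asm:CC} and $\vz^*$ solving (\ref{prob:VI}). Everything else reduces to bookkeeping with the triangle inequality and the Lipschitz bound.
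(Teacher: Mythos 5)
Your derivation is correct and complete. The paper does not supply a proof of this lemma at all—it simply imports the statement by citing Lemma~12 of \citet{cai2022finite}—so there is no in-paper argument to compare against; your write-up is exactly the standard one-step extragradient analysis that underlies the cited result. The decomposition of $\mF(\vz_1)+\vc_1$ into $(\mF(\vz_1)-\mF(\vz_{0.5}))+(\vz_0-\vz_1)/\eta$, the non-expansiveness bound $\Vert\vz_1-\vz_{0.5}\Vert\le\eta\ell\Vert\vz_{0.5}-\vz_0\Vert$, and the descent inequality $\Vert\vz_1-\vz^*\Vert^2\le\Vert\vz_0-\vz^*\Vert^2-(1-(\eta\ell)^2)\Vert\vz_{0.5}-\vz_0\Vert^2$ (via two three-point identities, the Minty-form use of $\vz^*$ solving the VI, and Young's inequality to cancel $\Vert\vz_{0.5}-\vz_1\Vert^2$) combine to give exactly the stated constant $\frac{1+\eta\ell+(\eta\ell)^2}{\eta\sqrt{1-(\eta\ell)^2}}$, and the condition $\eta\in(0,1/\ell)$ is precisely what makes the denominator real and positive. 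No gaps.
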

For convenience, we also define the above EG step as an oracle.
\begin{dfn} \label{dfn:EG}
Given an input $\vz_0 \in \gZ$ and the operator $\mF$, we let ${\rm EG}(\vz_0, \mF, \eta)$ performs the update as Eq. (\ref{eq:update-eg}) and returns $(\vz_1,\vc_1)$ defined in Lemma \ref{lem:eg-grad}. 
\end{dfn}





\paragraph{Uniform Convexity} Finally, we introduce the definition of uniform convexity, which is an important property that will frequently be used in our analysis.
In this paper, we only considered the third-order uniform convexity, which is formally defined as follows.

\begin{dfn} \label{dfn:UC}
    A function $h(\vz): \gZ \rightarrow \sR$ is $\mu$-uniformly convex (of order 3) for some $\mu>0$ if 
    \begin{align*}
        h(\vz) \ge h(\vz') + \langle \nabla h(\vz'), \vz - \vz' \rangle + \frac{\mu}{3} \Vert \vz - \vz' \Vert^3, \quad \forall \vz, \vz' \in \gZ.
    \end{align*}
    We say $h(\vz)$ is $\mu$-uniformly concave if $-h(\vz)$ is $\mu$-uniformly concave.
\end{dfn}
The uniform convexity of order 3 is an important class, where second-order methods such as the cubic regularized Newton method~\citep{nesterov2006cubic} enjoy a linear convergence rate.
In the following, we recall some properties of uniformly convex functions, which is useful to derive the main results of this paper.

\begin{lem}[Section 4.2.2, \citet{nesterov2018lectures}] \label{lem:UC-grad-dominant}
Let $h(\vz): \gZ \rightarrow \sR$ be a $\mu$-uniformly convex function and let $\vz^* = \arg \min_{\vz \in \gZ} h(\vz)$. Then for any $\vz \in \gZ$, we have 
\begin{align*}
        \frac{2}{3 \sqrt{\mu}} \Vert \nabla h(\vz) \Vert^{\frac{3}{2}} &\ge h(\vz) - h(\vz^*) \ge \frac{\mu}{3} \Vert \vz - \vz^* \Vert^3.
\end{align*}
\end{lem}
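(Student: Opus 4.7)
The right-hand bound follows almost immediately from the definition of uniform convexity. The plan is to apply Definition~\ref{dfn:UC} with the point $\vz'$ set to the constrained minimizer $\vz^*$, giving
\begin{align*}
h(\vz) \ge h(\vz^*) + \langle \nabla h(\vz^*), \vz - \vz^* \rangle + \frac{\mu}{3}\|\vz - \vz^*\|^3.
\end{align*}
Because $\vz^* = \arg\min_{\vz \in \gZ} h(\vz)$, the first-order optimality condition on the convex set $\gZ$ yields $\langle \nabla h(\vz^*), \vz - \vz^* \rangle \ge 0$ for every $\vz \in \gZ$, and rearranging produces the claimed cubic lower bound on $h(\vz) - h(\vz^*)$.

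The left-hand (gradient-dominance) inequality requires the ``dual'' application of the definition. I would swap the roles of the two points, i.e.\ apply Definition~\ref{dfn:UC} with base point $\vz$ and test point $\vz^*$:
\begin{align*}
h(\vz^*) \ge h(\vz) + \langle \nabla h(\vz), \vz^* - \vz \rangle + \frac{\mu}{3}\|\vz - \vz^*\|^3,
\end{align*}
which rearranges to $h(\vz) - h(\vz^*) \le \langle \nabla h(\vz), \vz - \vz^*\rangle - \frac{\mu}{3}\|\vz-\vz^*\|^3$. Applying Cauchy--Schwarz to the inner product and writing $r := \|\vz - \vz^*\|$ and $g := \|\nabla h(\vz)\|$ gives the scalar estimate $h(\vz) - h(\vz^*) \le g r - \frac{\mu}{3} r^3$.

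The last step is to note that the right-hand side is a concave function of $r \ge 0$ whose unconstrained maximum over $r$ dominates its value at the particular $r = \|\vz - \vz^*\|$. Differentiating $g r - \frac{\mu}{3} r^3$ and setting the derivative to zero gives $r_\star = \sqrt{g/\mu}$, and substituting back yields the maximum value $\frac{2}{3\sqrt{\mu}}\, g^{3/2}$. Combining these steps produces $h(\vz) - h(\vz^*) \le \frac{2}{3\sqrt{\mu}} \|\nabla h(\vz)\|^{3/2}$, as required.

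The argument is essentially routine once the right direction of the defining inequality is chosen, so the ``main obstacle'' is really just a bookkeeping one: ensuring that the first-order optimality condition at $\vz^*$ is used correctly in its variational form (so that the analysis remains valid on the constrained set $\gZ$ rather than only in the unconstrained case), and that the one-dimensional maximization over $r$ is applied in the direction that gives an upper bound on $h(\vz) - h(\vz^*)$. No smoothness or Lipschitzness of $\nabla h$ is needed — the result uses only the order-3 uniform convexity from Definition~\ref{dfn:UC}.
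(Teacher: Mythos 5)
Your proof is correct and is the standard textbook argument — essentially the one in \citet{nesterov2018lectures}, Section~4.2.2, which the paper cites without reproducing. You correctly use the variational form of first-order optimality on the convex set $\gZ$ for the lower bound, and the Cauchy--Schwarz plus one-dimensional maximization (maximum value $\frac{2}{3\sqrt{\mu}}g^{3/2}$ at $r_\star=\sqrt{g/\mu}$) for the upper bound; nothing is missing.
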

An illustrating example is the cubic function $d(\vz) = (1/3) \Vert \vz \Vert^3$, which has the following properties. 
\begin{lem}[{Lemma 4.2.3 and Lemma 4.2.4, \citet{nesterov2018lectures}}] \label{lem:cubic-func}
    Let $d(\vz) = (1/3) \Vert \vz \Vert^3$ be the cubic function. We have that $d(\vz)$ is $(1/2)$-uniformly convex and has $2$-Lipschitz continuous Hessians.
\end{lem}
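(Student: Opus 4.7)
The plan is to compute the first three derivatives of $d$ in closed form and then verify each of the two claims by direct estimation. Away from the origin,
\begin{align*}
\nabla d(\vz) = \|\vz\|\vz, \qquad \nabla^2 d(\vz) = \|\vz\|\mI + \frac{\vz\vz^\top}{\|\vz\|},
\end{align*}
and both expressions extend continuously to $\vz = 0$ (with $\nabla^2 d(0) = \mathbf{0}$). In particular, $\nabla^2 d(\vw)\succeq \|\vw\|\mI$, a lower bound that will serve the uniform convexity step below.

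For the $2$-Lipschitz Hessian bound, I would differentiate once more to obtain
\begin{align*}
\nabla^3 d(\vz)[\vh] = \frac{\langle\vz,\vh\rangle}{\|\vz\|}\mI + \frac{\vh\vz^\top + \vz\vh^\top}{\|\vz\|} - \frac{\langle\vz,\vh\rangle\,\vz\vz^\top}{\|\vz\|^3},
\end{align*}
and bound its operator norm by $2\|\vh\|$. Testing against a unit vector $\vv$, decomposing $\vv$ and $\vh$ along $\vz/\|\vz\|$ and an orthogonal direction in the plane they span, and applying Cauchy--Schwarz reduces the target inequality to $b(1+a^2) + 2a\sqrt{(1-a^2)(1-b^2)} \le 2$ for $a,b\in[0,1]$. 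Substituting $a = \cos\alpha$, $b = \cos\beta$, maximizing over $\beta$ via Cauchy--Schwarz, and setting $t = \cos^2\alpha$ reduces the remaining task to verifying $-3t^2 + 6t + 1 \le 4$ on $[0,1]$, which is immediate. Integrating $\nabla^3 d$ along the segment $\vz' + s(\vz - \vz')$ then yields $\|\nabla^2 d(\vz) - \nabla^2 d(\vz')\|\le 2\|\vz-\vz'\|$.

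For uniform convexity of order $3$ with constant $1/2$, I would apply the Taylor remainder identity
\begin{align*}
d(\vz) - d(\vz') - \langle\nabla d(\vz'),\vh\rangle = \int_0^1 (1-t)\,\langle \nabla^2 d(\vz' + t\vh)\vh,\vh\rangle\,dt,
\end{align*}
with $\vh = \vz - \vz'$, and substitute the full Hessian so the integrand becomes $\|\vw(t)\|\|\vh\|^2 + \langle\vw(t),\vh\rangle^2/\|\vw(t)\|$ where $\vw(t) = \vz' + t\vh$. By rotational invariance of $d$, I can assume $\vz,\vz'$ lie in a common two-dimensional plane; introducing $\lambda = \langle\vz',\vh\rangle/\|\vh\|$ and $\nu = \sqrt{\|\vz'\|^2 - \lambda^2}$, and changing variables $s = \lambda + t\|\vh\|$, the claim reduces to
\begin{align*}
\int_\lambda^{\lambda+\|\vh\|}(\lambda+\|\vh\|-s)\,\frac{2s^2+\nu^2}{\sqrt{s^2+\nu^2}}\,ds \;\ge\; \frac{\|\vh\|^3}{6}.
\end{align*}
A short calculation shows $\partial/\partial\nu$ of the integrand equals $\nu^3/(s^2+\nu^2)^{3/2}\ge 0$, so the left-hand side is monotone increasing in $\nu$; it suffices to treat $\nu = 0$, where the integral becomes elementary and can be minimized in $\lambda$ by a quadratic optimization.

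The main obstacle is obtaining the sharp constant in the uniform convexity claim: the crude estimate $\nabla^2 d(\vw)\succeq\|\vw\|\mI$ alone already loses a constant factor, so the rank-one correction $\vw\vw^\top/\|\vw\|$ must be retained throughout the integration. Both statements also appear as Lemmas 4.2.3 and 4.2.4 of \citet{nesterov2018lectures} and could alternatively be invoked by direct citation.
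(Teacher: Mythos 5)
The paper does not prove this lemma at all: it is imported verbatim from \citet{nesterov2018lectures} (Lemmas 4.2.3 and 4.2.4), exactly as the bracketed attribution in the statement indicates, and no proof appears anywhere in the appendices. Your proposal is therefore a from-scratch verification of the cited result rather than a reconstruction of a missing paper proof, and the comparison point is Nesterov's argument rather than anything internal to this manuscript.

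Your computations check out. The gradient, Hessian, and third-derivative formulas are correct, the reduction of the operator-norm bound to $b(1+a^2) + 2a\sqrt{(1-a^2)(1-b^2)} \le 2$ is the right optimization over the cosine $c=\langle \vh,\vv\rangle$ given $a=\langle\vz,\vv\rangle$ and $b=\langle\vz,\vh\rangle$ (with $\|\vz\|=\|\vh\|=\|\vv\|=1$, exploiting degree-$0$ homogeneity), and the trigonometric substitution reduces to $1+6t-3t^2\le 4$, i.e.\ $-3(t-1)^2\le 0$, which holds with equality at $t=1$ so that $2$ is the sharp Lipschitz constant. For the uniform-convexity claim, the change of variables $s = \lambda + t\|\vh\|$ with $\nu^2 = \|\vz'\|^2 - \lambda^2$ and the monotonicity $\partial_\nu\bigl[(2s^2+\nu^2)/\sqrt{s^2+\nu^2}\bigr] = \nu^3/(s^2+\nu^2)^{3/2}\ge 0$ correctly collapse the problem to the one-dimensional $\nu=0$ case, where the minimum over $\lambda$ occurs at $\lambda = -(1-1/\sqrt2)\|\vh\|$ and evaluates to $\tfrac{2-\sqrt2}{3}\|\vh\|^3 \approx 0.195\|\vh\|^3 > \tfrac{1}{6}\|\vh\|^3$. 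Your remark that the rank-one term $\vw\vw^\top/\|\vw\|$ cannot be dropped is also accurate: the crude bound $\nabla^2 d(\vw)\succeq\|\vw\|\mI$ alone yields only $\approx 0.098\|\vh\|^3$ at the minimizing $\lambda$, which is below the required $\|\vh\|^3/6$. Incidentally, your calculation shows the sharp uniform-convexity modulus is $2-\sqrt2$, so Nesterov's $1/2$ is correct but not tight. Two cosmetic cleanups worth making: the $\nu$-derivative you quote is of the factor $(2s^2+\nu^2)/\sqrt{s^2+\nu^2}$ alone and should carry the nonnegative prefactor $(\lambda+\|\vh\|-s)$ when speaking of the full integrand; and for the Hessian-Lipschitz step one should briefly note that $\nabla^3 d$ is degree-$0$ homogeneous and hence uniformly bounded (though discontinuous at the origin), so the segment integral is legitimate even when $0$ lies on the segment, by continuity of $\nabla^2 d$ and a limiting argument.
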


\section{The Subroutine: Accelerated Inexact Proximal Extragradient} \label{sec:AIPE}

Before presenting our main algorithm, we first introduce the Accelerated Inexact Proximal Extragradient (AIPE) method for minimizing a convex function $h(\vz): \gZ \rightarrow \sR$.
It will be an important component in our main algorithm later. We present its procedure in Algorithm \ref{alg:ANPE}. It 
generalizes the recently proposed search-free A-HPE method \citep{carmon2022optimal} by allowing the following inexact proximal oracles.


\begin{algorithm*}[t]  
\caption{AIPE$(\vz_0, T, \gamma, \delta) $
}\label{alg:ANPE}
\begin{algorithmic}[1] 
\renewcommand{\algorithmicrequire}{ \textbf{Input:}}
\STATE $\vv_0 = \vz_0$, $\bar \vz_0 = \vz_0$, $A_0 = 0$ \\
\STATE $ \tilde \vz_1, \vu_1 = {\color{blue}{\rm iProx}_h(\bar \vz_0, \gamma, \delta)}$ \label{Line:iprox-ini} \\
\STATE $\lambda_1' = \lambda_1 = \gamma \Vert \tilde \vz_1 - \bar \vz_0 \Vert$\\
\FOR{$t = 0,\cdots, T-1$}  
\STATE \quad 
Solve $a'_{t+1} > 0 $ from $ A_t +  a_{t+1}' = {\color{blue}2} \lambda_{t+1}' \left(a_{t+1}'\right)^2$ \label{Line:eq-a}\\
\STATE \quad $A_{t+1}' = A_t + a_{t+1}'$ \\
\STATE \quad $\bar \vz_t = \frac{A_t}{A_{t+1}'} \vz_t + \frac{a_{t+1}'}{A_{t+1}'} \vv_t $ \\
\STATE \quad \textbf{if} $t > 0 $ \textbf{then} 
\STATE \quad  \quad $\tilde \vz_{t+1}, \vu_{t+1}= {\color{blue}{\rm iProx}_h (\bar \vz_t, \gamma, \delta)} $ \label{Line:iprox} \\
\STATE \quad \quad $\lambda_{t+1} = \gamma \Vert \tilde \vz_{t+1} - \bar \vz_t \Vert$ \\
\STATE \quad \textbf{end if} \\
\STATE \quad  \textbf{if} $\lambda_{t+1} \le \lambda_{t+1}'$ \textbf{then} \\
\STATE \quad \quad $a_{t+1} = a_{t+1}' $, $A_{t+1} = A_{t+1}'$ \\
\STATE \quad \quad $\vz_{t+1} = \tilde \vz_{t+1} $ \\
\STATE \quad \quad $\lambda_{t+2}' = \frac{1}{2} \lambda_{t+1}' $\\
\STATE \quad \textbf{else} \\
\STATE \quad \quad $\gamma_{t+1} = \frac{\lambda_{t+1}'}{\lambda_{t+1}}$ \\
\STATE \quad \quad $a_{t+1} = \gamma_{t+1} a_{t+1}'$, $A_{t+1} = A_t + a_{t+1}$ \\
\STATE \quad \quad $ \vz_{t+1} = \frac{(1- \gamma_{t+1}) A_t}{A_{t+1}} \vz_t + \frac{\gamma_{t+1} A_{t+1}'}{A_{t+1}} \tilde \vz_{t+1}$ \\
\STATE \quad \quad $\lambda_{t+2}' = 2 \lambda_{t+1}'$ \\
\STATE \quad \textbf{end if} \\
\STATE \quad {\color{blue}Get $\vg_{t+1}$ such that $\Vert \vg_{t+1} - \nabla h(\tilde \vz_{t+1}) \Vert \le \delta$} \label{Line:igrad} \\
\STATE \quad $\vv_{t+1} = \arg \min_{\vv \in \gZ} \left\{ \langle \vg_{t+1} + \vu_{t+1}, \vv \rangle + \frac{1}{2 a_{t+1}} \Vert \vv - \vv_t \Vert^2 \rangle  \right\}$ \\
\ENDFOR \\
\STATE {\color{blue} $\vz^{\rm out} = \arg \min_{0 \le t \le T}\left\{ \hat h(\vz); \vz \in \{\vz_t, \tilde \vz_t \}\right\}$, where $\vert \hat h(\vz) - h(\vz) \vert \le \delta$.} \label{Line:ifunc}
\RETURN $\vz^{\rm out}$ 
\end{algorithmic}
\end{algorithm*}

\begin{algorithm*}[t]  
\caption{AIPE-restart$(\vz_0, T, \gamma, \delta, S) $
}\label{alg:ANPE-restart}
\begin{algorithmic}[1] 
\renewcommand{\algorithmicrequire}{ \textbf{Input:}}
\STATE $\vz^{(0)} = \vz_0$ \\
\FOR{$s=0,\cdots,S-1$}
\STATE \quad $\vz^{(s+1)} =\text{AIPE}(\vz^{(s)}, T, \gamma, \delta) $ \\
\ENDFOR \\
\RETURN $\vz^{(S)}$
\end{algorithmic}
\end{algorithm*}

\begin{dfn}[Inexact Second-Order Proximal Oracle] \label{dfn:inexact-MS-oracle}
An oracle is called a $(\delta,\gamma)$-(second-order)-proximal oracle
for function $h: \sR^d \rightarrow \sR$ if for every $ \bar \vz \in \sR^d$ the points $(\vz, \vu) = {\rm iProx}_h(\bar \vz, \gamma)$ with $\vz \in \gZ$ and $\vu \in \partial \gI_{\gZ}(\vz)$ satisfy
\begin{align*}
    \Vert  \nabla h(\vz) + \vu + \lambda(\vz - \bar \vz) \Vert \le \frac{\lambda}{2} \Vert \vz-  \bar \vz \Vert+ \delta, \quad \lambda = \gamma \Vert \vz - \bar \vz \Vert. 
\end{align*}
\end{dfn}

When $\delta=0$ the above oracle reduces to the oracle used in previous works \citep{monteiro2013accelerated,carmon2022optimal,kovalev2022first,nesterov2021inexact,nesterov2023inexact}, which can be implemented by a CRN oracle.

\begin{lem}[{\citet[Section 3.1]{carmon2022optimal}}] \label{lem:CRN-is-MS}
Assume $h(\vz): \gZ \rightarrow \sR$ has $\rho$-Lipschitz continuous Hessians. The CRN oracle
${\rm CRN}(\,\cdot\,, 2 \rho)$ implements an $( 0, \rho)$-proximal oracle.
\end{lem}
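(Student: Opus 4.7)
The plan is to show directly that the output $(\vz, \vu) = {\rm CRN}(\bar \vz, 2\rho)$ satisfies the inexact proximal inequality in Definition \ref{dfn:inexact-MS-oracle} with $\gamma = \rho$ and $\delta = 0$. The key observation is that the first-order optimality structure of the cubic-regularized subproblem, combined with the Hessian Lipschitz error bound, collapses almost perfectly into the proximal condition.

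First I would unfold the definition of CRN with parameter $2\rho$ (using the minimization form in Definition \ref{dfn:cubic-VI}), which yields
\begin{align*}
    \vu = - \Bigl( \nabla h(\bar \vz) + \nabla^2 h(\bar \vz)(\vz - \bar \vz) + \rho \Vert \vz - \bar \vz \Vert (\vz - \bar \vz) \Bigr) \in \partial \gI_{\gZ}(\vz).
\end{align*}
Setting $\lambda = \rho \Vert \vz - \bar \vz \Vert$ as required by the iProx definition (with $\gamma = \rho$), I would substitute this expression for $\vu$ into the quantity $\nabla h(\vz) + \vu + \lambda (\vz - \bar \vz)$. The two terms involving $\rho \Vert \vz - \bar \vz \Vert (\vz - \bar \vz)$ cancel exactly, leaving the familiar Taylor remainder
\begin{align*}
    \nabla h(\vz) + \vu + \lambda (\vz - \bar \vz) = \nabla h(\vz) - \nabla h(\bar \vz) - \nabla^2 h(\bar \vz)(\vz - \bar \vz).
\end{align*}

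Next I would invoke the standard consequence of $\rho$-Lipschitz Hessians, namely $\Vert \nabla h(\vz) - \nabla h(\bar \vz) - \nabla^2 h(\bar \vz)(\vz - \bar \vz) \Vert \le \frac{\rho}{2} \Vert \vz - \bar \vz \Vert^2$. Rewriting the right-hand side as $\frac{\lambda}{2} \Vert \vz - \bar \vz \Vert$ completes the verification with $\delta = 0$, exactly matching the $(0, \rho)$-proximal condition.

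I do not expect a significant obstacle here: the lemma is essentially a bookkeeping statement tying together the cubic regularization parameter and the Hessian Lipschitz constant, and is already folded into the normalization conventions of Definitions \ref{dfn:cubic-VI} and \ref{dfn:inexact-MS-oracle}. The only subtlety worth flagging is the factor of two in ${\rm CRN}(\cdot, 2\rho)$ versus the $\lambda/2$ on the right-hand side of the iProx inequality, which is exactly what makes the two halves of the cubic term cancel and produces a tight (rather than slack) bound.
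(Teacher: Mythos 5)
Your proof is correct and follows essentially the same argument as the paper: unfold the CRN first-order optimality condition with $\gamma = 2\rho$, observe the cubic-regularization term cancels against $\lambda(\vz-\bar\vz)$ leaving the Taylor remainder, and bound it by $\frac{\rho}{2}\Vert\vz-\bar\vz\Vert^2 = \frac{\lambda}{2}\Vert\vz-\bar\vz\Vert$. The only cosmetic difference is that the paper phrases the computation in terms of the general operator $\mF$ and $\nabla\mF$ so it covers both the minimization and minimax readings of the CRN oracle at once, whereas you specialize to the minimization form $\nabla h, \nabla^2 h$; the algebra is identical.
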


The main modification we made to the algorithm by \citet{carmon2022optimal}  are marked in blue in Algorithm \ref{alg:ANPE}, where Line \ref{Line:iprox} replaces the exact proximal oracle with an inexact one as per Definition~\ref{dfn:inexact-MS-oracle}, and Line \ref{Line:igrad} and Line \ref{Line:ifunc} also allow the following inexact zeroth-order and first-order oracles. 

\begin{dfn}
    We call $\hat h(\vz)$ a $\delta$-zeroth-order oracle of function $h:\gZ \rightarrow \sR$ if $\vert \hat h(\vz)- h(\vz)  \vert \le \delta$.
\end{dfn}

\begin{dfn}
We call $\vg(\vz)$ a $\delta$-first-order oracle of function $h:\gZ \rightarrow \sR$ if $\Vert \vg(\vz)
 - \nabla h(\vz) \Vert \le \delta$.
\end{dfn}

Since the accelerated algorithm is easily affected by errors, it causes challenges in analyzing AIPE.  
In our proof, we restrict the iterations in $t \le T_{\epsilon}$ to avoid some ill-conditioned case after the algorithm has reached an $\epsilon$-solution, where $T_{\epsilon} = \arg \min_{0 \le t \le T} \{ h(\vz); \vz \in \{\vz_t,  \tilde \vz_t \} \}$.  Then we can follow the same steps as \citep[Theorem 1]{carmon2022optimal} to show that it is sufficient to let $\delta \lesssim {\epsilon}/{A_{T_{\epsilon}}}$ to recover the convergence rate as A-HPE \citep{monteiro2013accelerated}. Finally, we provide an upper bound of $A_{T_{\epsilon}}$ to give an explicit parameter setting of $\delta$.
The formal proof is deferred to Appendix~\ref{apx:proof-pre}.
We show the AIPE (Algorithm \ref{alg:ANPE}) can find an $\epsilon$-solution to a convex function in $\gO( (\gamma / \epsilon)^{2/7})$ iterations. Then by incorporating the restart scheme we know Algorithm \ref{alg:ANPE-restart} has a linear convergence rate of $\gO( (\gamma / \mu)^{2/7} \log \epsilon^{-1})$ for minimizing a $\mu$-uniformly convex function, as stated below.

\begin{thm}[AIPE-restart] \label{thm:ANPE-restart}
Assume $h(\vz): \gZ \rightarrow \sR$ is $\mu$-uniformly convex. If $\delta \le {\mu \epsilon^4}/{(144 D^2)}$, then running Algorithm~\ref{alg:ANPE} with
$T = \gO\left( (\gamma/ \mu)^{2/7} \right)$
and $S = \gO(\log (D/\epsilon))$ 
returns $\vz^{(S)}$ such that $\Vert \vz^{(S)} - \vz^* \Vert \le \epsilon$, where $\vz^* = \arg \min_{\vz \in \gZ} h(\vz)$, $D = \sup_{\vz, \vz' \in \gZ}\Vert \vz- \vz' \Vert$.
\end{thm}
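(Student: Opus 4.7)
The plan is to chain three standard ingredients: (i) a sublinear rate for a single call of Algorithm~\ref{alg:ANPE} on a convex objective, (ii) the distance-to-function-gap bound coming from $\mu$-uniform convexity of order~$3$, and (iii) a geometric-contraction argument via restarts. The first ingredient is the counterpart of \citep[Theorem 1]{carmon2022optimal} extended to the inexact oracles in Definition~\ref{dfn:inexact-MS-oracle}, and is the content of Appendix~\ref{apx:proof-pre}. The analysis there restricts iterations to $t \le T_\epsilon$ (to avoid ill-conditioning after an $\epsilon$-solution has been reached) and, under $\delta = \gO(\epsilon'/A_{T_{\epsilon'}})$, yields
\begin{align*}
    h(\vz^{\rm out}) - h(\vz^*) = \gO\!\left(\frac{\gamma R^3}{T^{7/2}}\right) + \gO(\delta), \qquad R := \Vert \vz_0 - \vz^*\Vert,
\end{align*}
where the additive $\gO(\delta)$ accounts for the inexact function evaluations used in Line~\ref{Line:ifunc}.

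With this sublinear rate in hand, I would invoke Lemma~\ref{lem:UC-grad-dominant} to pass from the function gap to a distance: under $\mu$-uniform convexity, $\Vert \vz^{\rm out} - \vz^*\Vert^3 \le (3/\mu)(h(\vz^{\rm out}) - h(\vz^*))$. Combining this with the previous display and choosing $T = \gO((\gamma/\mu)^{2/7})$ large enough so that the main term on the right-hand side is at most $\mu R^3/16$, while arranging that the $\delta$ contribution is dominated in the same way, gives $\Vert \vz^{\rm out} - \vz^*\Vert \le R/2$. Iterating this halving across the outer loop of Algorithm~\ref{alg:ANPE-restart} produces the geometric sequence $R_s := \Vert \vz^{(s)} - \vz^*\Vert \le D/2^s$, so $S = \lceil \log_2(D/\epsilon)\rceil$ restarts suffice to achieve $\Vert \vz^{(S)} - \vz^*\Vert \le \epsilon$, matching the claimed restart count.

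The main obstacle I anticipate is verifying that a single, stage-independent choice of $\delta$ meets the inexactness requirement at every restart. At stage~$s$, halving demands both the inner-loop condition $\delta = \gO(\epsilon_s'/A_{T_{\epsilon_s'}})$ with target accuracy $\epsilon_s'$ of order $\mu R_s^3$, and the additive zeroth-order error $\gO(\delta)$ in Line~\ref{Line:ifunc} to sit well below $\mu R_s^3$. Using the explicit bound on $A_{T_{\epsilon_s'}}$ (the source of the $D^2$ in the hypothesis, after bounding $R_s \le D$ uniformly), the binding stage is the last one, where $R_s$ has already shrunk to order $\epsilon$. Both requirements then collapse to a bound of the form $\delta = \gO(\mu\epsilon^4/D^2)$, which is precisely the hypothesis $\delta \le \mu\epsilon^4/(144 D^2)$. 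Under this choice the induction across restarts closes and Algorithm~\ref{alg:ANPE-restart} returns the desired $\vz^{(S)}$.
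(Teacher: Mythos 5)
Your proposal is correct and follows essentially the same route as the paper: the paper likewise establishes a per-call guarantee (Theorem~\ref{thm:ANPE}, proved in Appendix~B) combining the $\gO(\gamma R^3/T^{7/2})$ rate with an additive $\gO(\delta)$ term and the explicit bound $A_{T_\epsilon} \lesssim D^2/(\mu\epsilon^3)$, then converts function-gap control to distance control via Lemma~\ref{lem:UC-grad-dominant}, and closes with a geometric halving argument across the $S = \lceil \log_2(D/\epsilon)\rceil$ restarts. You also correctly identify the final (smallest-radius) stage as the binding one for the uniform choice of $\delta$, which is exactly how the $\mu\epsilon^4/(144D^2)$ threshold arises in the paper.
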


\begin{remark}
The inexact condition $\delta \lesssim {\mu \epsilon^4}/{D^2}$ in Theorem \ref{thm:ANPE-restart} can possibly be refined to $\delta \lesssim {\mu \epsilon^4}/{d_0}$ for $d_0= \Vert \vz_0 - \vz^* \Vert$ by making some additional efforts such as \citep[Lemma 17]{bubeck2019complexity} to show that all iterations of the algorithm lie in a bounded set $\{ \vz\in \sR^d: \Vert \vz - \vz^* \Vert \le \beta \Vert \vz_0 - \vz^*\Vert \}$ for some constant $\beta>0$. But our inexact condition is enough to show the main result in this paper.
\end{remark}

As the $(0,\rho)$-proximal oracle can be implemented by the CRN oracle for a function with $\rho$-Lipschitz continuous Hessians, we can easily obtain the convergence result of the search-free ANPE method \citep{carmon2022optimal} under the restart scheme.

\begin{cor}[ANPE-restart] \label{cor:ANPE}
Assume $h(\vz): \gZ \rightarrow \sR$ is $\mu$-uniformly convex and has $\rho$-Lipschitz continuous Hessians. 
There exists a second-order algorithm that 
returns a point $\vz$ such that $\Vert \vz - \vz^* \Vert \le \epsilon$ in $\gO\left( (\rho/\mu)^{2/7} \log (D/ \epsilon)  \right)$ CRN oracle calls.
\end{cor}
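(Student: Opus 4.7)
The plan is to derive Corollary \ref{cor:ANPE} as essentially a direct specialization of Theorem \ref{thm:ANPE-restart}, where the generic inexact proximal oracle is replaced by the exact one provided by the CRN step. The key observation is that under $\rho$-Lipschitz Hessians, Lemma \ref{lem:CRN-is-MS} guarantees that ${\rm CRN}(\,\cdot\,, 2\rho)$ realizes a $(0,\rho)$-proximal oracle, which trivially satisfies the $(\delta,\rho)$-proximal oracle requirement of Definition \ref{dfn:inexact-MS-oracle} for every $\delta \ge 0$.

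First I would instantiate Algorithm \ref{alg:ANPE-restart} with $\gamma = \rho$, $\delta = 0$, and use ${\rm CRN}(\,\cdot\,, 2\rho)$ in place of each ${\rm iProx}_h(\bar\vz_t, \gamma, \delta)$ call (Lines \ref{Line:iprox-ini} and \ref{Line:iprox}). Since $h$ is explicitly given, the $\delta$-zeroth-order oracle in Line \ref{Line:ifunc} and the $\delta$-first-order oracle in Line \ref{Line:igrad} can be implemented exactly, i.e.\ with $\delta = 0$, so the inexactness condition $\delta \le \mu\epsilon^4/(144 D^2)$ of Theorem \ref{thm:ANPE-restart} is automatically satisfied. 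Applying Theorem \ref{thm:ANPE-restart} with $\gamma = \rho$ yields $T = \gO((\rho/\mu)^{2/7})$ inner iterations per AIPE call and $S = \gO(\log(D/\epsilon))$ restarts, producing an output $\vz^{(S)}$ with $\Vert \vz^{(S)} - \vz^* \Vert \le \epsilon$.

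To convert this into a bound on CRN oracle calls, I would observe that every iteration of AIPE invokes the proximal oracle exactly once (via the ${\rm iProx}_h$ step), so the total number of CRN queries across all $S$ restarts is $T \cdot S = \gO((\rho/\mu)^{2/7} \log(D/\epsilon))$. The zeroth- and first-order oracles in Lines \ref{Line:igrad} and \ref{Line:ifunc} do not consume CRN oracles, so they do not affect the accounting.

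There is no substantive obstacle here, since the corollary is essentially a restatement of Theorem \ref{thm:ANPE-restart} in the specialized setting where the inexact proximal oracle can be implemented exactly by a CRN step; the only thing to verify is that the inexactness tolerances on the auxiliary oracles are vacuously met. If anything, the minor care point is to make sure that the $\gamma$ chosen for the AIPE subroutine agrees with the $2\rho$ regularization of the CRN oracle in Lemma \ref{lem:CRN-is-MS}, which fixes the dependence on $\rho$ in the final bound.
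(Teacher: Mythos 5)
Your proposal is correct and follows the same route as the paper: combine Lemma \ref{lem:CRN-is-MS} (showing ${\rm CRN}(\cdot, 2\rho)$ gives a $(0,\rho)$-proximal oracle) with Theorem \ref{thm:ANPE-restart} at $\gamma = \rho$ and $\delta = 0$, then count one CRN call per AIPE iteration across $S = \gO(\log(D/\epsilon))$ restarts. You also correctly flag the one small subtlety, namely that the CRN regularization parameter $2\rho$ corresponds to the proximal-oracle parameter $\gamma = \rho$.
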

        
\section{Main Result: Achieving the $\tilde \gO(\epsilon^{-4/7})$ Upper Bound} \label{sec:main}

In this section, we introduce our acceleration framework and prove that it can find an $\epsilon$-solution to a convex-concave minimax problems in $\tilde \gO(\epsilon^{-4/7})$ second-order oracle calls. {In Section \ref{subsec:reduction}, we first show that solving the convex-concave minimax problem can be reduced to solving a $\mu_x$-uniformly-convex-$\mu_y$-uniformly-concave minimax problems by adding cubic regularization. Then we show that Algorithm~\ref{alg:Minimax-AIPE} and \ref{alg:Minimax-AIPE-mid} together solve the problem fast if an inexact proximal minimax oracle (\ref{prob:prox-x-y}) is available. We then describe the implementation of the oracle with existing algorithms such as NPE \citep{monteiro2012iteration} and LEN \citep{chen2025computationally} in Algorithm~\ref{alg:Minimax-AIPE-inner}.  } 



\subsection{Reducing to a Uniformly-Convex-Uniformly-Concave Problem} \label{subsec:reduction}

First of all, we apply the regularization trick to make the function uniformly-convex-uniformly-concave. The following lemma connects the approximate solution to the regularized problem and the original problem.
\begin{lem} \label{lem:reduction-UCUC}
Let $\tilde f(\vx,\vy): = f(\vx,\vy) + \frac{\mu_x}{3} \Vert \vx - \vx_0 \Vert^3 - \frac{\mu_y}{3} \Vert \vy - \vy_0 \Vert^3 $. If $(\hat \vx, \hat \vy)$ is an $(\epsilon/3)$-solution to $\tilde f(\vx,\vy)$ with $ \mu_x = \epsilon / (2D_x^3) $ and $\mu_y = \epsilon / (2 D_y^3)$, then it is an $\epsilon$-solution to Problem~(\ref{prob:main}).
\end{lem}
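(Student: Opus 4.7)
The plan is to directly compare the duality gap of $(\hat\vx,\hat\vy)$ under $f$ with the one under $\tilde f$, using that the cubic regularizers are uniformly bounded on the compact domains $\gX$ and $\gY$.

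First, I would fix $(\hat\vx,\hat\vy)\in\gX\times\gY$ and rewrite
\begin{align*}
f(\hat\vx,\vy) &= \tilde f(\hat\vx,\vy) - \tfrac{\mu_x}{3}\|\hat\vx-\vx_0\|^3 + \tfrac{\mu_y}{3}\|\vy-\vy_0\|^3, \\
f(\vx,\hat\vy) &= \tilde f(\vx,\hat\vy) - \tfrac{\mu_x}{3}\|\vx-\vx_0\|^3 + \tfrac{\mu_y}{3}\|\hat\vy-\vy_0\|^3.
\end{align*}
Using Assumption \ref{asm:D}, every $\|\vx-\vx_0\|\le D_x$ and $\|\vy-\vy_0\|\le D_y$, so taking max over $\vy\in\gY$ in the first line and min over $\vx\in\gX$ in the second line gives
\begin{align*}
\max_{\vy\in\gY} f(\hat\vx,\vy) &\le \max_{\vy\in\gY}\tilde f(\hat\vx,\vy) + \tfrac{\mu_y}{3} D_y^3, \\
\min_{\vx\in\gX} f(\vx,\hat\vy) &\ge \min_{\vx\in\gX}\tilde f(\vx,\hat\vy) - \tfrac{\mu_x}{3} D_x^3,
\end{align*}
where I dropped the nonnegative cubic that cuts in my favor on each side.

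Subtracting these two inequalities yields
\begin{align*}
\mathrm{Gap}_f(\hat\vx,\hat\vy) \le \mathrm{Gap}_{\tilde f}(\hat\vx,\hat\vy) + \tfrac{\mu_x}{3} D_x^3 + \tfrac{\mu_y}{3} D_y^3.
\end{align*}
Finally I would plug in the prescribed values $\mu_x = \epsilon/(2 D_x^3)$ and $\mu_y=\epsilon/(2 D_y^3)$, which make each regularization-error term equal to $\epsilon/6$, and combine with the assumed $\mathrm{Gap}_{\tilde f}(\hat\vx,\hat\vy)\le\epsilon/3$ to obtain $\mathrm{Gap}_f(\hat\vx,\hat\vy)\le \epsilon/3+\epsilon/6+\epsilon/6=2\epsilon/3\le\epsilon$, completing the reduction.

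There is no real obstacle here; this is a routine regularization-to-reduction argument. The only thing worth being careful about is keeping track of signs: the $\vx$-regularizer inflates $\max_\vy f(\hat\vx,\vy)$ relative to $\max_\vy\tilde f(\hat\vx,\vy)$ only through the $\vy$-cubic (which is independent of the $\vx$-cubic and hence contributes the $\mu_y D_y^3/3$ term), and symmetrically for the min side. Once that bookkeeping is done, choosing $(\mu_x,\mu_y)$ to absorb $2\epsilon/3$ of the target gap and leaving the remaining $\epsilon/3$ for $\mathrm{Gap}_{\tilde f}$ is immediate.
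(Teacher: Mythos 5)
Your proof is correct and follows essentially the same route as the paper's: both control the cubic regularizers by $\mu_x D_x^3/3 = \mu_y D_y^3/3 = \epsilon/6$ using the diameters, and propagate that control through the max and min defining the duality gap. Your only refinement is dropping the nonnegative cubic that cuts in your favor on each side, yielding the slightly sharper bound $2\epsilon/3$ where the paper simply uses the coarser uniform estimate $|\tilde f - f| \le \epsilon/3$ to conclude with $\epsilon$.
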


After regularization, the objective becomes uniformly-convex-uniformly-concave. Therefore, we can reduce the problem to study the oracle complexity of second-order algorithms in finding an $\epsilon$-solution to a function that satisfies the following assumption.

\begin{asm} \label{asm:UC-UC}
We suppose $f(\vx,\vy)$ is $\mu_x$-uniformly-convex-$\mu_y$-uniformly-concave, \textit{i.e.}, $f(\vx,\,\cdot\,)$ is $\mu_x$-uniformly convex for any fixed $\vx \in \sR^{d_x}$ and $f(\,\cdot\,\vy)$ is $\mu_y$-uniformly concave for any fixed $\vy \in \sR^{d_y}$, where $\mu_x, \mu_y >0$.
We say $f(\vx,\vy)$ is convex-concave when $\mu_x  = \mu_y =0$.
\end{asm}

Below, we present some useful lemmas which can be derived from Assumption \ref{asm:UC-UC}. 
\begin{lem} \label{lem:max-preserve-convex}
Consider a function $f(\vx,\vy)$ that satisfies Assumption \ref{asm:UC-UC}, then the primal function $\Phi(\vx) = \max_{\vy \in \gY} f(\vx,\vy) $ is $\mu_x$-uniformly convex, and the dual function $\Psi(\vy) = \min_{\vx \in \gX} f(\vx,\vy)$ is $\mu_y$-uniformly concave.
\end{lem}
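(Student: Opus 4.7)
The plan is to prove uniform convexity of $\Phi$ via the classical max-of-convex-functions argument combined with an envelope (Danskin) identity; the claim for $\Psi$ then follows by symmetry. First I would observe that for each fixed $\vx \in \gX$, the function $\vy \mapsto -f(\vx,\vy)$ is $\mu_y$-uniformly convex on the compact set $\gY$. By Lemma~\ref{lem:UC-grad-dominant}, the cubic growth condition around the minimizer forces the maximizer $\vy^*(\vx) := \arg\max_{\vy \in \gY} f(\vx,\vy)$ to exist and be unique. This uniqueness is exactly the hypothesis that enables Danskin's theorem, which yields differentiability of $\Phi$ together with the envelope formula $\nabla \Phi(\vx) = \nabla_x f(\vx, \vy^*(\vx))$.

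With this in hand, for any $\vx_1, \vx_2 \in \gX$ I would apply the $\mu_x$-uniform convexity of $f(\,\cdot\,, \vy^*(\vx_2))$ at the pair $(\vx_1, \vx_2)$:
\begin{align*}
f(\vx_1, \vy^*(\vx_2)) \ge f(\vx_2, \vy^*(\vx_2)) + \langle \nabla_x f(\vx_2, \vy^*(\vx_2)), \vx_1 - \vx_2\rangle + \frac{\mu_x}{3}\|\vx_1 - \vx_2\|^3.
\end{align*}
By definition of $\Phi$ we have $\Phi(\vx_1) \ge f(\vx_1, \vy^*(\vx_2))$, while the characterization of $\vy^*(\vx_2)$ gives $\Phi(\vx_2) = f(\vx_2, \vy^*(\vx_2))$, and the Danskin identity gives $\nabla \Phi(\vx_2) = \nabla_x f(\vx_2, \vy^*(\vx_2))$. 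Substituting these three facts collapses the display to
\begin{align*}
\Phi(\vx_1) \ge \Phi(\vx_2) + \langle \nabla \Phi(\vx_2), \vx_1 - \vx_2\rangle + \frac{\mu_x}{3}\|\vx_1 - \vx_2\|^3,
\end{align*}
which is precisely the definition of $\mu_x$-uniform convexity of $\Phi$ per Definition~\ref{dfn:UC}.

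For $\Psi$, I would apply the same argument to the auxiliary function $\tilde g(\vy, \vx) := -f(\vx, \vy)$, which is $\mu_y$-uniformly convex in $\vy$ for every fixed $\vx$ and $\mu_x$-uniformly concave in $\vx$ for every fixed $\vy$. Its primal function $\max_{\vx \in \gX} \tilde g(\vy, \vx)$ equals $-\Psi(\vy)$, so by the first part it is $\mu_y$-uniformly convex; equivalently $\Psi$ is $\mu_y$-uniformly concave.

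I do not foresee any serious obstacle; the only subtle point is ensuring that $\Phi$ is actually differentiable so that the inequality in Definition~\ref{dfn:UC} (stated with $\nabla$) is meaningful. This is exactly why uniform concavity in $\vy$, rather than mere concavity, is the right assumption here: it forces a unique maximizer and hence smoothness of the primal function via the envelope theorem. Should one prefer to avoid invoking Danskin, the identical chain of inequalities goes through with the subdifferential in place of $\nabla \Phi$, since $\nabla_x f(\vx_2, \vy^*(\vx_2)) \in \partial \Phi(\vx_2)$ whenever $\vy^*(\vx_2)$ maximizes $f(\vx_2, \,\cdot\,)$.
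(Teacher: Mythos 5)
Your proof is correct and follows essentially the same route as the paper: evaluate $f$ at the "wrong" maximizer $\vy^*(\vx_2)$, use $\Phi(\vx_1)\ge f(\vx_1,\vy^*(\vx_2))$ and the envelope identity $\nabla\Phi(\vx_2)=\nabla_x f(\vx_2,\vy^*(\vx_2))$, and then invoke $\mu_x$-uniform convexity of $f(\,\cdot\,,\vy^*(\vx_2))$. The paper leaves Danskin implicit and handles $\Psi$ by the same symmetry you note; your extra care about uniqueness of the maximizer and the subdifferential fallback is fine but not a different proof.
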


\begin{lem} \label{lem:cont-sol-set}
Consider a function $f(\vx,\vy)$ that satisfies Assumption  \ref{asm:grad-lip} and \ref{asm:UC-UC}, let $\vx^*(\vy) = \arg \min_{\vx \in \gX} f(\vx,\vy)$ and  $\vy^*(\vx) = \arg \max_{\vy \in \gY} f(\vx,\vy)$. Then both the mappings $\vx^*(\vy)$ and  $\vy^*(\vx)$ are continuous. Furthermore, we have that
\begin{align*}
    \Vert \vy^*(\vx_1)- \vy^*(\vx_2) \Vert^2 &\le \frac{\ell}{\mu_y} \Vert \vx_1 -\vx_2 \Vert, \quad \forall \vx_1,\vx_2 \in \gX; \\
    \Vert \vx^*(\vy_1)- \vx^*(\vy_2) \Vert^2 &\le \frac{\ell}{\mu_x} \Vert \vy_1 - \vy_2 \Vert, \quad \forall \vy_1, \vy_2 \in \gY.
\end{align*}
\end{lem}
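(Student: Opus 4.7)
The plan is to establish the quantitative Hölder bound first; continuity of the two mappings is then an immediate corollary, since the mappings are well defined (uniform concavity/convexity guarantees uniqueness of $\vy^*(\vx)$ and $\vx^*(\vy)$) and the stated inequalities force $\Vert\vy^*(\vx_1)-\vy^*(\vx_2)\Vert\to 0$ as $\Vert\vx_1-\vx_2\Vert\to 0$ (symmetrically for $\vx^*$). So the content of the lemma reduces to the two displayed inequalities, which are symmetric. I will only argue the first one in detail; the second follows by swapping the roles of $\vx$ and $\vy$ (with convex/concave directions swapped).

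Fix $\vx_1,\vx_2\in\gX$ and write $\vy_i = \vy^*(\vx_i)$. I would combine two ingredients. First, use first-order optimality of $\vy_1$ and $\vy_2$ for the constrained maximization of $f(\vx_i,\cdot)$ over $\gY$:
\begin{align*}
\langle \nabla_y f(\vx_1,\vy_1),\vy_2-\vy_1\rangle \le 0,
\qquad
\langle \nabla_y f(\vx_2,\vy_2),\vy_1-\vy_2\rangle \le 0.
\end{align*}
Second, apply $\mu_y$-uniform concavity of $f(\vx_2,\cdot)$ to the pair $(\vy_1,\vy_2)$, which (by adding the two defining uniform-concavity inequalities) gives the monotonicity-type bound
\begin{align*}
\langle \nabla_y f(\vx_2,\vy_2)-\nabla_y f(\vx_2,\vy_1),\vy_2-\vy_1\rangle
\;\le\; -\tfrac{2\mu_y}{3}\Vert \vy_1-\vy_2\Vert^3.
\end{align*}

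Summing the two optimality inequalities and inserting the identity
$\nabla_y f(\vx_1,\vy_1)-\nabla_y f(\vx_2,\vy_2) = \bigl(\nabla_y f(\vx_1,\vy_1)-\nabla_y f(\vx_2,\vy_1)\bigr)+\bigl(\nabla_y f(\vx_2,\vy_1)-\nabla_y f(\vx_2,\vy_2)\bigr)$ separates a cross term, controlled by $\ell$-Lipschitzness of $\nabla f$ in $\vx$, from the monotonicity term above. This yields
\begin{align*}
\tfrac{2\mu_y}{3}\Vert \vy_1-\vy_2\Vert^3 \;\le\; \ell\,\Vert \vx_1-\vx_2\Vert\,\Vert \vy_1-\vy_2\Vert,
\end{align*}
and dividing by $\Vert \vy_1-\vy_2\Vert$ gives the claimed bound (up to an absolute constant that can be absorbed in the statement). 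The inequality for $\vx^*(\vy)$ is obtained by repeating the argument verbatim on $-f$ with the roles of $(\vx,\vy)$ and $(\mu_y,\mu_x)$ exchanged.

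The main technical point to be careful about is the handling of the constraint: the optimality conditions must be written as variational inequalities (not as $\nabla_y f = 0$), because the maximizer can lie on the boundary of $\gY$. Once the two variational inequalities are added, the boundary terms cancel cleanly, so nothing further is needed. A secondary subtlety is ensuring that the uniform-convexity constant from Definition \ref{dfn:UC} propagates correctly through the monotonicity inequality; this is purely bookkeeping. Continuity of the mappings is then a direct consequence of the Hölder-$\tfrac12$ bound with exponent $1/2$ in $\Vert\vx_1-\vx_2\Vert$, with no separate argument required.
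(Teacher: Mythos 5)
Your argument is structurally identical to the paper's: both start from the constrained first-order optimality conditions written as variational inequalities, sum them, insert a telescope identity to separate a cross term (controlled by $\ell$-Lipschitzness of $\nabla_y f$ in $\vx$ via Cauchy--Schwarz) from a monotonicity term, and bound the latter using $\mu_y$-uniform concavity. The only place you diverge is the monotonicity constant: you derive
\begin{align*}
\langle \nabla_y f(\vx_2,\vy_2)-\nabla_y f(\vx_2,\vy_1),\vy_2-\vy_1\rangle \le -\tfrac{2\mu_y}{3}\Vert\vy_1-\vy_2\Vert^3
\end{align*}
by adding the two defining uniform-concavity inequalities (exactly as the paper itself does in Lemma~\ref{lem:U-monotone}), which leads to $\Vert\vy^*(\vx_1)-\vy^*(\vx_2)\Vert^2 \le \frac{3\ell}{2\mu_y}\Vert\vx_1-\vx_2\Vert$, a factor $3/2$ larger than the constant in the lemma statement. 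The paper instead cites Eq.~(4.2.12) of \citet{nesterov2018lectures}, applying monotonicity at the point $\vx_1$ (a cosmetic relabeling) with constant $\mu_y$ rather than $\tfrac{2\mu_y}{3}$, which is what produces the stated $\frac{\ell}{\mu_y}$. You correctly flag your constant as being off by an absorbable absolute factor; note, however, that the bound you use is the one directly certified by Definition~\ref{dfn:UC}, and it is consistent with the paper's own Lemma~\ref{lem:U-monotone}, so if the sharper monotonicity constant is desired one must justify it separately from the definition. Your treatment of the constraint via variational inequalities, and the observation that continuity is immediate from the H\"older-$1/2$ bound, both match the paper.
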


Lemma \ref{lem:cont-sol-set} is crucial in our analysis but is not necessary for standard NPE analysis \citep{monteiro2012iteration}. 
For this reason, we additionally require the $\ell$-smoothness in our algorithm, while NPE or its variants may not need. But our additional assumption is mild since the $\ell$-smoothness always hold in a compact set if the function has Lipschitz continuous Hessians.


\begin{algorithm*}[htbp]  
\caption{Minimax-AIPE}\label{alg:Minimax-AIPE}
\begin{algorithmic}[1] 
\renewcommand{\algorithmicrequire}{ \textbf{Input:}}
\STATE  Run AIPE-restart (Algorithm~\ref{alg:ANPE-restart}) with proximal oracle given by Algorithm~\ref{alg:Minimax-AIPE-mid} to solve 
\begin{align*}
\min_{\vx \in \gX} \Phi(\vx)     
\end{align*}
for finding $\hat\vx$ such that $\Vert\hat\vx - \vx^* \Vert \le \zeta_1$, where $\vx^* = \arg \min_{\vx \in \gX} \Phi(\vx)$.
\STATE Run $\gM_{\rm min}$ to solve 
\begin{align*}
    \max_{\vy \in \gY }f(\hat\vx,\,\cdot\,)
\end{align*}
for finding $\hat \vy$ such that $\Vert \hat\vy - \vy^*(\hat\vx) \Vert \le \zeta_1$, where $\vy^*(\hat\vx) = \arg \max_{\vy \in \gY} f(\hat\vx,\vy)$.
\STATE $(\vz^{\rm out}, \vc^{\rm out}) \leftarrow {\rm EG}(\hat \vz, \mF, 1/ (2\ell) )$
\RETURN $\vz^{\rm out} = (\vx^{\rm out},\vy^{\rm out})$
\end{algorithmic}
\end{algorithm*}

\subsection{Minimax-AIPE and its Convergence Analysis} \label{subsec:main}

We propose the Minimax-AIPE in Algorithm \ref{alg:Minimax-AIPE} for $\mu_x$-strongly-concave-$\mu_y$-strongly-concave minimax problems.
Our algorithm is a general scheme to accelerate second-order minimax optimization. It can be applied on any linear convergent algorithms for uniformly convex minimization problems and uniformly-convex-uniformly-concave minimax problems. We denote such algorithms as $\gM_{\rm min}$ and $\gM_{\rm saddle}$, respectively. 
We assume that $\gM_{\rm min}$ and $\gM_{\rm saddle}$ have the following theoretical guarantee, which is generic and be satisfied by many existing algorithms.

\begin{asm} \label{asm:M-min}
Let $h(\vz): \gZ \rightarrow \sR$ be $\mu$-uniformly convex and has $\rho$-Lipschitz continuous Hessians. We assume $\gM_{\rm min}$ can find a point $\vz$ such that $\Vert \vz - \vz^* \Vert \le \zeta$ in $T_{\rm min}(\rho,\mu) \log (d_0/\zeta) $ iterations, where $d_0= \Vert \vz_0 - \vz^* \Vert$ and $\vz^* = \arg \min_{\vz \in \gZ} h(\vz)$. 
\end{asm}

\begin{asm} \label{asm:M-minimax}
Let $f(\vx,\vy) : \gX \times \gY \rightarrow \sR$ satisfies Assumption \ref{asm:Hess-lip} and \ref{asm:UC-UC}. We assume $\gM_{\rm saddle}$ can find a point $\vz$ such that $\Vert \vz - \vz^* \Vert \le \zeta$ in $T_{\rm saddle}(\rho,\mu) \log (d_0/\zeta) $ iterations, where $d_0= \Vert \vz_0 - \vz^* \Vert$ and $\vz^* = (\vx^*, \vy^*) = \arg \min_{\vx \in \gX} \max_{\vy \in \gY} f(\vx, \vy)$. 
\end{asm}

The Minimax-AIPE is a triple-loop algorithm. 
Below, we introduce the procedures of each loop one by one. The outer loop (Algorithm \ref{alg:Minimax-AIPE}) applies AIPE-restart (Algorithm \ref{alg:ANPE-restart}) to minimize the primal objective $\Phi(\vx):= \max_{\vy \in \gY } f(\vx,\vy)$, which requires the inexact zeroth-order oracles, first-order oracles, and second-order proximal oracles of $\Phi(\vx)$. As both the inexact zeroth-order and first-order oracle of $\Phi(\vx)$ are easily obtainable (see Theorem \ref{thm:get-zo-fo}), the non-trivial one is the proximal oracle, which will be implemented by the middle loop of Minimax-AIPE (Algorithm \ref{alg:Minimax-AIPE-mid}).
If the middle loop can successfully return a proximal oracle, then the convergence of AIPE simply follows
Theorem \ref{thm:ANPE-restart}. 
Below, we show the required precision $\zeta_1$ for Algorithm \ref{alg:Minimax-AIPE} to ensure an $\epsilon$-solution to Problem (\ref{prob:main}).

\begin{algorithm*}[htbp]  
\caption{${\rm iProx}_{\Phi} (\bar \vx, \gamma)$
}\label{alg:Minimax-AIPE-mid}
\begin{algorithmic}[1] 
\renewcommand{\algorithmicrequire}{ \textbf{Input:}}
\STATE Run AIPE-restart (Algorithm~\ref{alg:ANPE-restart}) with proximal oracle given by Algorithm \ref{alg:Minimax-AIPE-inner} to solve 
\begin{align*}
    \max_{\vy \in \gY} \Psi(\vy; \bar \vx)
\end{align*}
for finding $\hat \vy$ such that $\Vert \hat \vy - \vy^*(\bar \vx) \Vert \le \zeta_2$, where $\vy^*(\bar \vx) = \arg \max_{\vy \in \gY} \Psi(\vy; \bar \vx)$.
\STATE Run $\gM_{\rm min}$ to solve 
\begin{align*}
    \min_{\vx \in \gX} g(\,\cdot\, ,  \hat \vy ; \bar \vx)
\end{align*}
for finding $\hat \vx$ such that $\Vert \hat \vx - \vx^*(\hat \vy; \bar \vx) \Vert \le \zeta_2$, where $\vx^*(\hat \vy; \bar \vx)= \arg \min_{\vx \in \gX} g(\vx,\hat \vy; \bar\vx)$.
\STATE $(\vx^{\rm out},\vu^{\rm out}) \leftarrow {\rm EG}(\hat \vx, \nabla_x g(\vx, \hat \vy; \bar \vx),  1/ (2(\ell + 2\gamma D)))$. \\

\RETURN  $(\vx^{\rm out}, \vu^{\rm out})$
\end{algorithmic}
\end{algorithm*}

\begin{algorithm*}[htbp]  
\caption{${\rm iProx}_{\Psi(\,\cdot\,; \bar \vx)} (\bar \vy, \gamma)$
}\label{alg:Minimax-AIPE-inner}
\begin{algorithmic}[1] 
\renewcommand{\algorithmicrequire}{ \textbf{Input:}}
\STATE Run $\gM_{\rm saddle}$ to solve
\begin{align*}
    \min_{\vx \in \gX} \max_{\vy \in \gY}h(\vx,\vy; \bar \vx, \bar \vy)
\end{align*}
for finding $\hat \vz$ such that $\Vert \hat \vz - \vz^*(\bar \vz) \Vert \le \zeta_3$, where $\vz^*(\bar \vz) = \arg \min_{\vx \in \gX} \max_{\vy \in \gY} h(\vx, \vy; \bar \vx, \bar \vy)$.
\STATE $(\vz^{\rm out}, \vc^{\rm out}) \leftarrow \text{EG}(\hat \vz, \mF, 1/(2(\ell +2 \gamma D))$.  
\STATE $(\vx^{\rm out},\vy^{\rm out}) = \vz^{\rm out}$, ~$ (\vu^{\rm out}, \vv^{\rm out}) = \vc^{\rm out}$ 
\RETURN $(\vy^{\rm out}, \vv^{\rm out})$ 
\end{algorithmic}
\end{algorithm*}

\begin{asm} \label{asm:prox-Phi}
Let $\zeta_1$ be the precision in Algorithm \ref{alg:Minimax-AIPE}. We assume that Algorithm \ref{alg:Minimax-AIPE-mid} can return a $(\delta,\gamma)$-proximal oracle of the primal objective $\Phi(\vx)$ with $\delta \le {\mu_x \zeta_1^4}/{(144 D^2)}$,  where $\Phi(\vx) := \max_{\vy \in \gY} f(\vx,\vy)$ and $D = \max\{ D_x, D_y\}$.
\end{asm}

\begin{thm}[Outer-Loop Complexity] \label{thm:outer}
Let $\zeta_1 \le {\mu_y \epsilon^2}/{(147 \ell^3 D^2)}$.
Under Assumption \ref{asm:D}, \ref{asm:grad-lip}, \ref{asm:Hess-lip}, \ref{asm:UC-UC},\ref{asm:M-min}, and \ref{asm:prox-Phi}, Algorithm \ref{alg:Minimax-AIPE} can find an $\epsilon$-solution to Problem (\ref{prob:main}) in $\gO\left((\gamma / \mu_x)^{2/7} \log (D/ \zeta_1) \right)$ calls of Algorithm~\ref{alg:Minimax-AIPE-mid}, and $ \gO \left( T_{\rm min}(\rho, \mu_y) \log (D / \zeta_1)  \right)$ iterations of $\gM_{\rm min}$.
\end{thm}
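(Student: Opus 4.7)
The plan is to decompose Algorithm~\ref{alg:Minimax-AIPE} into its three phases---primal minimization via AIPE-restart, dual refinement via $\gM_{\rm min}$, and a final EG polishing---and to track how the intermediate accuracy $\zeta_1$ propagates into a bound on $\mathrm{Gap}(\vz^{\rm out})$. My goal is to show that the stated $\zeta_1 \le \mu_y \epsilon^2/(147\ell^3 D^2)$ is exactly the choice that brings the final gap below $\epsilon$.

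First I would establish the primal convergence. By Lemma~\ref{lem:max-preserve-convex}, $\Phi(\vx) := \max_{\vy \in \gY} f(\vx,\vy)$ is $\mu_x$-uniformly convex. Assumption~\ref{asm:prox-Phi} guarantees the proximal oracle produced by Algorithm~\ref{alg:Minimax-AIPE-mid} attains precisely the precision $\delta \le \mu_x \zeta_1^4/(144 D^2)$ required by Theorem~\ref{thm:ANPE-restart}. Applying AIPE-restart then yields $\hat\vx$ with $\Vert \hat\vx - \vx^*\Vert \le \zeta_1$ in $\gO((\gamma/\mu_x)^{2/7}\log(D/\zeta_1))$ calls of Algorithm~\ref{alg:Minimax-AIPE-mid}. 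The inexact zeroth- and first-order oracles of $\Phi$ needed inside AIPE can be supplied to commensurate accuracy, as noted in the discussion preceding the theorem, and their cost is absorbed in the leading-order count.

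Next I would invoke $\gM_{\rm min}$ on $-f(\hat\vx,\cdot)$, which by Assumption~\ref{asm:UC-UC} is $\mu_y$-uniformly convex and by Assumption~\ref{asm:Hess-lip} has $\rho$-Lipschitz Hessians, so Assumption~\ref{asm:M-min} delivers $\hat\vy$ with $\Vert \hat\vy - \vy^*(\hat\vx)\Vert \le \zeta_1$ in the claimed $\gO(T_{\rm min}(\rho,\mu_y)\log(D/\zeta_1))$ iterations. To connect $\hat\vz = (\hat\vx,\hat\vy)$ with the saddle point $\vz^* = (\vx^*,\vy^*(\vx^*))$, I would apply the H\"older continuity from Lemma~\ref{lem:cont-sol-set}, obtaining $\Vert \vy^*(\hat\vx) - \vy^*\Vert \le \sqrt{(\ell/\mu_y)\zeta_1}$; the triangle inequality then yields
\begin{align*}
\Vert \hat\vz - \vz^*\Vert \le 2\zeta_1 + \sqrt{(\ell/\mu_y)\,\zeta_1}.
\end{align*}

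The final and hardest step is the EG polishing. Lemma~\ref{lem:eg-grad} with stepsize $1/(2\ell)$ produces $(\vz^{\rm out},\vc^{\rm out})$ satisfying $\Vert \mF(\vz^{\rm out}) + \vc^{\rm out}\Vert = \gO(\ell)\,\Vert \hat\vz - \vz^*\Vert$. The main obstacle is converting this stationarity estimate into a gap bound: a naive approach dropping the subgradient $\vc^{\rm out}$ fails near the boundary of $\gZ$. The fix I would use is to combine the convex-concavity of $f$ with the normal-cone inequality $\langle \vc^{\rm out}, \vz^{\rm out} - \vz\rangle \ge 0$, so that for every $(\vx,\vy) \in \gZ$,
\begin{align*}
f(\vx^{\rm out},\vy) - f(\vx,\vy^{\rm out}) \le \langle \mF(\vz^{\rm out}) + \vc^{\rm out}, \vz^{\rm out} - \vz\rangle \le \sqrt{2}\,D \cdot \Vert \mF(\vz^{\rm out}) + \vc^{\rm out}\Vert.
\end{align*}
Taking the supremum bounds $\mathrm{Gap}(\vz^{\rm out})$ by $\gO(\ell^{3/2} D\sqrt{\zeta_1/\mu_y})$, and substituting the hypothesized $\zeta_1$ forces this to be at most $\epsilon$, completing the proof.
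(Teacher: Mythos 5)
Your proposal is correct and follows essentially the same route as the paper's proof: AIPE-restart on $\Phi$ to get $\|\hat\vx-\vx^*\|\le\zeta_1$, then $\gM_{\rm min}$ on $-f(\hat\vx,\cdot)$ combined with the H\"older bound of Lemma~\ref{lem:cont-sol-set} to control $\|\hat\vz-\vz^*\|$, then Lemma~\ref{lem:eg-grad} for the residual norm, and finally convex-concavity plus the normal-cone term and Cauchy--Schwarz to convert to a gap bound. Your gap-conversion step in fact tracks the sign of the normal-cone contribution more carefully than the paper's write-up, but the substance of the argument is identical.
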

To introduce the middle loop of Minimax-AIPE, we denote several surrogate functions:
\begin{align}\label{eq:phi}
\begin{split}
     &\Phi(\vx) :=\max_{\vy \in \gY} f(\vx,\vy),\quad g(\vx,\vy ; \bar \vx):= f(\vx,\vy) + \frac{\gamma}{3} \Vert \vx - \bar \vx \Vert^3, \\
    &\Psi(\vy; \bar \vx) :=  \min_{\vx \in \gX} g(\vx,\vy; \bar \vx)
\end{split}
\end{align}
The task of the middle loop of Minimax-AIPE
(Algorithm \ref{alg:Minimax-AIPE-mid}) is to
implement a $(\delta, \gamma)$-proximal oracle for the primal objective $\Phi(\vx)$
such that $\delta \lesssim {\mu_x \zeta_1^3}/{D}$ as required by Theorem \ref{thm:outer}. Note that
\begin{align*} 
    {\rm Prox}_{\Phi}(\bar \vx, \gamma) &=
    \arg \min_{\vx \in \gX} \left\{ \Phi(\vx) + \frac{\gamma}{3} \Vert \vx - \bar \vx \Vert^3 \right\}
\end{align*}
can be obtained by solving the equivalent subproblem
\begin{align} \label{prob:prox-x}
    \min_{\vx \in \gX} \max_{\vy \in \gY} g(\vx,\vy; \bar \vx) 
    =  \max_{\vy \in \gY} \Psi(\vy ; \bar \vx).
\end{align}
By Equation (\ref{prob:prox-x}),
Algorithm \ref{alg:Minimax-AIPE-mid} applies AIPE-restart (Algorithm \ref{alg:ANPE-restart}) to maximize $\Psi(\vy; \bar \vx)$, whose proximal oracle is obtained by Algorithm \ref{alg:Minimax-AIPE-inner}.
If Algorithm \ref{alg:Minimax-AIPE-inner} can  achieve the goal, then the
following theorem shows that Algorithm \ref{alg:Minimax-AIPE-mid} can successfully return desired oracles for $\Phi(\vx)$.

\begin{asm} \label{asm:prox-Psi}
Let $\zeta_2$ be the precision in Algorithm \ref{alg:Minimax-AIPE-mid}. Assume that Algorithm \ref{alg:Minimax-AIPE-inner} returns a $(\delta,\gamma)$-proximal oracle of the dual objective $\Psi(\vy; \bar \vx)$ with $\delta \le {\mu_y \zeta_2^4}/{(144 D^2)}$,  where $\Psi(\vy; \bar \vx) := \min_{\vx \in \gX} g(\vx,\vy; \bar \vx)$ and $D = \max\{ D_x, D_y\}$. 
\end{asm}

\begin{thm}[Middle-Loop Complexity] \label{thm:middle}
Let $\zeta_2 = \Omega(1/ {\rm poly} (\rho, \ell,L, D,\gamma, \mu_x^{-1}, \mu_y^{-1}, \zeta_1^{-1} ))$. Under Assumption \ref{asm:D}, \ref{asm:function-lip}, \ref{asm:grad-lip}, \ref{asm:Hess-lip}, \ref{asm:UC-UC}, \ref{asm:M-min}, and \ref{asm:prox-Psi},
Algorithm \ref{alg:Minimax-AIPE-mid} returns a $(\delta,\gamma)$-proximal oracle for $\Phi(\vx)$ that satisfies Assumption \ref{asm:prox-Phi} ($\delta \lesssim {\mu_x \zeta_1^4}/{D^2}$) in $\gO \left( 
(\gamma/\mu_y)^{2/7} \log (D/ \zeta_2)
\right)$ calls of Algorithm \ref{alg:Minimax-AIPE-inner}, and $T_{\rm min}(\rho+ 2 \gamma, \gamma/2) \log (D/ \zeta_2)$ iterations of $\gM_{\rm min}$. We can also obtain the $\delta$- zeroth-order and first-order oracles for $\Phi(\vx)$ in $T_{\rm min}(\rho, \mu_y) \log (D/ \zeta_2)$ iterations of $\gM_{\rm min}$.
\end{thm}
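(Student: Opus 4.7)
The plan is to analyze Algorithm~\ref{alg:Minimax-AIPE-mid} stage by stage. Let $(\vx_g^*, \vy_g^*) := \arg\min_{\vx \in \gX}\max_{\vy \in \gY} g(\vx,\vy;\bar\vx)$, so $\vx_g^* = {\rm Prox}_{\Phi}(\bar\vx,\gamma)$ is the target of the proximal oracle; by the saddle first-order conditions together with $\nabla_\vy g = \nabla_\vy f$, we have $\vy_g^* = \vy^*(\bar\vx) = \vy^*(\vx_g^*)$, where the last equality uses the paper's convention $\vy^*(\vx) = \arg\max_\vy f(\vx,\vy)$. For the first stage, Lemma~\ref{lem:max-preserve-convex} implies $\Psi(\,\cdot\,;\bar\vx)$ is $\mu_y$-uniformly concave, so combining Assumption~\ref{asm:prox-Psi} with Theorem~\ref{thm:ANPE-restart} shows AIPE-restart returns $\hat\vy$ with $\|\hat\vy - \vy_g^*\| \le \zeta_2$ in $\gO((\gamma/\mu_y)^{2/7}\log(D/\zeta_2))$ calls of Algorithm~\ref{alg:Minimax-AIPE-inner}. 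For the second stage, $g(\,\cdot\,,\hat\vy;\bar\vx)$ is $(\gamma/2)$-uniformly convex by Lemma~\ref{lem:cubic-func} and has $(\rho+2\gamma)$-Lipschitz Hessian, so Assumption~\ref{asm:M-min} yields $\|\hat\vx - \vx^*(\hat\vy;\bar\vx)\| \le \zeta_2$ in $T_{\rm min}(\rho+2\gamma,\gamma/2)\log(D/\zeta_2)$ iterations of $\gM_{\rm min}$.

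The main obstacle is the third stage, which must certify the proximal condition for $\Phi$ from purely approximate saddle information. With the step size $\eta = 1/(2(\ell + 2\gamma D))$ matching the Lipschitz constant of $\nabla_\vx g$ on $\gX$ (using Assumption~\ref{asm:D}), Lemma~\ref{lem:eg-grad} applied to the operator $\nabla_\vx g(\,\cdot\,,\hat\vy;\bar\vx)$ gives $\|\nabla_\vx g(\vx^{\rm out},\hat\vy;\bar\vx) + \vu^{\rm out}\| = \gO((\ell+\gamma D)\zeta_2)$. Using Danskin's theorem, I decompose
\begin{align*}
&\nabla\Phi(\vx^{\rm out}) + \vu^{\rm out} + \gamma\|\vx^{\rm out}-\bar\vx\|(\vx^{\rm out}-\bar\vx) \\
&\quad = \bigl[\nabla_\vx f(\vx^{\rm out},\vy^*(\vx^{\rm out})) - \nabla_\vx f(\vx^{\rm out},\hat\vy)\bigr] + \bigl[\nabla_\vx g(\vx^{\rm out},\hat\vy;\bar\vx) + \vu^{\rm out}\bigr].
\end{align*}
The second bracket is controlled above. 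The first bracket is bounded by $\ell\,\|\vy^*(\vx^{\rm out}) - \hat\vy\|$, and the triangle inequality gives $\|\vy^*(\vx^{\rm out}) - \hat\vy\| \le \|\hat\vy - \vy_g^*\| + \|\vy^*(\vx_g^*) - \vy^*(\vx^{\rm out})\| \le \zeta_2 + \sqrt{(\ell/\mu_y)\|\vx_g^*-\vx^{\rm out}\|}$ by Lemma~\ref{lem:cont-sol-set}. The distance $\|\vx_g^* - \vx^{\rm out}\|$ is further split through $\hat\vx$ and $\vx^*(\hat\vy;\bar\vx)$: the gap $\|\vx^*(\hat\vy;\bar\vx)-\vx_g^*\|$ is bounded via Lemma~\ref{lem:cont-sol-set} applied to the saddle problem of $g$ (giving $\gO(\sqrt{\zeta_2})$), while $\|\vx^{\rm out}-\hat\vx\|$ is $\gO(\zeta_2)$ because one EG step moves by at most $\eta$ times the operator norm at $\hat\vx$, which is already $\gO((\ell+\gamma D)\zeta_2)$ by smoothness. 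Collecting terms yields a total residual that is a polynomial in $\zeta_2$ (dominated by an $\gO(\zeta_2^{1/4})$ term), so the hypothesis $\zeta_2 = \Omega(1/{\rm poly}(\rho,\ell,L,D,\gamma,\mu_x^{-1},\mu_y^{-1},\zeta_1^{-1}))$ suffices to force the residual below $\mu_x\zeta_1^4/(144D^2)$, verifying Assumption~\ref{asm:prox-Phi}.

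For the auxiliary $\delta$-zeroth- and first-order oracles of $\Phi$, at any query $\bar\vx$ I run $\gM_{\rm min}$ on the $\mu_y$-uniformly concave, $\rho$-Hessian-Lipschitz function $f(\bar\vx,\,\cdot\,)$ to obtain $\tilde\vy$ with $\|\tilde\vy-\vy^*(\bar\vx)\| \le \zeta_2$ in $T_{\rm min}(\rho,\mu_y)\log(D/\zeta_2)$ iterations. Then $\hat\Phi(\bar\vx) := f(\bar\vx,\tilde\vy)$ and $\vg(\bar\vx) := \nabla_\vx f(\bar\vx,\tilde\vy)$ have errors at most $L\zeta_2$ and $\ell\zeta_2$ by Assumptions~\ref{asm:function-lip}, \ref{asm:grad-lip}, and Danskin's theorem, which drop below $\delta$ for $\zeta_2$ inverse-polynomially small in the stated parameters.
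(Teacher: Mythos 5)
Your proof follows the same overall architecture as the paper's (analyze the three stages, propagate accuracies, decompose the proximal residual for $\Phi$ through $\hat\vy$ and the saddle solution, then collect polynomial-in-$\zeta_2$ remainders). The stage~1, stage~2, and auxiliary-oracle analyses are correct and match the paper. The residual decomposition via $\nabla_x f(\vx^{\rm out},\vy^*(\vx^{\rm out})) - \nabla_x f(\vx^{\rm out},\hat\vy)$ is essentially the same as the paper's decomposition through $\vy^*(\bar\vx)$, and you get the same dominant $\gO(\zeta_2^{1/4})$ order. The auxiliary zeroth- and first-order oracle argument also matches the paper's Theorem~\ref{thm:get-zo-fo}.

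However, there is a genuine gap in one step. You claim $\|\vx^{\rm out}-\hat\vx\| = \gO(\zeta_2)$ "because one EG step moves by at most $\eta$ times the operator norm at $\hat\vx$, which is already $\gO((\ell+\gamma D)\zeta_2)$ by smoothness." This fails in the constrained setting $\gX \subsetneq \sR^{d_x}$: at the constrained minimizer $\vx^*(\hat\vy;\bar\vx)$ the gradient $\nabla_\vx g$ need not vanish — it is cancelled by a normal-cone element — so $\|\nabla_\vx g(\hat\vx,\hat\vy;\bar\vx)\|$ can be $\Omega(1)$ even though $\|\hat\vx - \vx^*(\hat\vy;\bar\vx)\| \le \zeta_2$. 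Smoothness alone does not give you $\|\nabla_\vx g(\hat\vx,\hat\vy;\bar\vx)\| = \gO(\zeta_2)$. The paper avoids this by never bounding $\|\vx^{\rm out}-\hat\vx\|$ directly: instead it combines the EG guarantee $\|\nabla_\vx g(\vx^{\rm out},\hat\vy;\bar\vx) + \vu^{\rm out}\| = \gO((\ell+\gamma D)\zeta_2)$ with the $(\gamma/2)$-uniform convexity of $g(\,\cdot\,,\hat\vy;\bar\vx)$ (via Lemma~\ref{lem:U-monotone}) to deduce $\|\vx^{\rm out} - \vx^*(\hat\vy;\bar\vx)\| = \gO(\sqrt{\zeta_2})$, then passes to $\vx_g^*$ via Lemma~\ref{lem:cont-sol-set}. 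You should replace the EG-displacement argument with this uniform-convexity argument; the resulting bound $\|\vx_g^*-\vx^{\rm out}\| = \gO(\sqrt{\zeta_2})$ is what you needed anyway (and is what your final $\zeta_2^{1/4}$ tally implicitly assumed), so the remainder of your computation goes through unchanged.
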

Finally, the inner loop of Minimax-AIPE (Algorithm \ref{alg:Minimax-AIPE-inner}) implements a $(\delta,\gamma)$-proximal oracle for the dual objective $\Psi(\vy; \bar\vx) $. Note that
\begin{align*} 
    {\rm Prox}_{\Psi(\,\cdot\,; \bar \vx)} (\bar \vy, \gamma) &= 
    \arg \max_{\vy \in \gY} \left\{\Psi(\vy; \bar \vx) - \frac{\gamma}{3} \Vert \vy - \bar \vy \Vert^3  \right\}
\end{align*}
can be obtained by solving the equivalent subproblem
\begin{align} \label{prob:prox-x-y}
    \min_{\vx \in \gX} \max_{\vy \in \gY} \left\{ h(\vx,\vy; \bar \vx, \bar \vy): =  f(\vx,\vy) + \frac{\gamma}{3} \Vert \vx - \bar \vx \Vert^3 - \frac{\gamma}{3} \Vert \vy - \bar \vy \Vert^3 \right\}.
\end{align}
It means we can apply a globally convergent algorithm $\gM_{\rm saddle}$ to find the saddle point of function $h(\vx,\vy; \bar \vx, \bar \vy)$ in Algorithm \ref{alg:Minimax-AIPE-inner}. 
After the above steps, we reduce the optimization of a $\mu_x$-uniformly-convex-$\mu_y$-uniformly-concave function (\ref{prob:main}) to 
the optimization of a $(\gamma/2)$-uniformly-convex-$(\gamma/2)$-uniformly-concave function (\ref{prob:prox-x-y}). Since the latter has a better condition number, the new subproblem is significantly easier to optimize compared to the original problem. 
The following theorem states the complexity of 
Algorithm \ref{alg:Minimax-AIPE-inner} to implement desired oracles for $\Psi(\vy ; \bar \vx)$.

\begin{thm}[Inner-Loop Complexity] \label{thm:ANPE-inner}
Let $\zeta_3 = \Omega(1/ {\rm poly} (\rho, \ell, L, D,\gamma, \mu_y^{-1}, \zeta_2^{-1} ))$. Under Assumption \ref{asm:D}, \ref{asm:function-lip}, \ref{asm:grad-lip}, \ref{asm:Hess-lip}, \ref{asm:UC-UC}, and \ref{asm:M-minimax},
Algorithm \ref{alg:Minimax-AIPE-inner} returns a $(\delta,\gamma)$-proximal oracle for $\Psi(\vy ; \bar \vx)$ that satisfies Assumption \ref{asm:prox-Psi} ($\delta \lesssim \mu_y \zeta_2^4 / D^2$) in $T_{\rm saddle} (\rho+2\gamma, \gamma/2) \log (D / \zeta_3)$ iterations of $\gM_{\rm saddle}$. We can also obtain the $\delta$-zeroth-order and first-order oracles for $\Psi(\vy; \bar \vx)$ in $T_{\rm min}(\rho+ 2 \gamma, \gamma/2) \log (D/ \zeta_3)$ iterations of $\gM_{\rm min}$.
\end{thm}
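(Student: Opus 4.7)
The plan is to analyze Algorithm~\ref{alg:Minimax-AIPE-inner} by first checking that the cubically regularized function $h(\vx,\vy;\bar\vx,\bar\vy) = f(\vx,\vy) + \frac{\gamma}{3}\|\vx-\bar\vx\|^3 - \frac{\gamma}{3}\|\vy-\bar\vy\|^3$ is $(\gamma/2)$-uniformly-convex-$(\gamma/2)$-uniformly-concave with $(\rho+2\gamma)$-Lipschitz Hessians and $(\ell+2\gamma D)$-Lipschitz gradients on the compact domain (the last via Lemma~\ref{lem:cubic-func}, since on diameter $D$ the cubic's Hessian has spectral norm at most $2\gamma D$). Assumption~\ref{asm:M-minimax} thus applies to $h$, so $\gM_{\rm saddle}$ produces $\hat\vz$ with $\|\hat\vz-\vz^*(\bar\vz)\|\le \zeta_3$ in $T_{\rm saddle}(\rho+2\gamma,\gamma/2)\log(D/\zeta_3)$ iterations. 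Feeding $\hat\vz$ into the extragradient oracle with $\eta = 1/(2(\ell+2\gamma D))$ (so $\eta\cdot(\ell+2\gamma D)=1/2$) and invoking Lemma~\ref{lem:eg-grad} yields $(\vz^{\rm out},\vc^{\rm out}) = (\vx^{\rm out},\vy^{\rm out},\vu^{\rm out},\vv^{\rm out})$ with $\|\mF_h(\vz^{\rm out})+\vc^{\rm out}\| \le C_1\zeta_3$ for $C_1 = \gO(\ell+\gamma D)$, where $\vu^{\rm out}\in\partial\gI_\gX(\vx^{\rm out})$ and $\vv^{\rm out}\in\partial\gI_\gY(\vy^{\rm out})$.

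The main technical step is converting this operator residual on $h$ into a $(\delta,\gamma)$-proximal oracle for $\Psi(\,\cdot\,;\bar\vx)$ at $\bar\vy$. Using Danskin's theorem $\nabla_y\Psi(\vy;\bar\vx)=\nabla_y f(\vx^*(\vy;\bar\vx),\vy)$ (valid because $g(\,\cdot\,,\vy;\bar\vx)$ is $(\gamma/2)$-uniformly convex, so its minimizer is unique), I use the decomposition
\begin{align*}
&-\nabla_y\Psi(\vy^{\rm out};\bar\vx) + \vv^{\rm out} + \gamma\|\vy^{\rm out}-\bar\vy\|(\vy^{\rm out}-\bar\vy) \\
&= \bigl[-\nabla_y h(\vz^{\rm out}) + \vv^{\rm out}\bigr] + \bigl[\nabla_y f(\vx^{\rm out},\vy^{\rm out}) - \nabla_y f(\vx^*(\vy^{\rm out};\bar\vx),\vy^{\rm out})\bigr].
\end{align*}
The first bracket is bounded by $C_1\zeta_3$ from the EG step, and the second by $\ell\|\vx^{\rm out}-\vx^*(\vy^{\rm out};\bar\vx)\|$ via Assumption~\ref{asm:grad-lip}. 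To bound this distance, I combine the $\vx$-block of the EG residual $\|\nabla_x g(\vz^{\rm out};\bar\vx)+\vu^{\rm out}\| \le C_1\zeta_3$ with $(\gamma/2)$-uniform convexity of $g(\,\cdot\,,\vy^{\rm out};\bar\vx)$: a constrained version of Lemma~\ref{lem:UC-grad-dominant} gives $\|\vx^{\rm out}-\vx^*(\vy^{\rm out};\bar\vx)\| \le \sqrt{6 C_1\zeta_3/\gamma}$. Choosing $\zeta_3$ polynomially small so that $C_1\zeta_3 + \ell\sqrt{6 C_1\zeta_3/\gamma} \le \mu_y\zeta_2^4/(144 D^2)$ completes the proximal-oracle claim, since the slack $(\lambda/2)\|\vy^{\rm out}-\bar\vy\|$ in Definition~\ref{dfn:inexact-MS-oracle} is non-negative and can simply be dropped.

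For the $\delta$-zeroth- and first-order oracles of $\Psi$ needed by Lines~\ref{Line:igrad} and~\ref{Line:ifunc} of AIPE, I apply $\gM_{\rm min}$ to the $(\gamma/2)$-uniformly convex function $g(\,\cdot\,,\vy;\bar\vx)$ (Hessian $(\rho+2\gamma)$-Lipschitz) in $T_{\rm min}(\rho+2\gamma,\gamma/2)\log(D/\zeta_3)$ iterations to obtain $\tilde\vx$ with $\|\tilde\vx-\vx^*(\vy;\bar\vx)\|$ sufficiently small; then $\hat\Psi(\vy):=g(\tilde\vx,\vy;\bar\vx)$ and $\vg(\vy):=\nabla_y f(\tilde\vx,\vy)$ are $\delta$-accurate by Assumptions~\ref{asm:function-lip} and~\ref{asm:grad-lip}. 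The hard part of the argument is the careful error bookkeeping in the second paragraph: propagating $\zeta_3$ through the EG residual and then through the square-root-style gradient-domination bound, while tracking the dependence on all problem constants to verify that a polynomial choice of $\zeta_3$ suffices.
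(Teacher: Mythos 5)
Your proof is correct and follows the same high-level structure as the paper's (run $\gM_{\rm saddle}$ to approach the saddle point of $h$, apply one EG step with step size $1/(2(\ell+2\gamma D))$, then convert the resulting operator residual on $h$ into a proximal-oracle residual for $\Psi$), but the key error-propagation step uses a different decomposition. The paper writes $\nabla_y\Psi(\vy^{\rm out};\bar\vx) + \gamma\|\vy^{\rm out}-\bar\vy\|(\vy^{\rm out}-\bar\vy) + \vv^{\rm out} = \nabla_y h(\vx^*(\vy^{\rm out};\bar\vx),\vy^{\rm out};\bar\vx,\bar\vy) + \vv^{\rm out}$ and then introduces the intermediate point $\vx^*(\bar\vx,\bar\vy)$ (the $\vx$-coordinate of the saddle of $h$): first it compares $\vx^*(\vy^{\rm out};\bar\vx)$ to $\vx^*(\bar\vx,\bar\vy)$ via the H\"older-continuous-solution-map Lemma~\ref{lem:cont-sol-set}, then compares $\vx^*(\bar\vx,\bar\vy)$ to $\vx^{\rm out}$ via the uniformly monotone bound (Lemma~\ref{lem:U-monotone}), producing three error terms scaling as $\zeta_3$, $\zeta_3^{1/2}$, and $\zeta_3^{1/4}$. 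You instead split the residual as the $\vy$-block of the EG residual plus $\nabla_y f(\vx^{\rm out},\vy^{\rm out}) - \nabla_y f(\vx^*(\vy^{\rm out};\bar\vx),\vy^{\rm out})$, and bound $\|\vx^{\rm out}-\vx^*(\vy^{\rm out};\bar\vx)\|$ directly from the $\vx$-block of the EG residual together with the constrained gradient dominance (Lemma~\ref{lem:UC-grad-dominant}) of the $(\gamma/2)$-uniformly convex $g(\,\cdot\,,\vy^{\rm out};\bar\vx)$. This bypasses Lemma~\ref{lem:cont-sol-set} and the detour through the saddle point entirely, and yields only two terms scaling as $\zeta_3$ and $\zeta_3^{1/2}$ --- a slightly cleaner and tighter bound, though both give the same qualitative conclusion that a polynomially small $\zeta_3$ suffices. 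The remaining ingredients (the zeroth- and first-order oracles via $\gM_{\rm min}$ on $g(\,\cdot\,,\vy;\bar\vx)$, which the paper delegates to Theorem~\ref{thm:get-zo-fo}) match the paper's argument.
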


\subsection{Main Result: Accelerate Existing Algorithms} \label{subsec:acc-practice}

From the analysis in the previous section (Theorem \ref{thm:outer}, \ref{thm:middle} and \ref{thm:ANPE-inner}), the total complexity of Minimax-AIPE is proportional to 
\begin{align*}
    \left( \frac{\gamma}{\mu_x} \right)^{2/7} \left( \frac{\gamma}{\mu_y} \right)^{2/7} T_{\rm saddle} (\rho + 2\gamma,\gamma/2) + {\text{the cost involved in } T_{\rm min}},
\end{align*}
where $T_{\rm min}$ and $T_{\rm saddle}$ are the oracle complexity of the algorithm $\gM_{\rm min}$ and $\gM_{\rm saddle}$ defined in Assumption \ref{asm:M-min} and \ref{asm:M-minimax}.  As minimization problems are typically easier to solve than minimax problems, in many practical scenarios the bottleneck of the complexity depends on the first term that involved in $T_{\rm saddle}$, which is exactly the quantity that the best hyper-parameter $\gamma$ should minimize.
Below, we show the acceleration for existing methods with the optimal choice of $\gamma$.

\paragraph{Accelerating Newton Proximal Extragradient to Achieve the $\tilde \gO(\epsilon^{-4/7})$ Complexity.}
If we take $\gM_{\rm saddle} = \text{NPE-restart}$ and $\gM_{\rm min} = \text{ANPE-restart}$, then
it holds $T_{\rm saddle}(\rho, \mu)=(\rho/ \mu)^{2/3}$ 
by Theorem \ref{thm:NPE-restart} and  $T_{\rm min}(\rho, \mu) = (\rho/ \mu)^{2/7}$ by Corollary \ref{cor:ANPE}. 
 Setting $\gamma = \rho$ and plugging in the above values of $T_{\rm min}$ and $T_{\rm saddle}$ gives an upper bound of $\tilde \gO\left( (\rho / \mu_x)^{2/7} (\rho / \mu_y)^{2/7}  \right)$ under Assumption \ref{asm:UC-UC}. 
Then by the reduction in Lemma \ref{lem:reduction-UCUC}, it indicates an upper bound of $\tilde \gO \left( (\rho D_x^3 / \epsilon)^{2/7} (\rho D_y^3 / \epsilon)^{2/7} \right)$
for finding an $\epsilon$-solution, as stated in the following theorem.
\begin{thm} \label{thm:Minimax-AIPE-UCUC}
Under Assumption \ref{asm:D}, \ref{asm:function-lip}, \ref{asm:grad-lip}, \ref{asm:Hess-lip} and \ref{asm:UC-UC}, Algorithm \ref{alg:Minimax-AIPE} with $\gamma =\rho$, $\gM_{\rm min}= \text{ANPE-restart}$, and $\gM_{\rm saddle} = \text{NPE-restart}$ finds an $\epsilon$-solution to Problem (\ref{prob:main}) second-order oracle calls bounded by $\tilde \gO\left( (\rho/\mu_x)^{2/7} (\rho/\mu_y)^{2/7} \right)$.
\end{thm}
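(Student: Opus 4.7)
The plan is to chain the three per-level complexity results, Theorem \ref{thm:outer}, Theorem \ref{thm:middle}, and Theorem \ref{thm:ANPE-inner}, and substitute the specific choices $\gM_{\rm saddle}=\text{NPE-restart}$ and $\gM_{\rm min}=\text{ANPE-restart}$, for which Theorem \ref{thm:NPE-restart} gives $T_{\rm saddle}(\rho,\mu)=(\rho/\mu)^{2/3}$ and Corollary \ref{cor:ANPE} gives $T_{\rm min}(\rho,\mu)=(\rho/\mu)^{2/7}$. First I would fix the precisions: Theorem \ref{thm:outer} requires $\zeta_1\lesssim \mu_y\epsilon^2/(\ell^3 D^2)$; Theorem \ref{thm:middle} and Theorem \ref{thm:ANPE-inner} only require $\zeta_2,\zeta_3 = \Omega(1/\mathrm{poly}(\rho,\ell,L,D,\gamma,\mu_x^{-1},\mu_y^{-1},\zeta_1^{-1}))$, so each of the logarithms $\log(D/\zeta_i)$ is an $\tilde O(1)$ factor that gets absorbed into $\tilde\gO$.

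Next I would count oracle calls by multiplying the per-level bounds. The outer loop invokes Algorithm \ref{alg:Minimax-AIPE-mid} at most $\tilde\gO((\gamma/\mu_x)^{2/7})$ times and additionally runs $\gM_{\rm min}$ for $\tilde\gO(T_{\rm min}(\rho,\mu_y))$ iterations; each middle-loop call invokes Algorithm \ref{alg:Minimax-AIPE-inner} at most $\tilde\gO((\gamma/\mu_y)^{2/7})$ times plus $\tilde\gO(T_{\rm min}(\rho+2\gamma,\gamma/2))$ iterations of $\gM_{\rm min}$; and each inner-loop call costs $\tilde\gO(T_{\rm saddle}(\rho+2\gamma,\gamma/2))$ iterations of $\gM_{\rm saddle}$ and $\tilde\gO(T_{\rm min}(\rho+2\gamma,\gamma/2))$ iterations of $\gM_{\rm min}$. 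Setting $\gamma=\rho$ makes the ratio $(\rho+2\gamma)/(\gamma/2)=6$ an absolute constant, so both $T_{\rm saddle}(\rho+2\gamma,\gamma/2)=\gO(1)$ and $T_{\rm min}(\rho+2\gamma,\gamma/2)=\gO(1)$ under the stated rates.

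Plugging everything in, the dominant contribution is the triple product
\begin{equation*}
\tilde\gO\!\left(\left(\tfrac{\rho}{\mu_x}\right)^{2/7}\left(\tfrac{\rho}{\mu_y}\right)^{2/7}\cdot T_{\rm saddle}(\rho+2\rho,\rho/2)\right)
= \tilde\gO\!\left(\left(\tfrac{\rho}{\mu_x}\right)^{2/7}\left(\tfrac{\rho}{\mu_y}\right)^{2/7}\right),
\end{equation*}
while the side terms involving $T_{\rm min}$ are of strictly lower order because they lack at least one of the two $(\,\cdot\,)^{2/7}$ factors; for instance the outer $\gM_{\rm min}$ tail is only $\tilde\gO((\rho/\mu_y)^{2/7})$, which is dominated by the triple product above. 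Combining these gives the advertised bound.

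The main obstacle I anticipate is not the arithmetic of the composition but the bookkeeping on precisions across the three loops: I must verify that $\zeta_1$ driven by the outer-loop requirement, $\zeta_2$ driven by Assumption \ref{asm:prox-Phi}, and $\zeta_3$ driven by Assumption \ref{asm:prox-Psi} can all be chosen as polynomials in $\rho,\ell,L,D,\mu_x^{-1},\mu_y^{-1},\epsilon^{-1}$ in a consistent top-down manner. Once the polynomial nature is established, every $\log(D/\zeta_i)$ factor is a polylogarithm in the problem parameters and $1/\epsilon$, so the resulting bound is genuinely $\tilde\gO\!\left((\rho/\mu_x)^{2/7}(\rho/\mu_y)^{2/7}\right)$ as claimed.
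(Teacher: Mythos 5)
Your proposal follows essentially the same route as the paper: chain Theorems \ref{thm:outer}, \ref{thm:middle}, and \ref{thm:ANPE-inner}, substitute $T_{\rm saddle}(\rho,\mu)=(\rho/\mu)^{2/3}$ (Theorem \ref{thm:NPE-restart}) and $T_{\rm min}(\rho,\mu)=(\rho/\mu)^{2/7}$ (Corollary \ref{cor:ANPE}), set $\gamma=\rho$ so that $T_{\rm saddle}(3\rho,\rho/2)$ and $T_{\rm min}(3\rho,\rho/2)$ are absolute constants, and absorb all $\log(D/\zeta_i)$ factors into $\tilde\gO(\cdot)$ because each $\zeta_i$ can be taken polynomial in the problem parameters. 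That is precisely what the paper does (it writes the total cost as $(\gamma/\mu_x)^{2/7}(\gamma/\mu_y)^{2/7}\,T_{\rm saddle}(\rho+2\gamma,\gamma/2)$ plus $T_{\rm min}$ terms, then substitutes), so the argument is correct and not a genuinely different proof.

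One minor imprecision worth flagging: you claim the side terms involving $T_{\rm min}$ are ``strictly lower order because they lack at least one of the two $(\cdot)^{2/7}$ factors.'' That is not quite true for all of them. Theorem \ref{thm:middle} says each call to the middle loop also spends $T_{\rm min}(\rho,\mu_y)\log(D/\zeta_2)$ iterations of $\gM_{\rm min}$ to produce the inexact zeroth- and first-order oracles of $\Phi$; with $\gamma=\rho$ and ANPE-restart this is $\tilde\gO\big((\rho/\mu_y)^{2/7}\big)$ per middle-loop call, and multiplying by the $\tilde\gO\big((\rho/\mu_x)^{2/7}\big)$ outer-loop calls gives a contribution of the \emph{same} order $\tilde\gO\big((\rho/\mu_x)^{2/7}(\rho/\mu_y)^{2/7}\big)$ as your ``triple product,'' not a lower one. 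The final bound is unchanged, but the accounting statement should be weakened to ``no higher order'' rather than ``strictly lower order.''
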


With the above analyses, our algorithm achieves the $\tilde \gO(\epsilon^{-4/7})$ complexity for second-order convex-concave minimax optimization, and our result significantly improves the existing $\gO(\epsilon^{-2/3})$ upper bound. The formal statements are presented as follows.
\begin{thm}[Main Theorem] \label{thm:Minimax-AIPE-CC}
Under Assumption \ref{asm:CC}, \ref{asm:D}, \ref{asm:function-lip}, \ref{asm:grad-lip}, \ref{asm:Hess-lip}, 
using the reduction by Lemma \ref{lem:reduction-UCUC}, Algorithm \ref{alg:Minimax-AIPE}, with $\gamma =\rho$ (defined in Hessian Lipchitzness), $\gM_{\rm min}= \text{ANPE-restart}$, and $\gM_{\rm saddle} = \text{NPE-restart}$, can solve the $\epsilon$-regularized minimax function and find an $\epsilon$ -solution to the problem (\ref{prob:main}) with second-order oracle calls bounded by $\tilde \gO\left( D_x^{6/7} D_y^{6/7} (\rho/ \epsilon)^{4/7}\right)$.
\end{thm}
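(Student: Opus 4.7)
The plan is to derive the main theorem by composing two ingredients already established in the excerpt: the reduction from convex-concave to uniformly-convex-uniformly-concave problems (Lemma \ref{lem:reduction-UCUC}), and the complexity of Minimax-AIPE in the uniformly-convex-uniformly-concave regime (Theorem \ref{thm:Minimax-AIPE-UCUC}). The strategy is simply to solve the cubically regularized problem to accuracy $\epsilon/3$ and then translate this into an $\epsilon$-solution of the original problem, while checking that the hypotheses of Theorem \ref{thm:Minimax-AIPE-UCUC} transfer intact to the regularized objective.

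First I would introduce the regularized objective
\begin{align*}
\tilde f(\vx,\vy) = f(\vx,\vy) + \frac{\mu_x}{3}\Vert \vx - \vx_0 \Vert^3 - \frac{\mu_y}{3}\Vert \vy - \vy_0\Vert^3,
\end{align*}
with $\mu_x = \epsilon/(2 D_x^3)$ and $\mu_y = \epsilon/(2 D_y^3)$ as prescribed by Lemma \ref{lem:reduction-UCUC}. By Lemma \ref{lem:cubic-func}, each cubic regularizer is uniformly convex of order $3$ and has a $2$-Lipschitz continuous Hessian on any bounded set (after accounting for the domain diameter). Consequently, $\tilde f$ satisfies Assumption \ref{asm:UC-UC} with parameters $\mu_x, \mu_y$, retains convex-concavity of the cross terms, and has $\tilde\rho$-Lipschitz continuous Hessians with $\tilde\rho \le \rho + 2(\mu_x D_x + \mu_y D_y) = \Theta(\rho)$ in the regime $\epsilon \lesssim \rho D^3$. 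The function Lipschitz and gradient Lipschitz constants (Assumptions \ref{asm:function-lip}, \ref{asm:grad-lip}) are likewise inflated only by lower-order terms, so Theorem \ref{thm:Minimax-AIPE-UCUC} applies to $\tilde f$ with the same $\rho$ up to constants.

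Second, by Lemma \ref{lem:reduction-UCUC}, it suffices to produce an $(\epsilon/3)$-solution of $\tilde f$. Running Algorithm \ref{alg:Minimax-AIPE} with $\gamma = \rho$, $\gM_{\rm min} = \text{ANPE-restart}$, and $\gM_{\rm saddle} = \text{NPE-restart}$ on $\tilde f$ yields, by Theorem \ref{thm:Minimax-AIPE-UCUC},
\begin{align*}
\tilde\gO\left(\left(\frac{\rho}{\mu_x}\right)^{2/7}\left(\frac{\rho}{\mu_y}\right)^{2/7}\right)
\end{align*}
second-order oracle calls. Substituting $\mu_x = \epsilon/(2D_x^3)$ and $\mu_y = \epsilon/(2D_y^3)$ gives
\begin{align*}
\tilde\gO\left(\left(\frac{\rho D_x^3}{\epsilon}\right)^{2/7}\left(\frac{\rho D_y^3}{\epsilon}\right)^{2/7}\right) = \tilde\gO\left(D_x^{6/7} D_y^{6/7}\left(\frac{\rho}{\epsilon}\right)^{4/7}\right),
\end{align*}
matching the claimed bound.

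The only nontrivial step is verifying that the assumptions of Theorem \ref{thm:Minimax-AIPE-UCUC} really do transfer to $\tilde f$ without worsening the dependence on $\rho$ or $D$. This is the one place where care is needed, since the cubic regularizer contributes both to the Hessian Lipschitz constant and to the effective gradient/function Lipschitz constants on the bounded domain; however, all of these contributions are controlled by $\mu_x D_x + \mu_y D_y = \gO(\epsilon/D^2)$ or smaller, which is dominated by $\rho$ in the target regime, so the parameters $\ell, L, \rho$ are preserved up to constants. Once this verification is in place, the final complexity follows directly by arithmetic from the two previously established results.
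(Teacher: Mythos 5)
Your proposal is correct and takes essentially the same approach as the paper: the paper proves Theorem~\ref{thm:Minimax-AIPE-UCUC} for the uniformly-convex-uniformly-concave case and then, in the paragraph immediately preceding it, notes that Lemma~\ref{lem:reduction-UCUC} with $\mu_x = \epsilon/(2D_x^3)$, $\mu_y = \epsilon/(2D_y^3)$ yields Theorem~\ref{thm:Minimax-AIPE-CC} by direct substitution. One small slip in your sanity check: by Lemma~\ref{lem:cubic-func} the Hessian Lipschitz constant of the regularizer is $2\mu_x + 2\mu_y$, independent of the diameter, so $\tilde\rho \le \rho + 2\mu_x + 2\mu_y$ rather than $\rho + 2(\mu_x D_x + \mu_y D_y)$ (you appear to have mixed in the inflation of the \emph{gradient} Lipschitz constant); the conclusion $\tilde\rho = \Theta(\rho)$ is unaffected.
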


\paragraph{Accelerating Lazy Extra Newton to Further Reduce the Computational Cost.}
We can further reduce the complexity by involving the idea of lazy Hessian updates \citep{doikov2023second,chen2025computationally}. 
Specifically, we take $\gM_{\rm saddle} = \text{LEN-restart}$ and  $\gM_{\rm min} = \text{ALEN-restart}$ \citep{chen2025computationally}.
Then we know from \citet{chen2025computationally} (see also Appendix \ref{apx:LEN-const} for a brief review) that $T_{\rm saddle}(\rho, \mu) = m+ m^{2/3} (\rho / \mu)^{2/3}$ and $T_{\rm min}(\rho, \mu) = m+ m^{5/7} (\rho/ \mu)^{2/7}$. Setting $\gamma = \rho / \sqrt{m}$ in Minimax-AIPE yields an upper bound of $\tilde \gO\left( m+ m^{5/7} (\rho / \mu_x)^{2/7} (\rho / \mu_y)^{2/7}  \right)$ under Assumption~\ref{asm:UC-UC}, which also indicates a corresponding 
$\tilde \gO\left( m+ m^{5/7} (\rho D_x^3/ \epsilon)^{2/7} (\rho D_y^3 / \epsilon)^{2/7}  \right)$ upper bound for convex-concave minimax problems using the reduction by Lemma \ref{lem:reduction-UCUC}.






\begin{thm} \label{thm:Minimax-AIPE-UCUC-lazy}
Under Assumption \ref{asm:D}, \ref{asm:function-lip}, \ref{asm:grad-lip}, \ref{asm:Hess-lip} and \ref{asm:UC-UC}, Algorithm \ref{alg:Minimax-AIPE} with $\gamma =\rho/\sqrt{m}$, $\gM_{\rm min} = \text{ALEN-restart}$, $\gM_{\rm saddle} = \text{LEN-restart}$ finds an $\epsilon$-solution to Problem (\ref{prob:main}) with 
\begin{align*}
    \tilde \gO\left(m + m^{5/7} (\rho/\mu_x)^{2/7} (\rho/ \mu_y)^{2/7}  \right)
\end{align*}
steps, and the ultimate algorithm only queries Hessians every $m$ steps.
\end{thm}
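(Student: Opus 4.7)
The plan is to invoke the master complexity formula from Section \ref{subsec:acc-practice}
\[
\left( \frac{\gamma}{\mu_x} \right)^{\!2/7} \!\left( \frac{\gamma}{\mu_y} \right)^{\!2/7}\! T_{\rm saddle}(\rho + 2\gamma, \gamma/2) + \text{(cost involving } T_{\rm min}\text{)},
\]
and simply substitute the rates of LEN-restart and ALEN-restart. Specifically, I would first cite the results of \citet{chen2025computationally} to conclude that LEN-restart satisfies Assumption \ref{asm:M-minimax} with $T_{\rm saddle}(\rho,\mu) = m + m^{2/3}(\rho/\mu)^{2/3}$, and that ALEN-restart satisfies Assumption \ref{asm:M-min} with $T_{\rm min}(\rho,\mu) = m + m^{5/7}(\rho/\mu)^{2/7}$. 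Both algorithms amortize Hessian computation to once per $m$ iterations, which immediately justifies the Hessian-query claim for the ultimate algorithm.

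Next, I would balance $\gamma$. Plugging $T_{\rm saddle}(\rho+2\gamma,\gamma/2)$ into the bound and treating $\rho+2\gamma \lesssim \rho$ (which is consistent with the choice we are about to verify), the dominant term becomes
\[
\frac{\gamma^{4/7}}{(\mu_x\mu_y)^{2/7}} \Bigl( m + m^{2/3}(\rho/\gamma)^{2/3} \Bigr).
\]
Balancing the two summands $m \cdot \gamma^{4/7}$ against $m^{2/3}\rho^{2/3}\gamma^{4/7 - 2/3} = m^{2/3}\rho^{2/3}\gamma^{-2/21}$ yields $\gamma^{2/3} = \rho^{2/3}/m^{1/3}$, i.e., $\gamma = \rho/\sqrt{m}$, matching the theorem statement. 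Substituting back and checking that $\rho + 2\gamma = \rho(1 + 2/\sqrt{m}) = \Theta(\rho)$ and $(\rho + 2\gamma)/(\gamma/2) = \Theta(\sqrt{m})$, so $T_{\rm saddle}(\rho+2\gamma,\gamma/2) = \tilde \gO(m)$, and the saddle-point contribution becomes $\tilde \gO\!\bigl( m^{5/7}(\rho/\mu_x)^{2/7}(\rho/\mu_y)^{2/7} \bigr)$.

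The remaining routine task is to verify that the $T_{\rm min}$-related terms from Theorems \ref{thm:outer}, \ref{thm:middle}, and \ref{thm:ANPE-inner} (needed for computing the inexact zeroth- and first-order oracles of $\Phi$ and $\Psi$) only contribute at lower order. For ALEN-restart with $\mu \in \{\mu_x, \mu_y, \gamma/2\}$ and the corresponding Lipschitz constants, each such term is bounded by $\tilde \gO\!\bigl( m + m^{5/7}(\rho/\min(\mu_x,\mu_y))^{2/7} \bigr)$, which is absorbed into the leading bound. The choices of inexactness $\zeta_1, \zeta_2, \zeta_3 = \Omega(1/\mathrm{poly}(\cdot,\epsilon))$ only contribute $\mathrm{polylog}$ factors hidden in $\tilde\gO(\cdot)$, so this is a bookkeeping step rather than a substantive obstacle.

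The main obstacle I expect is not any new analytical insight, since the framework of Section~\ref{subsec:main} does the heavy lifting, but rather the careful verification that (i) the hypotheses of Theorems~\ref{thm:outer}-\ref{thm:ANPE-inner} are actually satisfied when $\gM_{\rm saddle}$ and $\gM_{\rm min}$ are replaced by their lazy counterparts -- in particular, that LEN-restart guarantees iterate convergence $\|\vz-\vz^*\|\le \zeta$ (not merely a gap bound) with the claimed $T_{\rm saddle}$; and (ii) that when the inner subproblems are instantiated with the added cubic regularizers, the Hessian-Lipschitz constant of the regularized problem is $\rho + 2\gamma$ (using Lemma~\ref{lem:cubic-func}) so that the lazy-Hessian rates apply with the correct constants. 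Once these compatibility checks are in place, the theorem follows by plugging in and simplifying.
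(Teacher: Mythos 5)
Your proposal is correct and mirrors the paper's own argument: cite the LEN/ALEN rates $T_{\rm saddle}(\rho,\mu)=m+m^{2/3}(\rho/\mu)^{2/3}$ and $T_{\rm min}(\rho,\mu)=m+m^{5/7}(\rho/\mu)^{2/7}$ from \citet{chen2025computationally}, plug them into the master bound from Section~\ref{subsec:acc-practice}, and tune $\gamma=\rho/\sqrt m$ to balance. The one compatibility point you flag as a possible obstacle (whether LEN-restart yields iterate convergence $\|\vz-\vz^*\|\le\zeta$ in the constrained setting so Assumption~\ref{asm:M-minimax} holds) is exactly what the paper handles in Appendix~\ref{apx:LEN-const}, where Theorem~\ref{thm:LEN-restart} re-derives the LEN-restart guarantee for constrained $\gZ$; so this is a bookkeeping check rather than a gap.
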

Then by applying similar arguments, we get the theorem below where we achieve accelerated rates than the original LEN method \citep{chen2025computationally}.
\begin{thm} \label{thm:Minimax-AIPE-CC-lazy}
Under Assumption \ref{asm:CC}, \ref{asm:D}, \ref{asm:function-lip}, \ref{asm:grad-lip}, \ref{asm:Hess-lip}, Algorithm \ref{alg:Minimax-AIPE} with $\gamma =\rho/\sqrt{m}$, $\gM_{\rm min} = \text{ALEN-restart}$, $\gM_{\rm saddle} = \text{LEN-restart}$ finds an $\epsilon$-solution to Problem (\ref{prob:main}) with 
\begin{align*}
    \tilde \gO\left(m + m^{5/7} (D_x^3 \rho /\epsilon)^{2/7} (D_y^3 \rho / \epsilon)^{2/7}  \right)
\end{align*}
steps, and the ultimate algorithm only queries Hessians every $m$ steps.
\end{thm}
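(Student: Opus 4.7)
The proof follows the same reduction-and-plug-in template used to derive Theorem~\ref{thm:Minimax-AIPE-CC} from Theorem~\ref{thm:Minimax-AIPE-UCUC}, but now instantiated with the lazy-Hessian guarantee of Theorem~\ref{thm:Minimax-AIPE-UCUC-lazy}. The plan is to first regularize the convex-concave problem into a uniformly-convex-uniformly-concave instance via Lemma~\ref{lem:reduction-UCUC}, then invoke the lazy accelerated minimax complexity bound on this regularized problem, and finally pick the regularization strengths in terms of $\epsilon$, $D_x$, $D_y$ to recover the advertised rate.

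Concretely, I would define
\[\tilde f(\vx, \vy) := f(\vx,\vy) + \frac{\mu_x}{3}\Vert \vx - \vx_0\Vert^3 - \frac{\mu_y}{3}\Vert \vy - \vy_0\Vert^3,\]
with the choice $\mu_x = \epsilon/(2D_x^3)$ and $\mu_y = \epsilon/(2D_y^3)$ prescribed by Lemma~\ref{lem:reduction-UCUC}. By Lemma~\ref{lem:cubic-func}, each cubic regularizer has $2$-Lipschitz Hessian, so $\tilde f$ has Hessian-Lipschitz constant $\tilde\rho \le \rho + 2(\mu_x + \mu_y)$. Since $\mu_x$ and $\mu_y$ scale linearly with $\epsilon$, one has $\tilde\rho = \Theta(\rho)$ in the non-trivial regime $\epsilon \le \rho D^3$; similarly the gradient-Lipschitz and function-Lipschitz constants on the compact domain $\gX \times \gY$ only change by constant factors. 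Thus $\tilde f$ still satisfies Assumptions~\ref{asm:D}, \ref{asm:function-lip}, \ref{asm:grad-lip}, \ref{asm:Hess-lip}, while additionally being $\mu_x$-uniformly-convex and $\mu_y$-uniformly-concave (Assumption~\ref{asm:UC-UC}).

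Next I would run Algorithm~\ref{alg:Minimax-AIPE} on $\tilde f$ with the stated choices $\gamma = \tilde\rho/\sqrt{m} = \Theta(\rho/\sqrt m)$, $\gM_{\rm min} = \text{ALEN-restart}$, $\gM_{\rm saddle} = \text{LEN-restart}$, targeting accuracy $\epsilon/3$. Theorem~\ref{thm:Minimax-AIPE-UCUC-lazy} then yields an $(\epsilon/3)$-solution of $\tilde f$ within
\[\tilde \gO\!\left(m + m^{5/7}(\tilde\rho/\mu_x)^{2/7}(\tilde\rho/\mu_y)^{2/7}\right)\]
iterations, while only querying Hessians every $m$ steps. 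Substituting $\mu_x = \epsilon/(2D_x^3)$, $\mu_y = \epsilon/(2D_y^3)$, and $\tilde\rho = \Theta(\rho)$ immediately gives
\[\tilde \gO\!\left(m + m^{5/7}(D_x^3 \rho/\epsilon)^{2/7}(D_y^3 \rho/\epsilon)^{2/7}\right).\]
Finally, Lemma~\ref{lem:reduction-UCUC} converts this $(\epsilon/3)$-solution of $\tilde f$ into an $\epsilon$-solution of Problem~(\ref{prob:main}), completing the bound.

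The main piece of bookkeeping — not really an obstacle — is checking that adding cubic terms on a bounded domain does not degrade the effective smoothness parameters. This is benign because the extra gradient- and Hessian-Lipschitz contributions are of order $\mu_x D_x^2 + \mu_y D_y^2 = O(\epsilon/D)$ and $\mu_x + \mu_y = O(\epsilon/D^3)$, both dominated by $\ell$ and $\rho$ in the regime where the $\tilde\gO$ bound is informative. Consequently, the optimal choice $\gamma = \rho/\sqrt m$ remains balanced, and the bound of Theorem~\ref{thm:Minimax-AIPE-UCUC-lazy} transfers verbatim, absorbing all such factor changes into the constants and logarithmic terms hidden in $\tilde\gO$.
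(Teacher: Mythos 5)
Your proof takes the same route as the paper: reduce to a uniformly-convex–uniformly-concave instance via Lemma~\ref{lem:reduction-UCUC} with $\mu_x = \epsilon/(2D_x^3)$, $\mu_y = \epsilon/(2D_y^3)$, invoke Theorem~\ref{thm:Minimax-AIPE-UCUC-lazy} on the regularized objective, and substitute. The extra bookkeeping you add about the Lipschitz constants of the regularized function being $\Theta(\rho)$, $\Theta(\ell)$, $\Theta(L)$ on the compact domain is correct and is indeed the only thing one needs to check; the paper leaves this implicit.
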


\section{Conclusion and Future Works}

In this paper, we propose the Minimax-AIPE algorithm for convex-concave minimax problems. Our theoretical result shows our proposed method can achieve the convergence rate of $\tilde \gO(\epsilon^{-4/7})$, which refutes the common conjecture that the optimal rate for this setting is $\Theta(\epsilon^{-2/3})$. Our framework is also compatible with lazy Newton methods, and it yields an upper bound of $\tilde \gO(m +m^{5/7} \epsilon^{-4/7})$ by reusing Hessians every $m$ iterations. We discuss some potential future directions as follows.

\paragraph{Shaving Logarithmic Factors in our Upper Bound} 
Our upper bound includes logarithmic factors is $\gO( \epsilon^{-4/7} \log^3(\epsilon^{-1}))$. The logarithmic factors may be possibly removed using techniques to achieve optimal first-order oracle complexity in strongly-convex-strongly-concave problems \citep{kovalev2022firstSC}. 

\paragraph{More Acceleration Results}
Our framework can be applied to accelerate any existing linear convergent algorithms for uniformly-convex-uniformly-concave functions (Assumption \ref{asm:M-minimax}).  Potentially, it can be used to accelerate quasi-Newton methods 
\citep{jiang2023online}. But it requires additional analysis to adapt the result of \citep{jiang2023online} from strongly-convex-strongly-concave functions to uniformly-convex-uniformly-concave cases. 
It is also possible to use our framework to accelerate stochastic algorithms  \citep{chayti2023unified,lin2022explicit,wang2019stochastic,zhou2019stochastic,tripuraneni2018stochastic} like in the original Catalyst method \citep{lin2018catalyst}.

\paragraph{Lower Bounds} It would also be important for future works to establish lower bounds for this problem. The key is to carefully design a zero-chain that couples the worse-case instances for both variables $\vx$ and $\vy$. Although tight lower bounds have been proved for first-order minimax optimization \citep{ouyang2021lower,xie2020lower,zhang2022lower,li2021complexity}, this problem remains open for second-order methods.

\section*{Acknowledgment}
Jingzhao Zhang is supported by National Key R\&D Program of China 2024YFA1015800 and Shanghai Qi Zhi Institute Innovation Program. 
Luo Luo is supported by the Major Key Project of PCL under Grant PCL2024A06, the National Natural Science Foundation of China (No. 62206058), and Shanghai Basic Research Program (23JC1401000).
Chengchang Liu is supported by the National Natural Science Foundation of China (624B2125). 

\bibliography{sample}

\newpage
\appendix

\section{Discussions on the Existing $\Omega(\epsilon^{-2/3})$ Lower Bound} \label{apx:diss-lower}

\citet{adil2022optimal} provided an $\Omega(\epsilon^{-2/3})$ lower bound on the second-order oracle of the primal function $\Phi(x) = \max_{\vy \in \gY} f(\vx,\vy)$. 
However, practical algorithms (including our algorithm and \citet{adil2022optimal}'s algorithm)
use information of $f(\vx,\vy)$ instead of $\Phi(\vx)$. 
It is clear that the oracle of $\Phi(\vx)$ is quite different from the oracle of $f(\vx,\vy)$.

\citet{lin2022perseus} adapted the hard instance of \citep{adil2022optimal} to establish a lower bound with the second-order oracle of $f(\vx,\vy)$.
They showed that there exists a $\rho$-Hessian smooth function, such that the output of any second-order method that queries no more than $T$ second-order oracles of $f(\vx,\vy)$ must has duality gap of $\Omega(\rho D_x D_y^2 / T^{3/2})$ \citep[Theorem 3.11]{lin2022perseus}.
However, the diameters of sets $\gX$ and $\gY$ in their construction do depend on $T$. Specifically, their hard instance requires taking $D_x = \Theta(T^{3/2})$ and $D_y = \Theta(T^{1/2})$.
Hence, their theorem cannot directly give a lower bound $\Omega(\epsilon^{-2/3})$ when $D_x$ and $D_y$ are constants. 
Furthermore, we find
that, unlike the case of first-order methods \citep{zhang2022lower}, scaling down the diameters $D_x$ and $D_y$ in the hard instance of minimax problems is not easy, because the analysis heavily relies on their specific $\gX$ and $\gY$. 
Therefore, the existing lower bounds do not apply to our algorithm.

\section{Proofs in Section \ref{sec:AIPE}}

\subsection{Proof of Lemma \ref{lem:CRN-is-MS}}


\begin{proof}
We prove a general result that includes both minimax and minimization problems.
    From the first-order optimality condition of the CRN oracle (Definition \ref{dfn:cubic-VI}), we know that $\vz = {\rm CRN}(\bar \vz, 2\rho)$ satisfies
\begin{align} \label{eq:close-CRN}
    \vu :=  -\left(\mF(\bar\vz) + \nabla \mF(\bar \vz) (\vz - \bar \vz) +  \lambda
    (\vz- \bar \vz) \right) \in 
    \begin{bmatrix}
    \partial \gI_{\gX}(\vx) \\
    -\partial \gI_{\gY}(\vy)
\end{bmatrix},
\end{align}
where $\lambda = \rho \Vert \vz- \bar \vz \Vert$.
Then using Assumption \ref{asm:Hess-lip}, we have that
\begin{align} \label{eq:CRN-MS}
    \Vert \mF(\vz) + \vu + \lambda (\vz - \bar \vz) \Vert &\le \Vert \mF(\vz) - \mF(\bar \vz) - \nabla \mF(\bar \vz) (\vz - \bar \vz) \Vert \le \frac{\rho}{2} \Vert  \vz- \bar \vz \Vert^2,
\end{align}
which satisfies a $(0, \rho )$-proximal oracle according to Definition \ref{dfn:inexact-MS-oracle}.
\end{proof}

\subsection{Proof of Theorem \ref{thm:ANPE-restart}}

\begin{proof}
By Theorem \ref{thm:ANPE}, each epoch of Algorithm \ref{alg:ANPE-restart} ensures $\Vert \vz^{(s+1)}- \vz^* \Vert \le \frac{1}{2} \Vert \vz^{(s)}- \vz^* \Vert$ if setting $T = \gO\left((\gamma/ \mu)^{2/7}\right)$. And therefore 
the algorithm finds a point $\vz^{(S)}$ such that $\Vert \vz^{(S)} - \vz^* \Vert \le \epsilon$ in $S = \left \lceil \log_2 (d_0 /\epsilon) \right \rceil$ epochs.
\end{proof}


\begin{thm} \label{thm:ANPE}
Under the same setting as Theorem \ref{thm:ANPE-restart}, running Algorithm \ref{alg:ANPE} finds a point $\vz_t$ such that $\Vert\vz_t - \vz^*\Vert \le 2 \epsilon$ in $T= \gO\left( \left ({\gamma d_0^3}/{(\mu \epsilon^3)} \right)^{2/7}  \right)$ iterations if $\delta \le {\mu \epsilon^4}/{(144 D^2)}$, where $d_0 = \Vert \vz_0 - \vz^*\Vert$.
\end{thm}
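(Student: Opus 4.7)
The plan is to adapt the potential-function argument of the exact search-free A-NPE method of \citet{carmon2022optimal} (itself a variant of the A-HPE framework of \citet{monteiro2013accelerated}) to accommodate the three sources of inexactness marked in Algorithm~\ref{alg:ANPE}: the $(\delta,\gamma)$-proximal oracle at Lines~\ref{Line:iprox-ini} and \ref{Line:iprox}, the $\delta$-gradient oracle at Line~\ref{Line:igrad}, and the $\delta$-function oracle at Line~\ref{Line:ifunc}. The first step is to prove an inexact one-step descent of the standard A-HPE potential,
\begin{equation*}
A_{t+1}\bigl(h(\tilde\vz_{t+1})-h(\vz^*)\bigr) + \tfrac{1}{2}\|\vv_{t+1}-\vz^*\|^2 \le A_t\bigl(h(\vz_t)-h(\vz^*)\bigr) + \tfrac{1}{2}\|\vv_t-\vz^*\|^2 + \mathrm{Err}_{t+1},
\end{equation*}
where $\mathrm{Err}_{t+1}$ collects the slack from the perturbed proximal residual (controlled by Definition~\ref{dfn:inexact-MS-oracle}) and from replacing $\nabla h(\tilde\vz_{t+1})$ by $\vg_{t+1}$. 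A routine Cauchy--Schwarz and convexity computation, mirroring the exact case, shows $\mathrm{Err}_{t+1} = O(\delta\,a_{t+1}\,D)$. Telescoping over $t$ and using $\|\vz_0-\vz^*\|\le d_0\le D$ yields the inexact A-HPE rate
\begin{equation*}
h(\tilde\vz_T) - h(\vz^*) \le \frac{d_0^2}{2 A_T} + O(\delta D).
\end{equation*}

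Next I would import the $A_T$-growth estimate of \citet{carmon2022optimal}, which shows that the adaptive search-free rule drives $A_T$ up at a $T^{7/2}$-rate (up to factors in $\gamma$ and $d_0$) as long as the $\lambda_{t+1}$'s stay bounded below. Because this can fail once the iterates enter a small neighborhood of $\vz^*$, I would restrict the analysis to $t\le T_\epsilon$, where $T_\epsilon$ is the first iteration at which $\|\tilde\vz_t-\vz^*\|\le 2\epsilon$ (this is precisely the stopping time implicit in the $\arg\min$ at Line~\ref{Line:ifunc}). For $t<T_\epsilon$, $\mu$-uniform convexity (Lemma~\ref{lem:UC-grad-dominant}) gives the lower bound $h(\tilde\vz_t)-h(\vz^*)\ge \mu(2\epsilon)^3/3$, which combined with the Lyapunov inequality forces the a priori upper bound $A_{T_\epsilon} = O(D^2/\epsilon)$. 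Plugging this back into the inexactness requirement $\delta\cdot A_{T_\epsilon}\,D\ll \mu\epsilon^3$ reproduces the hypothesis $\delta\le \mu\epsilon^4/(144 D^2)$ stated in the theorem.

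Combining the inexact rate with the growth estimate, the condition $h(\tilde\vz_T)-h(\vz^*)\le 8\mu\epsilon^3/3$ (which, by Lemma~\ref{lem:UC-grad-dominant}, is sufficient for $\|\tilde\vz_T-\vz^*\|\le 2\epsilon$) is achieved as soon as $A_T \gtrsim d_0^2/(\mu\epsilon^3)$; inverting the $A_T\gtrsim T^{7/2}$ growth gives exactly $T=\gO\bigl((\gamma d_0^3/(\mu\epsilon^3))^{2/7}\bigr)$ after tracking how $\gamma$ and $d_0$ enter the constant of $A_T$. The final output rule on Line~\ref{Line:ifunc} uses only $\delta$-accurate function values, which shifts the selected minimizer by at most $2\delta$ and is absorbed into the same $\delta$-budget.

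The main technical obstacle is the interaction between the inexactness and the adaptive branching in Lines~12--21: the rule rescales both $a_{t+1}$ and $\vz_{t+1}$ based on the perturbed $\lambda_{t+1}=\gamma\|\tilde\vz_{t+1}-\bar\vz_t\|$, and one must show that (i)~the telescoping structure of the Lyapunov descent is preserved across both branches, (ii)~the invariant $\lambda_{t+1}\in[\tfrac{1}{2}\lambda_{t+1}',2\lambda_{t+1}']$ that powers the $T^{7/2}$-growth of $A_T$ continues to hold on average despite the proximal error, and (iii)~the stopping time $T_\epsilon$ is well-defined and analyzable from the perturbed trajectory. Handling this bookkeeping is the delicate heart of the argument; the rest of the proof is a careful but standard accounting of how $\delta$ propagates through the estimate sequence.
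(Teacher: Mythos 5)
Your overall plan matches the paper's: establish an inexact one-step Lyapunov descent, telescope up to the stopping time $T_\epsilon$ (the first iterate entering an $\epsilon$-neighborhood), use $\mu$-uniform convexity to lower-bound $E_t := h(\vz_t)-h(\vz^*)$ for $t<T_\epsilon$, import Carmon et al.'s $A_T\gtrsim T^{7/2}$ growth, and then invert. That part is sound.

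There is, however, a genuine gap in how you produce the a priori bound on $A_{T_\epsilon}$. You write that $\mu$-uniform convexity ``combined with the Lyapunov inequality forces'' $A_{T_\epsilon}=O(D^2/\epsilon)$ (sic; the correct order should be $O(D^2/(\mu\epsilon^3))$). But the Lyapunov inequality, combined with $E_t>\mu\epsilon^3/3$, only controls $A_t$ for \emph{strictly} $t<T_\epsilon$ — i.e., it bounds $A_{T_\epsilon-1}$. At $t=T_\epsilon$ the suboptimality $E_{T_\epsilon}$ has already dropped, so $A_{T_\epsilon}E_{T_\epsilon}\le 2R_0$ says nothing about the size of $A_{T_\epsilon}$ itself. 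Since the accumulated inexactness budget $\sum_{j<T_\epsilon}\delta_{j+1}$ scales with $A_{T_\epsilon}$ (because $\sum_{j<T_\epsilon}a_{j+1}=A_{T_\epsilon}$), leaving $a_{T_\epsilon}$ unbounded would break the whole error control. The paper closes this gap by bounding $a_{T_\epsilon}'$ through the update rule $A_{t}'=2\lambda_{t}'(a_{t}')^2$ together with a separate \emph{lower} bound $\lambda_{T_\epsilon}'\gtrsim \mu\epsilon^3/D^2$, which in turn requires analyzing the $\lambda$-halving/doubling rule and the inexact prox residual (Definition~\ref{dfn:inexact-MS-oracle}) to show $\lambda_t\ge \tfrac{2}{3\|\tilde\vz_t-\bar\vz_{t-1}\|}\bigl(\tfrac{h(\tilde\vz_t)-h(\vz^*)}{\|\tilde\vz_t-\vz^*\|}-\delta\bigr)$ stays above $\mu\epsilon^3/(9D^2)$ for $t<T_\epsilon$. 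This step is essential and is missing from your outline.

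Two further bookkeeping issues would need fixing: (i) the intermediate requirement ``$\delta A_{T_\epsilon}D\ll\mu\epsilon^3$'' does not yield $\delta\le\mu\epsilon^4/(144D^2)$ under either of the $A_{T_\epsilon}$ estimates you or the paper use; the paper instead imposes $\delta A_{T_\epsilon}\le d_0/8$ and combines it with the trivial bound $d_0\ge\epsilon$ and $A_{T_\epsilon}\lesssim D^2/(\mu\epsilon^3)$ to get the stated threshold; (ii) the per-step error actually appearing is $\delta_{t+1}=a_{t+1}\delta\|\vv_t-\vz^*\|+2a_{t+1}^2\delta^2$, and to control $\|\vv_t-\vz^*\|$ the paper runs a separate self-bounding argument showing $\max_j\|\vv_j-\vz^*\|\le(1+5c)d_0$ rather than simply invoking $\|\vv_t-\vz^*\|\le D$ — this tighter bound is what lets the accumulated error fold into a benign $2R_0$ rather than inflating constants in the main rate.
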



\begin{proof}
Some of our proof of error tolerance properties is motivated by the proof of \citep[Theorem 7]{bubeck2019complexity}. Compare to \citep{bubeck2019complexity}, our analysis is more simple as we do not need to analyze the additional line search with inexact proximal oracles as \citep[Section E]{bubeck2019complexity}. Moreover, our analysis tackles the constrained case, but \citep{bubeck2019complexity} only considers the unconstrained case, \textit{i.e.} $\gZ = \sR^d$. 
We define $E_t:= h(\vz_t) - h(\vz^*)$, $R_t:= \frac{1}{2} \Vert \vv_t - \vz^* \Vert^2$ and $N_{t+1}:= \frac{1}{2} \Vert \tilde \vz_{t+1} - \bar \vz_t \Vert^2$. 
From Lemma \ref{lem:UC-grad-dominant}, to find a point $\Vert \vz_t - \vz^* \Vert \le \epsilon$ it suffices to find $E_t \le {\mu \epsilon^3}/{3}$.
Let ${\rm Proj}_{\gZ}(\bar \vz ) = \arg \min_{\vz \in \gZ} \Vert \vz - \bar \vz \Vert$ be the projection operator of $\vz$ onto the set $\gZ$. The update rule of $\vv_t$ gives
\begin{align} \label{eq:MS-1}
\begin{split}
    R_{t+1} &= \frac{1}{2} \left \Vert {\rm Proj}_{\gZ} (\vv_t - a_{t+1}  (\vg_{t+1} + \vu_{t+1})) - \vz^* \right \Vert^2 \\
    &\le \frac{1}{2} \left \Vert (\vv_t - a_{t+1} (\vg_{t+1} + \vu_{t+1})) - \vz^* \right \Vert^2 \\
    &= R_t + a_{t+1} \langle \vg_{t+1} + \vu_{t+1}, \vz^* - \vv_t \rangle + \frac{a_{t+1}^2}{2} \Vert \vg_{t+1} + \vu_{t+1} \Vert^2 \\
    &\le  R_t + a_{t+1} \langle  \nabla h(\tilde \vz_{t+1}) + u_{t+1}, \vz^* - \vv_t \rangle + a_{t+1}^2 \Vert \nabla h(\tilde \vz_{t+1}) + \vu_{t+1} \Vert^2 \\
    &\quad + a_{t+1} \delta \Vert \vv_t - \vz^* \Vert + a_{t+1}^2 \delta^2.
\end{split}
\end{align}
By the update rule of $\bar \vz_t = \frac{A_t}{A_{t+1}'} \vz_t + \frac{a_{t+1}'}{A_{t+1}'} \vv_t $ and $A_{t+1}' = A_t + a_{t+1}'$, we  have
\begin{align*}
    a_{t+1}' \vv_t= A_{t+1}' \bar \vz_t  - A_t \vz_t = a_{t+1}' \tilde \vz_{t+1} + A_t (\vz_t - \tilde \vz_{t+1}) + A_{t+1}' (\bar \vz_t - \tilde \vz_{t+1}) .
\end{align*}
Then subtracting $a_{t+1}' \vz^*$ and taking inner product with $ \nabla h(\tilde \vz_{t+1}) + \vu_{t+1} $ yields
\begin{align}\label{eq:to-plug}
\begin{split} 
    &\quad a_{t+1}' \langle \nabla h(\tilde \vz_{t+1}) + \vu_{t+1}, \vz^* - \vv_t \rangle \\
&= \langle \nabla h(\tilde \vz_{t+1}) + \vu_{t+1}, a_{t+1}'  ( \vz^*-  \tilde \vz_{t+1}) + A_t (\vz_t - \tilde \vz_{t+1})  + A_{t+1}' (\tilde \vz_{t+1}- \bar \vz_t) \rangle  \\
&\le a_{t+1}' ( h(\vz^*) - h(\tilde \vz_{t+1})  )  + A_t (h(\vz_t) - h(\tilde \vz_{t+1})) + A_{t+1}' \langle \nabla h(\tilde \vz_{t+1}) + \vu_{t+1}, \tilde \vz_{t+1} - \bar \vz_t \rangle,
\end{split}
\end{align}
where in the last step we use the convexity of $h$ and the fact that $\vu_{t+1} \in \partial \gI_{\gZ}(\tilde \vz_{t+1})$.
We continue to upper bound the inner product term with 
\begin{align*}
     &\quad \langle \nabla h(\tilde \vz_{t+1}) + \vu_{t+1}, \tilde \vz_{t+1} - \bar \vz_t \rangle \\
     &=\frac{1}{2 \lambda_{t+1}} \Vert \nabla h(\tilde \vz_{t+1}) + \vu_{t+1} + \lambda_{t+1} (\tilde \vz_{t+1} - \bar \vz_t) \Vert^2 \\
     &\quad - \frac{1}{2 \lambda_{t+1} } \Vert \nabla h(\tilde \vz_{t+1}) + \vu_{t+1} \Vert^2  - \frac{\lambda_{t+1}}{2} \Vert \tilde \vz_{t+1} - \bar \vz_t \Vert^2.
\end{align*}
Substituting this upper bound into Eq. (\ref{eq:to-plug}) yields
\begin{align*}
    &\quad a_{t+1}' \langle \nabla h(\tilde \vz_{t+1}) + \vu_{t+1}, \vz^* - \vv_t \rangle \\
    &\le a_{t+1}' ( h(\vz^*) - h(\tilde \vz_{t+1})  )  + A_t (h(\vz_t) - h(\tilde \vz_{t+1})) - \frac{A_{t+1}'}{2 \lambda_{t+1} } \Vert \nabla h(\tilde \vz_{t+1}) + \vu_{t+1} \Vert^2 \\
    &\quad + \frac{A_{t+1}'}{2 \lambda_{t+1}} \Vert \nabla h(\tilde \vz_{t+1}) + \vu_{t+1} + \lambda_{t+1} (\tilde \vz_{t+1} - \bar \vz_t) \Vert^2  - \frac{A_{t+1}' \lambda_{t+1}}{2} \Vert \tilde \vz_{t+1} - \bar \vz_t \Vert^2.
\end{align*}
Furthermore, 
by Definition \ref{dfn:inexact-MS-oracle}, we have that 
\begin{align} \label{eq:MS-2}
\begin{split}
     &\quad a_{t+1}' \langle \nabla h(\tilde \vz_{t+1}) + \vu_{t+1}, \vz^* - \vv_t \rangle \\
    &\le a_{t+1}' ( h(\vz^*) - f(\tilde \vz_{t+1})  )  + A_t (h(\vz_t) - h(\tilde \vz_{t+1})) \\
    &\quad - \frac{A_{t+1}'}{2 \lambda_{t+1} } \Vert \nabla h(\tilde \vz_{t+1}) + \vu_{t+1} \Vert^2 - \frac{3}{4} \lambda_{t+1} A_{t+1}'  N_{t+1} + \frac{A_{t+1}'}{2 \lambda_{t+1}} \delta^2.
\end{split}
\end{align}
Now we separately consider the two cases $\lambda_{t+1} \le \lambda_{t+1}'$ (\textit{Case I}) and $\lambda_{t+1} > \lambda_{t+1}'$ (\textit{Case II}). In the first case, we have that $\vz_{t+1} = \tilde \vz_{t+1} $, $a_{t+1} = a_{t+1}'$, $A_{t+1} = A_{t+1}'$ and $A_{t+1} = 2 \lambda_{t+1}' a_{t+1}^2$. Then we can directly sum up Eq. (\ref{eq:MS-1}) and Eq. (\ref{eq:MS-2}) to obtain that
\begin{align*}
\textit{Case I:} & \quad A_{t+1} E_{t+1} + D_{t+1} + \frac{3}{4} \lambda_{t+1} A_{t+1} N_{t+1} \\
&\le A_t E_t + D_t + a_{t+1} \delta \Vert \vv_t - \vz^* \Vert + 2 a_{t+1}^2 \delta^2.
\end{align*}
In the second case, we have that $A_{t+1} = (1-\gamma_{t+1}) A_t + \gamma_{t+1} A_{t+1}'$ with $\gamma_{t+1}= \lambda_{t+1}' / \lambda_{t+1}$. By the convexity of $h$, we  have that
\begin{align*}
    A_{t+1} E_{t+1} &\le (1- \gamma_{t+1}) A_t E_t + \gamma_{t+1} A_{t+1}' (h(\tilde \vz_{t+1} ) - h(\vz^*)).
\end{align*}
We use Eq. (\ref{eq:MS-2}) multiplied by $\gamma_{t+1}$ to upper bounding the above inequality as
\begin{align} \label{eq:MS-3}
\begin{split}
       A_{t+1} E_{t+1} &\le A_t E_t + a_{t+1} \langle \nabla h(\tilde \vz_{t+1}) + \vu_{t+1}, \vz^* - \vv_t \rangle \\
    &- \frac{3}{4} \lambda_{t+1}' A_{t+1}' N_{t+1} - \frac{\gamma_{t+1} A_{t+1}'}{2 \lambda_{t+1}} \Vert \nabla h(\tilde \vz_{t+1}) + \vu_{t+1} \Vert^2 + \frac{\gamma_{t+1} A_{t+1}'}{2 \lambda_{t+1}} \delta^2, 
\end{split}
\end{align}
where we also use facts $ \gamma_{t+1} a_{t+1}' = a_{t+1}$ and $\lambda_{t+1}' = \gamma_{t+1} \lambda_{t+1}$. Note that
\begin{align*}
    \frac{\gamma_{t+1} A_{t+1}'}{2 \lambda_{t+1}} = \frac{\gamma_{t+1}^2 A_{t+1}' }{\lambda_{t+1}'} = (\gamma_{t+1} a_{t+1}') = a_{t+1}^2.
\end{align*}
Summing up Eq. (\ref{eq:MS-1}) and Eq. (\ref{eq:MS-3}) yields that
\begin{align*}
    \textit{Case II:} &\quad A_{t+1} E_{t+1} + D_{t+1} + \frac{3}{4} \lambda_{t+1}' A_{t+1}' N_{t+1} \\
    &\le A_t E_t + D_t + a_{t+1} \delta \Vert \vv_t - \vz^* \Vert + 2 a_{t+1}^2 \delta^2.
\end{align*}
The inequality for both Case I and Case II can be unified as
\begin{align} \label{eq:MS-unify}
    A_{t+1} E_{t+1} +R_{t+1} + \frac{3}{4} \min\{ \lambda_{t+1}, \lambda_{t+1}'\} A_{t+1}' N_{t+1} \le A_t E_t + R_t + \delta_{t+1},
    \end{align}
where $\delta_{t+1}:=  a_{t+1} \delta \Vert \vv_t - \vz^* \Vert + 2 a_{t+1}^2 \delta^2$.
Let $T_{\epsilon}$ be the first time that the algorithm achieves a point $\vz \in\{\vz_t, \tilde \vz_{t} \}$  such that $h(\vz) - h(\vz^*) \le {\mu \epsilon^3}/{3} $, \textit{i.e.} $T_{\epsilon} = \arg \min_{t \ge 0} \{ h(\vz) - h(\vz^*) \le \mu \epsilon^3/3, ~ \vz \in \{\vz_t, \tilde \vz_t \} \}$. 
We let $A_{T_{\epsilon}} \delta \le c \Vert \vz_0 - \vz^* \Vert$ for some constant $c>0$, and we will specify the value of $c$ later. From Eq. (\ref{eq:MS-unify}) and $A_t = \sum_{j=0}^t a_t$ we know that
\begin{align*}
    \frac{1}{2} \Vert \vv_t - \vz^* \Vert^2 \le \frac{1}{2} \Vert \vz_0 - \vz^* \Vert^2 + c \Vert \vz_0 - \vz^* \Vert \max_{0 \le j \le t} \Vert \vv_j - \vz^* \Vert + 2 c^2 \Vert \vz_0 - \vz^* \Vert^2.
\end{align*}
Since the above inequality holds for any $0 \le t \le T$, it must hold for $t_m \in \arg \max_{0 \le j \le T} \Vert \vv_j - \vz^* \Vert$. Therefore, we have that
\begin{align*}
    \Vert v_{t_m} - \vz^* \Vert^2 \le (1+  4 c^2 )\Vert \vz_0 - \vz^* \Vert^2 + 2 c \Vert \vz_0 - \vz^* \Vert \Vert \vv_{t_m} - \vz^* \Vert.
\end{align*}
Solving the above quadratic with respect to variable $\Vert v_{t_m} - \vz^* \Vert$ yields
\begin{align*}
\Vert v_{t_m} - \vz^* \Vert \le \left(2c+ \sqrt{1 + 5c^2} \right) \Vert \vz_0 - \vz^* \Vert \le (1+5c) \Vert \vz_0 - \vz^* \Vert.
\end{align*}
It implies 
\begin{align} \label{eq:ub-sum-delta}
    \sum_{j=0}^t \delta_{t+1} &\le A_{t+1} \delta \Vert v_{t_m} - \vz^* \Vert + 2 A_{t+1}^2 \delta^2 \le c (1+ 7c) \Vert \vz_0 - \vz^* \Vert^2
\end{align}
Note that $A_0 = 0$.
We telescope Eq. (\ref{eq:MS-unify}) over $t=0,1,\cdots,T_{\epsilon}-1$ and substitute Eq. (\ref{eq:ub-sum-delta}) to get
\begin{align*}
    A_{T_{\epsilon}} E_{T_{\epsilon}}+ R_{T_\epsilon} + \sum_{t=0}^{T_{\epsilon}-1} \min\{ \lambda_{t+1}, \lambda_{t+1}'\} A_{t+1}' N_{t+1} \le R_0 +  c (1+ 7c) \Vert \vz_0 - \vz^* \Vert^2.
\end{align*}
We let $c \le 1/8 $ to have 
\begin{align} \label{eq:our-MS}
     A_{T_\epsilon} E_{T_\epsilon}+ R_{T_\epsilon} + \sum_{t=0}^{T_\epsilon-1} \min\{ \lambda_{t+1}, \lambda_{t+1}'\} A_{t+1}' N_{t+1} \le 2R_0.
\end{align}
Note that Eq. (\ref{eq:our-MS}) is exactly 
\citep[ Eq. 9 ]{carmon2022optimal} with their $R_0$ now being replaced by $2R_0$. We can then follow the remaining steps as the proof of \citep[Theorem 1]{carmon2022optimal}
to show that the algorithm finds $E_t \le {\mu \epsilon^3}/{3}$ in $T_{\epsilon}= \gO\left( \left( {\gamma d_0^3}/{(\mu \epsilon^3)} \right)^{2/7}  \right)$ iterations, where $d_0 = \Vert \vz_0 - \vz^* \Vert$.
Therefore, we have that $\min \{\vz_{T_{\epsilon}}, \bar \vz_{T_{\epsilon}} \}\} \le {\mu \epsilon^3}/{3}$.
This further implies
\begin{align*}
h(\vz^{\rm out}) \le \tilde h(\vz^{\rm out}) + \delta \le \tilde h(\vz_{T_{\epsilon}}) + \delta \le h(\vz_{T_{\epsilon}}) + 2 \delta. 
\end{align*}
If $2 \delta \le {\mu \epsilon^3}/{3}$ and we know from $E_t \le {\mu \epsilon^3}/{3}$ that $h(\vz_{\rm out}) - h(\vz^*) \le {2 \mu \epsilon^3}/{3} $. By Lemma \ref{lem:UC-grad-dominant}, this implies that $\Vert \vz^{\rm out} - \vz^* \Vert \le 2 \epsilon$.
The last thing to show how to fulfill the goal of $\delta \le {d_0}/{(8 A_{T_{\epsilon}})}$ to ensure the above convergence rate. 
This can be done by giving a uniform lower bound of $d_0$ and a uniform upper bound of $A_{T_\epsilon}$, as we specify below. As for $d_0$, we use the trivial bound  $d_0 \ge \epsilon$. As for $A_{T_{\epsilon}}$, we analyze its upper bound below.

From Eq. (\ref{eq:our-MS}) we know that for all $t < T_\epsilon$ we have that ${\mu \epsilon^3}/{3} < E_{t} \le 2 R_0/ A_t$, which means that $A_t < {6 R_0}/{(\mu \epsilon^3)}$ for all $t < T_{\epsilon}$. The update of $A_{t+1}' = 2 \lambda_{t+1}' \left(a_{t+1}'\right)^2$, in conjunction with $R_0 \le D^2/2$, means that
\begin{align} \label{eq:upper-at}
    a_{t}' = \frac{1 + \sqrt{1 + 8 \lambda_{t}' A_{t-1}}}{4 \lambda_{t+1}'} \le \frac{1}{2 \lambda_{t}'} + \frac{1}{2} \sqrt{\frac{2A_{t-1}}{\lambda_{t}'}} < \frac{1}{2 \lambda_{t}'} + \frac{D}{2} \sqrt{\frac{6}{\mu \epsilon^3 \lambda_{t}'}}.
\end{align}
Therefore, we need a lower bound of $\lambda_{t}'$. As an intermediate goal, we first analyze $\lambda_t$. According to Definition \ref{dfn:inexact-MS-oracle} we have that
\begin{align*}
    \frac{3 \lambda_{t}}{2} \Vert \tilde \vz_{t} - \bar \vz_{t-1} \Vert \ge  \Vert \nabla h(\tilde \vz_{t}) + \vu_{t} \Vert  - \delta \ge \frac{h(\tilde \vz_{t}) - h(\vz^*)}{\Vert \tilde \vz_{t} - \vz^* \Vert} - \delta.
\end{align*}
By the definition of $T_{\epsilon}$, we know that for all $t < T_{\epsilon}$, when $\delta \le {\mu \epsilon^3}/{(6 D)}$ we have
\begin{align} \label{eq:lambda-t-prime}
    \lambda_{t} \ge \frac{2}{3 \Vert \tilde \vz_{t} - \bar \vz_{t-1} \Vert} \left( 
    \frac{h(\tilde \vz_{t}) - h(\vz^*)}{\Vert \tilde \vz_{t} - \vz^* \Vert} - \delta
    \right) \ge \frac{\mu \epsilon^3}{9 D^2}.
\end{align}
Let $k$ the the last iterate before $t$ such that $ \lambda_k' > \lambda_k$, \textit{i.e.}, $k = \arg \max_{0 \le k' \le t} \{ \lambda_{k'}' > \lambda_{k'}\}$.
If $t=k$, then Eq. (\ref{eq:lambda-t-prime}) already gives a lower bound of $\lambda_t'$ such that $\lambda_t' > \lambda_t \ge \mu \epsilon^3 / (9D^2)$. Otherwise, if $t <k$, we know from the update rule of $\lambda_t'$ that 
\begin{align*}
    \lambda_t' =  2^{t-k-1} \lambda_{k+1}' = 2^{t-k-2} \lambda_k' > \lambda_k /2 \ge \frac{\mu \epsilon^3}{18 D^2}.
\end{align*}
Now, if $\lambda'_{T_{\epsilon}-1} > \lambda_{T_{\epsilon}-1} $, then 
\begin{align*}
    \lambda_{T_{\epsilon}}' = \frac{1}{2} \lambda'_{T_{\epsilon}-1}  > \frac{\mu \epsilon^3}{18D^2}.
\end{align*}
Else, if $\lambda'_{T_{\epsilon}-1} \le \lambda_{T_{\epsilon}-1} $, then 
\begin{align*}
     \lambda_{T_{\epsilon}}' = 2 \lambda'_{T_{\epsilon}-1} > \frac{\mu \epsilon^3}{9 D^2}.
\end{align*}
For both cases, plugging  the above inequalities into Eq. (\ref{eq:upper-at}) yields that
\begin{align*}
    a_{T_{\epsilon}}' < \frac{(9 + 3 \sqrt{3}) D^2}{\mu \epsilon^3}.
\end{align*}
Therefore, 
\begin{align*}
    A_{T_{\epsilon}} = A_{T_{\epsilon} -1} + a_{T_{\epsilon}} \le A_{T_{\epsilon} -1} + a_{T_{\epsilon}}' \le \frac{(12 + 3 \sqrt{3}) D^2}{\mu \epsilon^3}. 
\end{align*}
Hence, we can ensure $\delta \le {d_0}/{(8 A_{T_{\epsilon}})}$ by setting $\delta \le \mu \epsilon^4 / {(144  D^2)}$.
\end{proof}





\section{Proofs in Section \ref{subsec:reduction}} \label{apx:proof-pre}

\begin{lem} \label{lem:U-monotone}
Let $f: \gX \times \gY \rightarrow \sR$ be a $\mu$-uniformly-convex-$\mu$-uniformly concave function. We have
\begin{align} \label{eq:U-monotone}
    \langle \vg_1 - \vg_2, \vz_1- \vz_2 \rangle \ge \frac{2\mu}{3} \Vert \vz_1 - \vz_2 \Vert^3,
\end{align}
for any $\vg_1 \in \mA(\vz_1)$, $\vg_2 \in \mA(\vz_2)$, where
\begin{align*}
    \mA(\vz) = 
    \begin{bmatrix}
        \nabla_x f(\vx,\vy)  \\
        -\nabla_y  f(\vx,\vy) 
    \end{bmatrix} +
    \begin{bmatrix}
        \partial \gI_{\gX} (\vx) \\
        \partial \gI_{\gY} (\vy)
    \end{bmatrix}.
\end{align*}
\end{lem}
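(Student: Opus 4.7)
The plan is to establish (\ref{eq:U-monotone}) by the classical four-inequality argument for saddle-point operators, enhanced with the cubic remainder terms coming from third-order uniform convexity and concavity.

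First, I would decompose $\vg_i = \tilde \vg_i + \vc_i$ where $\tilde \vg_i = (\nabla_x f(\vx_i, \vy_i), -\nabla_y f(\vx_i, \vy_i))$ and $\vc_i \in \partial \gI_{\gX}(\vx_i) \times \partial \gI_{\gY}(\vy_i)$. Since $\gI_{\gX}$ and $\gI_{\gY}$ are convex, their subdifferentials are monotone operators, so $\langle \vc_1 - \vc_2, \vz_1 - \vz_2 \rangle \ge 0$. It therefore suffices to bound the smooth part $\langle \tilde \vg_1 - \tilde \vg_2, \vz_1 - \vz_2 \rangle$ from below.

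Next, I would apply the uniform-convexity inequality from Definition~\ref{dfn:UC} to $f(\,\cdot\,, \vy_j)$ for each $j \in \{1, 2\}$ -- once expanding around $\vx_1$ and once around $\vx_2$ -- and symmetrically apply the uniform-concavity inequality to $f(\vx_i, \,\cdot\,)$ for each $i \in \{1, 2\}$. Adding the two convexity inequalities and subtracting the two concavity inequalities produces the four ``mixed'' function values $f(\vx_i, \vy_j)$ on both sides, which telescope to zero, leaving only the inner-product terms and the cubic remainders:
\begin{align*}
\langle \tilde\vg_1 - \tilde\vg_2, \vz_1 - \vz_2 \rangle \ge \frac{2\mu}{3}\bigl(\Vert \vx_1 - \vx_2 \Vert^3 + \Vert \vy_1 - \vy_2 \Vert^3\bigr).
\end{align*}

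The final step -- and the main obstacle -- is to convert this coordinate-decomposed cubic bound into the joint Euclidean cube $\Vert \vz_1 - \vz_2 \Vert^3 = (\Vert \vx_1 - \vx_2 \Vert^2 + \Vert \vy_1 - \vy_2 \Vert^2)^{3/2}$. The power-mean inequality $a^3 + b^3 \ge 2^{-1/2}(a^2 + b^2)^{3/2}$ for $a, b \ge 0$ gives $\langle \vg_1 - \vg_2, \vz_1 - \vz_2 \rangle \ge \tfrac{\sqrt 2 \mu}{3} \Vert \vz_1 - \vz_2 \Vert^3$, which differs by a factor of $\sqrt 2$ from the claimed constant. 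Recovering the exact claimed $\tfrac{2\mu}{3}$ either requires a sharper joint analysis that avoids decomposing into coordinates, or the lemma is implicitly using the ``decoupled'' convention $\Vert \vz \Vert^3 := \Vert \vx \Vert^3 + \Vert \vy \Vert^3$ on the right-hand side. Either way, the downstream applications rely only on cubic strong monotonicity with some absolute constant of order $\mu$, so the discrepancy does not affect the paper's main complexity bounds.
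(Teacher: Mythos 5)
Your proof is essentially the paper's own proof: the paper also expands the uniform-convexity and uniform-concavity inequalities at the four mixed points $(\vx_i,\vy_j)$, sums them, and observes that the function values telescope, leaving the inner-product terms and the two cubic remainders. Your separation of $\vg_i$ into a smooth part and an indicator-subgradient part, with the latter handled by plain monotonicity of $\partial\gI_\gX\times\partial\gI_\gY$, is a slightly cleaner bookkeeping device but is mathematically equivalent to the paper's choice of folding the subgradients into the four inequalities from the start (valid since $\langle \vu, \vx'-\vx\rangle\le 0$ for $\vu\in\partial\gI_\gX(\vx)$, $\vx'\in\gX$).

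The constant gap you flag is real, and it is present in the paper's own proof as well. Summing the four inequalities gives only
\begin{align*}
\langle \vg_1-\vg_2, \vz_1-\vz_2\rangle \;\ge\; \tfrac{2\mu}{3}\bigl(\Vert\vx_1-\vx_2\Vert^3 + \Vert\vy_1-\vy_2\Vert^3\bigr),
\end{align*}
and for $a=\Vert\vx_1-\vx_2\Vert$, $b=\Vert\vy_1-\vy_2\Vert$ one has the sharp inequality $a^3+b^3 \ge 2^{-1/2}(a^2+b^2)^{3/2}$ with equality at $a=b$, so the bound degrades to $\tfrac{\sqrt2\,\mu}{3}\Vert\vz_1-\vz_2\Vert^3$ rather than $\tfrac{2\mu}{3}\Vert\vz_1-\vz_2\Vert^3$; there is no sharper joint argument because equality is already attained. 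The paper simply asserts that ``summing up the above four equations yields Eq. (\ref{eq:U-monotone})'' without addressing the transition from the decoupled cubes to $\Vert\vz_1-\vz_2\Vert^3$, so the stated constant should be read as $\tfrac{\sqrt2\,\mu}{3}$ (or an unspecified constant of order $\mu$). You are right that this is harmless: every downstream use of Lemma~\ref{lem:U-monotone} only needs cubic strong monotonicity with some absolute constant.
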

\begin{proof} For any $ (\vx_1,\vy_2),(\vx_2,\vy_2)\in \gX \times \gY$,
\begin{align*}
    f(\vx_2,\vy_1) - f(\vx_1,\vy_1) &\ge \langle \nabla_x f(\vx_1,\vy_1) + \vu_1, \vx_2 - \vx_1 \rangle + \frac{\mu}{p} \Vert \vx_2 - \vx_1 \Vert^p, \quad \vu_1 \in \partial \gI_{\gX} (\vx_1); \\
    f(\vx_1,\vy_2) - f(\vx_2,\vy_2) &\ge \langle \nabla_x f(\vx_2, \vy_2) + \vu_2, \vx_1 - \vx_2 \rangle + \frac{\mu}{p} \Vert \vx_2 - \vx_1 \Vert^p, \quad \vu_2 \in \partial \gI_{\gX} (\vx_2).
\end{align*}
and 
\begin{align*}
    -f(\vx_1,\vy_2) + f(\vx_1,\vy_1) &\ge \langle -\nabla_y f(\vx_1,\vy_1) +\vv_1, \vy_2 - \vy_1 \rangle + \frac{\mu}{p} \Vert \vy_2 - \vy_1 \Vert^p, \quad \vv_1\in \partial \gI_{\gY}(\vy_1);  \\
    -f(\vx_2,\vy_1) + f(\vx_2,\vy_2) &\ge  \langle -\nabla_y f(\vx_2,\vy_2) + \vv_2, \vy_1 - \vy_2 \rangle + \frac{\mu}{p} \Vert \vy_2 - \vy_1 \Vert^p, \quad \vv_2 \in \partial \gI_{\gY} (\vy_2).
\end{align*}
Summing up the above four equations yields Eq. (\ref{eq:U-monotone}).

\end{proof}

\subsection{Proof of Lemma \ref{lem:reduction-UCUC}}

By the definition of $\tilde f$, for every $\vx \in \gX$, $\vy \in \gY$, it holds that $\vert \tilde f(\vx, \vy)  - f(\vx,\vy) \vert \le \epsilon / 3$. Then
\begin{align*}
    \max_{\vy \in \gY} f(\hat \vx, \vy) &\le \max_{\vy \in \gY} \tilde f(\hat \vx, \vy) + \epsilon / 3 \\
    \min_{\vx \in \gX} f(\vx, \hat \vy) & \ge \min_{\vx \in \gX} \tilde f(\vx, \hat \vy) - \epsilon / 3. 
\end{align*}
If $(\hat \vx, \hat \vy)$ is an $(\epsilon/3)$-solution to the regularized function $\tilde f$, \textit{i.e.}, 
\begin{align*}
    \max_{\vy \in \gY} 
    \tilde f(\vx, \vy)
     - \min_{\vx \in \gX} \tilde f(\vx, \hat \vy) \le \epsilon /3,
\end{align*}
then we can conclude that 
\begin{align*}
    \max_{\vy \in \gY} 
    f(\vx, \vy)
     - \min_{\vx \in \gX} f(\vx, \hat \vy) \le \epsilon.
\end{align*}

\subsection{Proof of Lemma \ref{lem:max-preserve-convex}}
\begin{proof}
Let $y^*(\vx) = \arg\max_{\vy \in \gY} f(\vx,\vy)$.
For any $\vx_1,\vx_2 \in \gX$, we have that
\begin{align*}
 &\quad   \Phi(\vx_1) - \Phi(\vx_2) - \langle \Phi(\vx_2), \vx_1 - \vx_2 \rangle \\
 &= f(\vx_1, \vy^*(\vx_1)) - f(\vx_2, \vy^*(\vx_2)) - \langle \nabla_x f(\vx_2, \vy^*(\vx_2), \vx_1 - \vx_2 \rangle \\
 &\ge f(\vx_1, \vy^*(\vx_2)) - f(\vx_2, \vy^*(\vx_2)) - \langle \nabla_x f(\vx_2, \vy^*(\vx_2), \vx_1 - \vx_2 \rangle \\
 &\ge \frac{\mu_x}{3} \Vert \vx_1 - \vx_2 \Vert^3,
\end{align*}
where the last step uses the fact that $f(\,\cdot\,,\vy)$ is $\mu_x$-uniformly convex.
This is exactly the $\mu_x$-uniform convexity by definition. Repeating the same analysis in variable $\vy$ shows that $\Psi(\vy)$ is $\mu_y$-uniformly concave.
\end{proof}

\subsection{Proof of Lemma \ref{lem:cont-sol-set}}

\begin{proof} 
The proof is similar to \citep[Lemma B.2]{lin2020near}. Let $\vx_1,\vx_2 \in \gX$, from the optimality condition, we have that
\begin{align*}
   \langle \nabla_y f(\vx_1, \vy^*(\vx_1),  \vy - \vy^*(\vx_1) \rangle &\le 0, \quad \forall \vy \in \gY; \\
   \langle \nabla_y f(\vx_2, \vy^*(\vx_2), \vy' - \vy^*(\vx_2) \rangle &\le 0,\quad \forall \vy' \in \gY.
\end{align*}
Plugging $\vy = \vy^*(\vx_2) $ and $\vy' = \vy^*(\vx_1)$ into the above inequalities and summing them up gives
\begin{align*}
    \langle \nabla_y f(\vx_1, \vy^*(\vx_1) - \nabla_y f(\vx_2, \vy^*(\vx_2)),  \vy^*(\vx_2) - \vy^*(\vx_1) \rangle  \le 0.
\end{align*}
By the $\mu_y$-uniform concavity in $\vy$, we know from of \citep[Eq. (4.2.12)]{nesterov2018lectures} that
\begin{align*}
    \langle \nabla_y f(\vx_1, \vy^*(\vx_2)) - \nabla_y f(\vx_1, \vy^*(\vx_1)),  \vy^*(\vx_2) - \vy^*(\vx_1) \rangle + \mu_y \Vert \vy^*(\vx_2) - \vy^*(\vx_1) \Vert^3  \le 0.
\end{align*}
Now we can sum up the above inequality to obtain that 
\begin{align*}
   \langle \nabla_y f(\vx_1, \vy^*(\vx_2)) - \nabla_y f(\vx_2, \vy^*(\vx_2)),  \vy^*(\vx_2) - \vy^*(\vx_1) \rangle + \mu_y \Vert \vy^*(\vx_2) - \vy^*(\vx_1) \Vert^3  \le 0.
\end{align*}
We apply the Lipschitz continuity of $\nabla_y f (\vx,\vy)$ to get
\begin{align*}
    \mu_y \Vert \vy^*(\vx_2) - \vy^*(\vx_1) \Vert^3 \le \ell \Vert \vx_1 - \vx_2 \Vert  \Vert \vy^*(\vx_2) - \vy^*(\vx_1) \Vert,
\end{align*}
which proves the continuity of $\vy^*(\vx)$. Similarly, we can also show the result for $\vx^*(\vy)$.
\end{proof}

\section{Proofs in Section \ref{subsec:main}}

Below, we show that the inexact zeroth-order and first-order oracles are easily obtainable for both the primal objective $\Phi(\vx) := \max_{\vy \in \gY} f(\vx,\vy)$ and dual objective $\Psi(\vy; \bar \vx):= \min_{\vx \in \gX} g(\vx,\vy ; \bar \vx)$, where $g(\vx,\vy; \bar \vx) := f(\vx,\vy) + \frac{\gamma}{3} \Vert \vx- \bar \vx \Vert^3$.

\begin{thm} \label{thm:get-zo-fo}
Under Assumption \ref{asm:function-lip}, \ref{asm:grad-lip}, \ref{asm:Hess-lip}, \ref{asm:UC-UC} and \ref{asm:M-min},
$\gM_{\rm min}$ can
\begin{itemize}
    \item find a point $\hat \vy$ such that $\vert f(\vx,\hat \vy) - \Phi(\vx) \vert \le \delta$ in $\gO \left( T_{\rm min}(\rho, \mu_y) \log (  D L /\delta  \right))$ iterations. 
    \item find a point $\hat \vy$ such that $\Vert \nabla f(\vx,\hat \vy) - \nabla \Phi(\vx) \Vert \le \delta$ in $\gO \left( T_{\rm min}(\rho, \mu_y) \log ( D \ell / \delta \right))$ iterations. 
    \item find a point $\hat \vx$ such that $\vert g(\hat \vx,\vy; \bar \vx) - \Psi(\vy ; \bar \vx) \vert \le \delta$ in $\gO \left( T_{\rm min}(\rho+ 2\gamma, \gamma/2) \log (  D L /\delta  \right))$ iterations. 
    \item finds a point $\hat \vx$ such that $\Vert \nabla_y g(\hat \vx,\vy; \bar \vx) - \nabla \Psi(\vy ; \bar \vx) \Vert \le \delta$ in $\gO \left( T_{\rm min}(\rho+2\gamma, \gamma/2) \log (  D \ell /\delta  \right))$ iterations. 
\end{itemize}


\end{thm}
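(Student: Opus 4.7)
The plan is to handle all four bullets uniformly: in each case I would run $\gM_{\rm min}$ on a suitably conditioned inner problem to a tolerance $\zeta$ in the iterate, and then invoke one of the Lipschitz assumptions on $f$ to convert $\zeta$-closeness of the iterate into $\delta$-closeness in function value or gradient. Choosing $\zeta$ as a constant multiple of $\delta/L$ or $\delta/\ell$ then produces the stated iteration counts via Assumption \ref{asm:M-min}.

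For the primal envelope $\Phi(\vx)=\max_{\vy\in\gY}f(\vx,\vy)$, Assumption \ref{asm:UC-UC} guarantees that $-f(\vx,\,\cdot\,)$ is $\mu_y$-uniformly convex with $\rho$-Lipschitz Hessian on $\gY$, so the inner maximizer $\vy^*(\vx)$ is unique and Assumption \ref{asm:M-min} yields $\hat\vy$ with $\|\hat\vy-\vy^*(\vx)\|\le \zeta$ in $T_{\rm min}(\rho,\mu_y)\log(D/\zeta)$ iterations. Assumption \ref{asm:function-lip} gives $|f(\vx,\hat\vy)-\Phi(\vx)|\le L\zeta$, which yields the first bullet upon taking $\zeta=\delta/L$. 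For the second bullet, I would invoke Danskin's theorem (legitimate because $\vy^*(\vx)$ is unique) to write $\nabla\Phi(\vx)=\nabla_x f(\vx,\vy^*(\vx))$; Assumption \ref{asm:grad-lip} then gives $\|\nabla_x f(\vx,\hat\vy)-\nabla\Phi(\vx)\|\le \ell\zeta$, and $\zeta=\delta/\ell$ finishes.

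For the dual envelope $\Psi(\vy;\bar\vx)=\min_{\vx\in\gX}g(\vx,\vy;\bar\vx)$, the key preparatory observation is that the cubic regularizer $(\gamma/3)\|\vx-\bar\vx\|^3$ is $(\gamma/2)$-uniformly convex with $(2\gamma)$-Lipschitz Hessian by Lemma \ref{lem:cubic-func}; since $f(\,\cdot\,,\vy)$ is convex (Assumption \ref{asm:UC-UC} for $\mu_x\ge 0$) with $\rho$-Lipschitz Hessian, $g(\,\cdot\,,\vy;\bar\vx)$ is at least $(\gamma/2)$-uniformly convex with $(\rho+2\gamma)$-Lipschitz Hessian. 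Applying $\gM_{\rm min}$ to $g(\,\cdot\,,\vy;\bar\vx)$ therefore returns $\hat\vx$ with $\|\hat\vx-\vx^*(\vy;\bar\vx)\|\le\zeta$ in $T_{\rm min}(\rho+2\gamma,\gamma/2)\log(D/\zeta)$ iterations. The gradient of the cubic is bounded in norm by $\gamma D_x^2$ on $\gX$, so $g(\,\cdot\,,\vy;\bar\vx)$ is $(L+\gamma D_x^2)$-Lipschitz, and $|g(\hat\vx,\vy;\bar\vx)-\Psi(\vy;\bar\vx)|\le(L+\gamma D^2)\zeta$; choosing $\zeta$ of order $\delta/L$ (with the additive $\gamma D^2$ absorbed inside the logarithm) gives the third bullet. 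Finally, since the cubic regularizer is independent of $\vy$, Danskin yields $\nabla\Psi(\vy;\bar\vx)=\nabla_y g(\vx^*(\vy;\bar\vx),\vy;\bar\vx)=\nabla_y f(\vx^*(\vy;\bar\vx),\vy)$, so Assumption \ref{asm:grad-lip} gives $\|\nabla_y g(\hat\vx,\vy;\bar\vx)-\nabla\Psi(\vy;\bar\vx)\|\le\ell\zeta$ and $\zeta=\delta/\ell$ concludes.

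There is no real obstacle: the core content of the theorem is a bookkeeping exercise that (i) reads off the condition numbers of the two inner problems from Lemma \ref{lem:cubic-func} and the $\mu_y$ in Assumption \ref{asm:UC-UC}, (ii) applies Assumption \ref{asm:M-min} in the correct regime, and (iii) converts iterate error to function/gradient error via $L$ and $\ell$. The only mild subtlety is verifying uniqueness of the inner maximizer/minimizer to license Danskin's theorem, which is immediate from the uniform convexity in each case.
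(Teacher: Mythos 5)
Your proposal is correct and takes essentially the same route as the paper's proof: run $\gM_{\rm min}$ to iterate accuracy $\zeta$ on the inner problem, identify its condition parameters ($(\rho,\mu_y)$ for $\Phi$, $(\rho+2\gamma,\gamma/2)$ for $\Psi$ via Lemma \ref{lem:cubic-func}), and convert iterate error to value/gradient error via Assumptions \ref{asm:function-lip} and \ref{asm:grad-lip} together with Danskin's theorem. You are in fact slightly more careful than the paper's two-line argument, explicitly noting the uniqueness needed for Danskin and the $\gamma D^2$ contribution to the Lipschitz constant of $g$, both of which the paper glosses over but which do not change the stated $\gO(\cdot)$ bounds.
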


\begin{proof}
To obtain an inexact zeroth-order oracle under 
Assumption \ref{asm:function-lip}, it suffices to find $\hat \vy$ such that $\Vert \hat \vy - \vy^*(\vx) \Vert \le \delta /L$, requires $\gM_{\rm min}$ in $\gO \left( (\rho/ \mu_y)^{2/7} \log ( D L / (\delta \mu_y))  \right)$ iterations. Similarly, by Danskin's theorem $\nabla \Phi(\vx) = \nabla_x f(\vx,\vy^*(\vx))$, to obtain an inexact first-order oracle under Assumption \ref{asm:grad-lip}, it suffices to find $\hat \vy$ such that $\Vert \hat \vy - \vy^*(\vx) \Vert \le \delta/ \ell$. This proves the first two claims. And the last two claims are similar by noting that $g(\vx,\vy;\bar \vx)$ is $(\gamma/2)$-uniformly convex in $\vx$ and has $(\rho+2\gamma)$-Lipschitz continuous Hessians.
\end{proof}

\subsection{Proof of Theorem \ref{thm:outer} }

Let $(\vx^*,\vy^*) = \arg \min_{\vx \in \gX} \max_{\vy \in \gY} f(\vx,\vy)$. \textit{Line 1} in Algorithm \ref{alg:Minimax-AIPE} finds $\Vert \hat \vx - \vx^* \Vert \le \zeta_1$ in $ \gO( (D_x^3 \gamma / \mu_x)^{2/7} \log (D / \zeta_1)) $ calls of the subroutine Algorithm \ref{alg:Minimax-AIPE-mid}. \textit{Line 2} in Algorithm \ref{alg:Minimax-AIPE} finds 
 $\Vert \hat \vy - \vy^*(\hat \vx) \Vert \le \zeta_1$, where $\vy^*(\vx) = \arg \max_{\vy \in \gY} f(\vx,\vy)$. Then
\begin{align*}
    &\quad \Vert \hat \vy - \vy^* \Vert \le \Vert \hat \vy - \vy^*(\hat \vx) \Vert + \frac{\ell^{1/2}}{\mu_y^{1/2}} \sqrt{\Vert \hat \vx - \vx^* \Vert}, 
\end{align*}
where we use Lemma \ref{lem:cont-sol-set} and $\vy^*(\vx^*) = \vx^*$ in the above inequality. It means
\begin{align*}
    \Vert (\hat \vx,\hat \vy) - (\vx^*,\vy^*) \Vert \le 3 \sqrt{\frac{\ell \zeta_1}{\mu_y}}
\end{align*}
Then from Lemma \ref{lem:eg-grad} we know \textit{Line 3} in Algorithm \ref{alg:Minimax-AIPE} outputs $\vz^{\rm out} =  (\vx^{\rm out},\vy^{\rm out})  $ such that 
\begin{align*}
    \left\Vert 
         \mF(\vz^{\rm out}) + \vc^{\rm out}
        \right \Vert \le 7  \ell \sqrt{\frac{3\ell \zeta_1}{\mu_y}}
\end{align*}
for some $(\vu^{\rm out}, \vv^{\rm out}) = \vc^{\rm out} \in  \partial \gI_{\gZ}(\vz^{\rm out})$.
Using the convex-concavity, we have that
\begin{align*}
 &\quad f(\vx^{\rm out},\vy^*(\vx^{\rm out})) - f(\vx^*(\vy^{\rm out}),\vy^{\rm out}) \\
 &= f(\vx^{\rm out},\vy^*(\vx^{\rm out})) -  f(\vx^{\rm out},\vy^{\rm out}) + f(\vx^{\rm out},\vy^{\rm out}) -  f(\vx^*(\vy^{\rm out}),\vy^{\rm out}) \\
 &\le \langle \nabla_x f(\vx^{\rm out},\vy^{\rm out}) + \vu^{\rm out}, \vx^*(\vy^{\rm out}) -\vx^{\rm out} \rangle \\
 &\quad + \langle - \nabla_y f(\vx^{\rm out},\vy^{\rm out}) + \vv^{\rm out},  \vy^*(\vx^{\rm out}) - \vy^{\rm out} \rangle.
\end{align*}
Finally, the Cauchy–Schwarz inequality tells 
\begin{align*}
&\quad f(\vx^{\rm out},\vy^*(\vx^{\rm out})) - f(\vx^*(\vy^{\rm out}),\vy^{\rm out}) \\
&\le \left\Vert 
    \begin{bmatrix}
          \nabla_x f(\vx^{\rm out}, \vy^{\rm out}) + \vu^{\rm out} \\
        - \nabla_y f(\vx^{\rm out}, \vy^{\rm out}) + \vv^{\rm out}
    \end{bmatrix}
        \right \Vert \le 7  \ell D \sqrt{\frac{3\ell \zeta_1}{\mu_y}}.
\end{align*}

\subsection{Proof of Theorem \ref{thm:middle}}
Let $ (\vx^*(\bar \vx), \vy^*(\bar \vx)) = \arg \min_{\vx \in \gX} \max_{\vy \in \gY} g(\vx,\vy; \bar \vx) $ and $\Psi(\vy; \bar \vx) = \min_{\vx \in \gX} f(\vx,\vy)$. \textit{Line 1} in Algorithm \ref{alg:Minimax-AIPE-mid} finds 
$ \Vert \hat \vy - \vy^*(\bar \vx) \Vert \le \zeta_2$ in $\gO( (D_y^3 \gamma / \mu_y )^{2/7} \log (D/ \zeta_2))$ calls of the subroutine Algorithm \ref{alg:Minimax-AIPE-inner}, where $\vy^*(\bar \vx) = \arg \max_{\vy \in \gY} \Psi(\vy; \bar \vx)$. \textit{Line 2} in Algorithm \ref{alg:Minimax-AIPE-mid} finds $\hat \vx$ such that $\Vert \hat \vx - \vx^*(\hat \vy; \bar \vx) \Vert \le \zeta_2$.
By Lemma \ref{lem:eg-grad}, \textit{Line 3} in Algorithm \ref{alg:Minimax-AIPE-mid} further ensures that $\Vert \nabla_x g(\vx^{\rm out}, \hat \vy; \bar \vx) + \vu^{\rm out} \Vert \le 
6 (\ell + 2 \gamma D) \zeta_2 $ for some $\vu^{\rm out} \in \partial \gI_{\gX}(\vx^{\rm out})$. 
Invoking Lemma \ref{lem:cont-sol-set}, we have that
\begin{align} \label{eq:cont-mid}
\begin{split}
     \Vert \vx^*(\vy_1; \bar \vx) - \vx^*(\vy_2; \bar \vx) \Vert^2 &\le \frac{\ell +2 \gamma D}{\gamma/2} \Vert \vy_1 - \vy_2 \Vert, \quad \forall \vy_1,\vy_2 \in \gY; \\
    \Vert \vy^*(\vx_1; \bar \vx) - \vy^*(\vx_2; \bar \vx) \Vert^2 &\le \frac{\ell}{\mu_y} \Vert \vx_1 - \vx_2 \Vert, \quad \forall \vx_1,\vx_2 \in \gX,
\end{split}
\end{align}
where $\vx^*(\vy; \bar \vx) = \arg \min_{\vx \in \gX} g(\vx,\vy; \bar \vx)$ and $\vy^*(\vx; \bar \vx) =\arg \max_{\vy \in \gY} g(\vx,\vy; \bar\vx) $.
Then 
\begin{align} \label{eq:xxx-plug}
\begin{split}
\Vert \vx^{\rm out} - \vx^*(\bar \vx) \Vert &\le \Vert \vx^{\rm out} - \vx^*(\hat \vy; \bar \vx) \Vert +  \frac{(\ell + 2 \gamma D )^{1/2}}{(\gamma/2)^{1/2}} \sqrt{ \Vert \hat \vy - \vy^*(\bar \vx) \Vert} \\
&\le \frac{1}{(\gamma/2)^{1/2}} \sqrt{ \Vert \nabla_x g(\vx^{\rm out},\hat \vy; \bar \vx) + \vu^{\rm out} \Vert } + \frac{(\ell + 2 \gamma D )^{1/2}}{(\gamma/2)^{1/2}} \sqrt{ \Vert \hat \vy - \vy^*(\bar \vx) \Vert},
\end{split}
\end{align}
where we use (\ref{eq:cont-mid}) and $\vx^*(\vy^*(\bar \vx); \bar \vx) = \vx^*(\bar \vx)$ in the first inequality, Lemma \ref{lem:U-monotone} and the Cauchy-Schwarz inequality in the second one. Using a similar analysis, we can further show that
\begin{align*}
    &\quad \left \Vert \nabla \Phi(\vx^{\rm out}) + \gamma \Vert \vx^{\rm out} - \bar \vx \Vert (\vx^{\rm out} - \bar \vx) + \vu^{\rm out}  \right \Vert = \Vert \nabla_x g(\vx^{\rm out}, \vy^*(\vx^{\rm out}; \bar \vx) ;\bar \vx) + \vu^{\rm out}  \Vert \\
    &\le     \Vert \nabla_x g(\vx^{\rm out}, \vy^*(\bar \vx) ;\bar \vx) + \vu^{\rm out}  \Vert + \frac{(\ell + 2 \rho D) \ell^{1/2}}{\mu_y^{1/2}} \sqrt{\Vert \vx^{\rm out}  - \vx^*(\bar \vx) \Vert} \\
    &\le \Vert \nabla_x g(\vx^{\rm out}, \hat \vy; \bar \vx) + \vu^{\rm out} \Vert + (\ell +2 \rho D) \Vert \hat \vy - \vy^*(\bar \vx) \Vert + \frac{(\ell + 2 \rho D) \ell^{1/2}}{\mu_y^{1/2}} \sqrt{\Vert \vx^{\rm out}  - \vx^*(\bar \vx) \Vert},
\end{align*}
where $ \vy^*(\vx; \bar \vx) = \arg \max_{\vy \in \gY} g(\vx,\vy; \bar \vx) $. 
Now we plug Eq. (\ref{eq:xxx-plug}) into the above inequality to get
\begin{align} \label{eq:prox-x-1}
\begin{split}
&\quad   \Vert \nabla_x g(\vx^{\rm out}, \vy^*(\vx^{\rm out}; \bar \vx) ;\bar \vx) + \vu^{\rm out}  \Vert \\
&\le 6 (\ell + 2 \gamma D) \zeta_2 +  \frac{(\ell + 2 \rho D) \ell^{1/2}}{\mu_y^{1/2} (\gamma/2)^{1/4}} \left( 
   6 (\ell+2\gamma D) \zeta_2
    \right)^{1/4} \\
    &\quad + (\ell + 2\rho D) \zeta_2 +  \frac{(\ell + 2 \rho D) \ell^{1/2} (\ell + 2 \gamma D)^{1/4}}{\mu_y^{1/2} (\gamma/2)^{1/4}} (\zeta_2)^{1/2},
\end{split}
\end{align}
It means that Algorithm \ref{alg:Minimax-AIPE-mid} can successfully implement a $(\delta,\gamma)$-proximal oracle that satisfies Assumption \ref{asm:prox-Phi} by setting $\zeta_2 = \Omega(1/ {\rm poly} (\rho, \ell, D,\gamma, \mu_x^{-1}, \mu_y^{-1}, \zeta_1^{-1} ))$. Finally, the complexity of obtaining inexact zeroth-order and first-order oracles for $\Phi(\vx)$ is given in Theorem \ref{thm:get-zo-fo}.



\subsection{Proof of Theorem \ref{thm:ANPE-inner}}
Let $ (\vx^*(\bar \vx, \bar \vy), \vy^*(\bar \vx, \bar \vy)) = \arg \min_{\vx \in \gX} \max_{\vy \in \gY} h(\vx,\vy; \bar \vx, \bar \vy)$. 
By Theorem \ref{thm:NPE-restart}, we know that \textit{Line 1} of Algorithm \ref{alg:Minimax-AIPE-inner} can find $\Vert (\hat \vx,\hat \vy) - (\vx^*(\bar \vx, \bar \vy), \vy^*(\bar \vx, \bar \vy)) \Vert \le \zeta_3$ in $\gO( ( (\rho + 2\gamma) / \gamma)^{2/3}\log(D/\zeta_3))$ iterations.
And by Lemma \ref{lem:eg-grad}, \textit{Line 2} of Algorithm \ref{alg:Minimax-AIPE-inner} guarantees that 
\begin{align*}
   \left \Vert
   \begin{bmatrix}
       \nabla_x h(\vx^{\rm out},\vy^{\rm out}; \bar \vx, \bar \vy) + \vu^{\rm out} \\
       - \nabla_y h(\vx^{\rm out},\vy^{\rm out}; \bar \vx, \bar \vy) + \vv^{\rm out}
   \end{bmatrix}
   \right \Vert \le 6 (\ell + 2\gamma D) \zeta_3
\end{align*}
for some $\vu^{\rm out} \in \partial \gI_{\gX}(\vx)$ and $\vv^{\rm out} \in \partial \gI_{\gY} (\vy)$. Invoking Lemma \ref{lem:cont-sol-set}, we know that 
\begin{align*}
    \Vert  \vx^*(\vy_1; \bar\vx , \bar \vy) - \vx^*(\vy_2; \bar\vx , \bar \vy) \Vert^2 &\le \frac{\ell + 2 \gamma D}{\gamma /2 } \Vert \vy_1 - \vy_2 \Vert, \quad \forall \vy_1, \vy_2 \in \gY,
\end{align*}
where $ \vx^*(\vy; \bar\vx , \bar \vy) = \arg \min_{\vx \in \gX} h(\vx,\vy; \bar \vx, \bar \vy)$.
Then
\begin{align} \label{eq:prox-xy-1}
\begin{split}
    &\quad \left \Vert \nabla \Psi(\vy^{\rm out}; \bar \vx) + \gamma \Vert \vy^{\rm out} - \bar \vy \Vert ( \vy^{\rm out} - \bar \vy) + \vv^{\rm out} \right \Vert = \Vert \nabla_y h(\vx^*(\vy^{\rm out}; \bar \vx, \bar \vy),\vy^{\rm out}; \bar \vx, \bar \vy) + \vv^{\rm out} \Vert \\
    &\le  \Vert \nabla_y h(\vx^*(\bar \vx, \bar \vy), \vy^{\rm out};\bar \vx, \bar \vy) + \vv^{\rm out} \Vert +  \frac{(\ell +2 \gamma D)^{3/2}}{(\gamma/2)^{1/2}} \Vert \vy^{\rm out} - \vy^*(\bar \vx, \bar \vy) \Vert \\
    &\le \Vert \nabla_y h(\vx^{\rm out}, \vy^{\rm out}; \bar \vx, \bar \vy) +\vv^{\rm out} \Vert + (\ell +2 \gamma D) \Vert \vx^{\rm out} - \vx^*(\bar \vx, \bar \vy) \Vert \\
    &\quad + \frac{(\ell +2 \gamma D)^{3/2}}{(\gamma/2)^{1/2}} \sqrt{\Vert \vy^{\rm out} - \vy^*(\bar \vx, \bar \vy) \Vert} \\
    &\le 6 (\ell + 2 \gamma D) \zeta_3
    + \frac{\ell + 2\gamma D}{(\gamma/2)^{1/2}}   \left( 6 (\ell+2 \gamma D) \zeta_3 \right)^{1/2}  +\frac{(\ell +2 \gamma D)^{3/2}}{(\gamma/2)^{3/4}} \left( 6 (\ell+2 \gamma D) \zeta_3 \right)^{1/4},
\end{split}
\end{align}
where in the last line we use Lemma \ref{lem:U-monotone} as well as the Cauchy-Schwartz inequality to upper bound the distance to saddle point with gradient norm. The above inequality means that Algorithm \ref{alg:Minimax-AIPE-inner} can successfully implement a $(\delta,\gamma)$-proximal oracle that satisfies Assumption \ref{asm:prox-Psi} by setting $\zeta_2 = \Omega(1/ {\rm poly} (\rho, \ell, D,\gamma, \mu_y^{-1}, \zeta_2^{-1} ))$.
Finally, the complexity of obtaining inexact zeroth-order and first-order oracles for $\Psi(\vy; \bar \vx)$ is given in Theorem \ref{thm:get-zo-fo}.

\section{Proofs in Section \ref{subsec:acc-practice}}

\subsection{Guarantee of the NPE Subroutine}

\citet{monteiro2012iteration} proposed the Newton Proximal Extragradient (NPE) method for monotone variational inequalities, which can find an $\epsilon$-solution with $\gO(\epsilon^{-2/3})$ second-order oracle calls. As noted in \citep{bullins2022higher,huang2022approximation,lin2022explicit,lin2022perseus,adil2022optimal}, the procedure of NPE can be simplified by using the cubic regularized Newton oracle. We present the simplified version in Algorithm \ref{alg:NPE}. 
It is known \citep{monteiro2012iteration,adil2022optimal,bullins2022higher} that Algorithm \ref{alg:NPE} can provably find an $\epsilon$-solution to the variational inequality problem induced by a monotone operator $\mF$ in $\gO(\epsilon^{-2/3})$ iterations. And applying the restart strategy on it (Algorithm \ref{alg:NPE-restart}) can solve a $\mu$-strongly monotone variational inequality problem in $\gO( (\gamma/\mu)^{2/3} \log \epsilon^{-1}   )$ iteration complexity, as stated in the following theorem.        

\begin{algorithm*}[t]  
\caption{NPE$(\vz_0, T, \gamma) $
}\label{alg:NPE}
\begin{algorithmic}[1] 
\renewcommand{\algorithmicrequire}{ \textbf{Input:}}
\FOR{$t = 0,\cdots, T-1$}  
\STATE \quad $\vz_{t+/2}, \vu_{t+1/2} = {\rm CRN}( \vz_t, \gamma)$ \\
\STATE \quad $\eta_t = \frac{1}{2 \gamma \Vert \vz_t - \vz_{t+1/2} \Vert}$ \\
\STATE \quad $\vz_{t+1} = \arg \min_{\vz \in \gZ} \left\{ \langle \eta_t \mF(\vz_{t+1/2}), \vz - \vz_t \rangle + \frac{1}{2} \Vert \vz - \vz_t \Vert^2  \right\} $.
\ENDFOR \\
\RETURN $z_{\rm out} = \frac{1}{\sum_{t=0}^{T-1} \eta_t} \sum_{t=0}^{T-1} \eta_t \vz_{t+1/2}$ \\
\end{algorithmic}
\end{algorithm*}

\begin{algorithm*}[t]  
\caption{NPE-restart$(\vz_0, T, \gamma, S) $
}\label{alg:NPE-restart}
\begin{algorithmic}[1] 
\renewcommand{\algorithmicrequire}{ \textbf{Input:}}
\STATE $\vz^{(0)} = \vz_0$ \\
\FOR{$s=0,\cdots,S-1$}
\STATE \quad $\vz^{(s+1)} =\text{NPE}(\vz^{(s)}, T, \gamma) $ \\
\ENDFOR \\
\RETURN $\vz^{(S)}$
\end{algorithmic}
\end{algorithm*}
\begin{thm}[NPE-restart] \label{thm:NPE-restart}
Under Assumption \ref{asm:UC-UC} with $\mu_x = \mu_y = \mu$ and Assumption \ref{asm:Hess-lip}, running Algorithm \ref{alg:NPE-restart} with $\gamma = 2 \rho$ and $T = \gO(  (\gamma/\mu)^{2/3})$ and $S = \gO(\log (d_0/\epsilon))$ returns a point $\vz^{(S)}$ such that $\Vert \vz^{(S)} - \vz^* \Vert \le \epsilon$, where $\vz^*$ is the unique solution to Problem (\ref{prob:VI}) and $d_0 = \Vert \vz_0 - \vz^* \Vert$.
\end{thm}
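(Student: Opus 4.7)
}

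The plan is to establish a contraction bound on the distance to the saddle point in one epoch of NPE, and then iterate it. Concretely, I will prove that, for a suitably chosen $T = \gO((\rho/\mu)^{2/3})$ and $\gamma = 2\rho$, one call to Algorithm \ref{alg:NPE} satisfies $\Vert \vz^{(s+1)} - \vz^* \Vert \le \tfrac{1}{2} \Vert \vz^{(s)} - \vz^* \Vert$; the claim then follows by iterating this $S = \lceil \log_2(d_0/\epsilon) \rceil$ times.

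First I would analyze one epoch of NPE via the standard Monteiro--Svaiter framework. By Lemma \ref{lem:CRN-is-MS}, choosing $\gamma = 2\rho$ makes each CRN step $\vz_{t+1/2} = {\rm CRN}(\vz_t, 2\rho)$ a $(0,\rho)$-proximal step, equivalently an HPE step with relative error $\sigma = 1/2$ and step size $\lambda_t = 1/(2\rho \Vert \vz_t - \vz_{t+1/2}\Vert)^{-1}$ (matching the $\eta_t$ in the algorithm). The standard HPE gap bound yields
\begin{align*}
{\rm Gap}(\vz_{\rm out}) \;\le\; \frac{\Vert \vz_0 - \vz^* \Vert^2}{2 \sum_{t=0}^{T-1} \eta_t},
\end{align*}
while the Monteiro--Svaiter ``large-step'' lemma gives $\sum_{t=0}^{T-1} \eta_t \ge c\, T^{3/2}/(\rho \Vert \vz_0 - \vz^*\Vert)$ for a universal constant $c$. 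Combining, one obtains the usual second-order rate ${\rm Gap}(\vz_{\rm out}) \le C \rho \Vert \vz_0 - \vz^*\Vert^3 / T^{3/2}$.

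Next I convert the gap bound to a distance bound using uniform convex-concavity. By Lemma \ref{lem:max-preserve-convex}, the primal function $\Phi(\vx) = \max_{\vy\in\gY} f(\vx,\vy)$ is $\mu$-uniformly convex and the dual $\Psi(\vy) = \min_{\vx\in\gX} f(\vx,\vy)$ is $\mu$-uniformly concave. Writing $\vz_{\rm out} = (\vx_{\rm out}, \vy_{\rm out})$ and decomposing
\begin{align*}
{\rm Gap}(\vz_{\rm out}) = [\Phi(\vx_{\rm out}) - \Phi(\vx^*)] + [\Psi(\vy^*) - \Psi(\vy_{\rm out})] \;\ge\; \tfrac{\mu}{3}\bigl(\Vert \vx_{\rm out}-\vx^*\Vert^3 + \Vert \vy_{\rm out} - \vy^*\Vert^3\bigr),
\end{align*}
and using $(a^2+b^2)^{3/2} \le \sqrt{2}\,(a^3+b^3)$, I get $\Vert \vz_{\rm out} - \vz^*\Vert^3 \le (3\sqrt{2}/\mu)\,{\rm Gap}(\vz_{\rm out})$. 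Combining with the gap bound above yields
\begin{align*}
\Vert \vz_{\rm out} - \vz^*\Vert \;\le\; C'\,(\rho/\mu)^{1/3}\,T^{-1/2}\,\Vert \vz_0 - \vz^*\Vert,
\end{align*}
so choosing $T = \lceil (2C')^2 (\rho/\mu)^{2/3} \rceil = \gO((\gamma/\mu)^{2/3})$ forces the RHS below $\tfrac{1}{2}\Vert \vz_0 - \vz^*\Vert$. The restart loop in Algorithm \ref{alg:NPE-restart} then halves the distance per outer iteration, giving $\Vert \vz^{(S)} - \vz^*\Vert \le 2^{-S} d_0 \le \epsilon$ after $S = \lceil \log_2(d_0/\epsilon) \rceil$ epochs.

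The main obstacle is establishing the one-epoch gap bound ${\rm Gap}(\vz_{\rm out}) \lesssim \rho D_0^3 / T^{3/2}$ with the averaged iterate defined in Algorithm \ref{alg:NPE}: this requires the Monteiro--Svaiter telescoping together with the ``large-step'' estimate on $\sum_t \eta_t$, both in the constrained setting where the CRN oracle returns a subgradient term $\vu_{t+1/2} \in \partial \gI_{\gZ}(\vz_{t+1/2})$. These are essentially the proofs already carried out in \citet{monteiro2012iteration,bullins2022higher,adil2022optimal}; I would cite them to avoid reproducing the calculation. The remaining work - passing from gap to distance via uniform convex-concavity and iterating the restart - is routine given Lemmas \ref{lem:max-preserve-convex} and \ref{lem:U-monotone}.
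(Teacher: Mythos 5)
Your strategy matches the paper's own proof: establish a per-epoch contraction for Algorithm~\ref{alg:NPE} and then iterate via the restart loop. Where you diverge is in how the Monteiro--Svaiter regret bound is converted into a distance bound. The paper stays entirely in the operator/regret framework: it applies Lemma~\ref{lem:U-monotone} pointwise to each $\vz_{t+1/2}$ (giving $\Vert\vz_{t+1/2}-\vz^*\Vert^3 \le \tfrac{3}{2\mu}\langle\mF(\vz_{t+1/2}),\vz_{t+1/2}-\vz^*\rangle$, since $0\in\mA(\vz^*)$) and Jensen's inequality on $\Vert\cdot-\vz^*\Vert^3$, concluding $\Vert\vz_{\rm out}-\vz^*\Vert^3 \le \tfrac{3}{2\mu}\,{\rm Regret}$ in one line. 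You instead pass through function values, using Lemma~\ref{lem:max-preserve-convex} (uniform convexity of $\Phi$ and $\Psi$) and a gap decomposition. Both routes give the same $(\rho/\mu)^{1/3}T^{-1/2}$ contraction factor; the paper's is a bit more economical.

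There is, however, one imprecision that is not merely cosmetic. You write ${\rm Gap}(\vz_{\rm out}) \le \tfrac{\Vert\vz_0-\vz^*\Vert^2}{2\sum_t\eta_t}$, but the HPE telescoping yields $\sum_t\eta_t\langle\mF(\vz_{t+1/2}),\vz_{t+1/2}-\vz\rangle \le \tfrac{1}{2}\Vert\vz_0-\vz\Vert^2$ \emph{for each fixed} $\vz\in\gZ$. Plugging in $\vz=\vz^*$ bounds the \emph{restricted} gap $f(\vx_{\rm out},\vy^*)-f(\vx^*,\vy_{\rm out})$, not the full duality gap ${\rm Gap}(\vz_{\rm out})=\max_{\vy}f(\vx_{\rm out},\vy)-\min_{\vx}f(\vx,\vy_{\rm out})$, which would only be bounded by $\max_{\vz\in\gZ}\Vert\vz_0-\vz\Vert^2/(2\sum_t\eta_t)=\gO(D^2/\sum_t\eta_t)$. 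This matters: with the $D^2$ version the epoch length $T$ needed for halving would grow as $d_0$ shrinks, and the restart contraction would fail. The fix is short: your lower bound $\tfrac{\mu}{3}\bigl(\Vert\vx_{\rm out}-\vx^*\Vert^3+\Vert\vy_{\rm out}-\vy^*\Vert^3\bigr)$ already holds for the restricted gap, since $f(\vx_{\rm out},\vy^*)-f(\vx^*,\vy^*)\ge\tfrac{\mu}{3}\Vert\vx_{\rm out}-\vx^*\Vert^3$ by uniform convexity of $f(\,\cdot\,,\vy^*)$ at its minimizer $\vx^*$, and symmetrically in $\vy$. Arguing with the restricted gap $f(\vx_{\rm out},\vy^*)-f(\vx^*,\vy_{\rm out})$ throughout (rather than $\Phi$ and $\Psi$) closes the gap; at that point your proof is, up to presentation, the same as the paper's.
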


\begin{proof}
By Theorem \ref{thm:NPE}, each epoch of Algorithm \ref{alg:NPE-restart} ensures $\Vert \vz^{(s+1)}- \vz^* \Vert \le \frac{1}{2} \Vert \vz^{(s)}- \vz^* \Vert$ if setting $T = \gO\left((\gamma/ \mu)^{2/3}\right)$. And therefore 
the algorithm finds a point $\vz^{(S)}$ such that $\Vert \vz^{(S)} - \vz^* \Vert \le \epsilon$ in $S = \left \lceil \log_2 (d_0 /\epsilon) \right \rceil$ epochs.
\end{proof}

\begin{thm} \label{thm:NPE}
Under the same setting as Theorem \ref{thm:NPE-restart}, running Algorithm \ref{alg:NPE} with $\gamma = 2\rho$ outputs a point $\vz_{\rm out}$ such that $\Vert \vz_{\rm out} - \vz^* \Vert \le \epsilon$ in $T = \gO\left( \left( \frac{\gamma d_0^3}{\mu \epsilon^3 } \right)^{2/3} \right)$ iterations, where $\vz^*$ is the unique solution to Problem (\ref{prob:VI}) and $d_0 = \Vert \vz_0 - \vz^* \Vert$.
\end{thm}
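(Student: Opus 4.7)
The plan is to combine the classical Monteiro--Svaiter NPE gap inequality with the uniform monotonicity from Lemma~\ref{lem:U-monotone}, and then convert an averaged cubic-distance bound into a point-wise bound on $\Vert \vz_{\rm out} - \vz^* \Vert$ via Jensen's inequality. First, I would invoke the standard NPE analysis, which is well established for this implementation with $\gamma = 2\rho$ \citep{monteiro2012iteration,bullins2022higher,adil2022optimal,lin2022perseus}. The CRN output at $\vz_t$ produces $\vz_{t+1/2}$ together with a dual element $\vu_{t+1/2}\in\partial \gI_{\gX}(\vx_{t+1/2})\times(-\partial\gI_{\gY}(\vy_{t+1/2}))$ satisfying $\mF(\vz_{t+1/2}) + \vu_{t+1/2} + \rho r_t (\vz_{t+1/2}-\vz_t)=\vr_t$ with $\Vert\vr_t\Vert \le \tfrac{\rho}{2}r_t^2=\tfrac{1}{2}\gamma r_t \Vert\vz_{t+1/2}-\vz_t\Vert$ and $r_t := \Vert\vz_t - \vz_{t+1/2}\Vert$, matching the HPE relative-error condition with $\sigma=\tfrac12$. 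The HPE telescoping then yields, for any $\vz\in\gZ$,
\begin{align*}
\sum_{t=0}^{T-1}\eta_t\,\langle \mF(\vz_{t+1/2})+\vu_{t+1/2},\,\vz_{t+1/2}-\vz\rangle \le \tfrac{1}{2}\Vert \vz - \vz_0\Vert^2,
\qquad \sum_{t=0}^{T-1} r_t^2 \le C\, d_0^2,
\end{align*}
for an absolute constant $C>0$. Setting $\vz=\vz^*$ gives the key ``gap'' bound $\sum_t \eta_t \langle \mF(\vz_{t+1/2})+\vu_{t+1/2}, \vz_{t+1/2}-\vz^*\rangle \le d_0^2/2$.

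Second, I would extract a lower bound on $\sum_t \eta_t$. Since $\eta_t = 1/(2\gamma r_t)$, the HPE bound $\sum_t r_t^2 \le C d_0^2$ rewrites as $\sum_t \eta_t^{-2} \le 4C\gamma^2 d_0^2$. By H\"older's inequality with exponents $(3,3/2)$,
\begin{align*}
T \;=\; \sum_{t=0}^{T-1} \eta_t^{-2/3}\cdot \eta_t^{2/3} \;\le\; \Bigl(\sum_t \eta_t^{-2}\Bigr)^{1/3}\Bigl(\sum_t \eta_t\Bigr)^{2/3},
\end{align*}
hence $\sum_t \eta_t \ge T^{3/2}/(2\sqrt{C}\,\gamma d_0)$. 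This is the standard $\tilde\gO(T^{-3/2})$ mechanism for NPE.

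Third, I would apply Lemma~\ref{lem:U-monotone} to the pair $\vg_1 = \mF(\vz_{t+1/2})+\vu_{t+1/2}\in\mA(\vz_{t+1/2})$ and $\vg_2 = 0 \in \mA(\vz^*)$, the latter following from the VI optimality of $\vz^*$. This gives $\langle \mF(\vz_{t+1/2})+\vu_{t+1/2},\,\vz_{t+1/2}-\vz^*\rangle \ge \tfrac{2\mu}{3}\Vert\vz_{t+1/2}-\vz^*\Vert^3$, so the gap bound upgrades to
\begin{align*}
\sum_{t=0}^{T-1}\eta_t\,\Vert \vz_{t+1/2}-\vz^*\Vert^3 \;\le\; \frac{3d_0^2}{4\mu}.
\end{align*}
Finally, using the convexity of $\Vert\cdot\Vert^3$ and Jensen's inequality on the weighted average $\vz_{\rm out}=(\sum_t \eta_t \vz_{t+1/2})/\sum_t \eta_t$,
\begin{align*}
\Vert \vz_{\rm out} - \vz^*\Vert^3 \;\le\; \frac{\sum_t \eta_t \Vert \vz_{t+1/2}-\vz^*\Vert^3}{\sum_t \eta_t} \;\le\; \frac{3d_0^2/(4\mu)}{T^{3/2}/(2\sqrt{C}\gamma d_0)} \;=\; \gO\!\left(\frac{\gamma d_0^3}{\mu\,T^{3/2}}\right),
\end{align*}
so $\Vert\vz_{\rm out}-\vz^*\Vert\le\epsilon$ whenever $T=\gO\bigl((\gamma d_0^3/(\mu\epsilon^3))^{2/3}\bigr)$, as claimed.

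The routine pieces (HPE descent and the Monteiro--Svaiter inequality) are inherited directly from prior work; the only genuinely new work is the two-line reduction via Lemma~\ref{lem:U-monotone} plus Jensen. The main obstacle I anticipate is just bookkeeping: ensuring that the normal-cone element $\vu_{t+1/2}$ produced by the CRN oracle is consistent in sign conventions so that Lemma~\ref{lem:U-monotone} applies verbatim, and verifying the HPE relative-error parameter $\sigma=1/2$ is indeed what the choice $\gamma=2\rho$ yields through Eq.~(\ref{eq:CRN-MS}). Both are mechanical and should follow the proof of Lemma~\ref{lem:CRN-is-MS}.
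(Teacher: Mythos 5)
Your proof takes essentially the same route as the paper: cite the Monteiro--Svaiter NPE regret bound, convert the monotone-operator regret to a cubic-distance bound via Lemma~\ref{lem:U-monotone}, and then use convexity of $\Vert\cdot\Vert^3$ with Jensen's inequality on the ergodic average. The paper simply quotes the composite regret estimate $\tfrac{1}{\sum_t\eta_t}\sum_t\eta_t\langle\mF(\vz_{t+1/2}),\vz_{t+1/2}-\vz^*\rangle=\gO(\gamma d_0^3/T^{3/2})$ from prior work (Eq.~(\ref{eq:regret-NPE})), whereas you unpack it into the gap inequality plus the H\"older step bounding $\sum_t\eta_t$ from below; that unpacking is correct and is exactly what the cited works do internally.

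One small but worthwhile simplification that resolves your stated obstacle: you do not need to carry the CRN dual element $\vu_{t+1/2}$ through the argument at all. The extragradient step in Algorithm~\ref{alg:NPE} uses only $\mF(\vz_{t+1/2})$, so the standard projected-HPE telescoping gives the gap bound with $\mF(\vz_{t+1/2})$ alone (your variant with $\mF(\vz_{t+1/2})+\vu_{t+1/2}$ on the left-hand side is a \emph{stronger} claim, since $\langle\vu_{t+1/2},\vz_{t+1/2}-\vz^*\rangle\ge 0$, and it does not fall directly out of the projected step). The paper applies Lemma~\ref{lem:U-monotone} with $\vg_1=\mF(\vz_{t+1/2})$, which lies in $\mA(\vz_{t+1/2})$ by choosing the zero element of each normal cone (legal since $\vz_{t+1/2}\in\gZ$), and $\vg_2=0\in\mA(\vz^*)$ by the VI optimality of $\vz^*$; this side-steps the sign-convention question about $\vu_{t+1/2}$ entirely. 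Once you make that substitution, the rest of your argument matches the paper's proof line for line.
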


\begin{proof}
It is known \citep{monteiro2012iteration,adil2022optimal,bullins2022higher} that Algorithm \ref{alg:NPE} ensures 
\begin{align} \label{eq:regret-NPE}
    {\rm Reget} :=\frac{1}{\sum_{t=0}^{T-1} \eta_t} \sum_{t=0}^{T-1} \eta_t \langle  \mF(\vz_{t+1/2}), \vz_{t+1/2} - \vz^* \rangle = \gO\left( \frac{\gamma \Vert \vz_0 - \vz^* \Vert^3}{T^{3/2}} \right).
\end{align} 
We further use Lemma \ref{lem:U-monotone}, the convexity of $\Vert \cdot \Vert^3$ and Jensen's inequality to derive that
\begin{align*}
    \Vert \vz_{\rm out} - \vz^* \Vert^3 \le \frac{1}{\sum_{t=0}^{T-1} \eta_t} \sum_{t=0}^{T-1} \eta_t \Vert \vz_{t+1/2} - \vz^*\Vert^3 \le \frac{3}{2 \mu} {\rm Regret}, 
\end{align*}
which leads to the result.

\end{proof}

\newpage
\subsection{Guarantee of the LEN Subroutine}  \label{apx:LEN-const}

\begin{algorithm*}[t]  
\caption{LEN$(\vz_0, T, \gamma) $
}\label{alg:LEN}
\begin{algorithmic}[1] 
\renewcommand{\algorithmicrequire}{ \textbf{Input:}}
\FOR{$t = 0,\cdots, T-1$}  
\STATE \quad $\vz_{t+/2}, \vu_{t+1/2} = {\rm LazyCRN}( \vz_t,  \vz_{\pi(t)},  \gamma)$ \\
\STATE \quad $\eta_t = \frac{1}{2 \gamma \Vert \vz_t - \vz_{t+1/2} \Vert}$ \\
\STATE \quad $\vz_{t+1} = \arg \min_{\vz \in \gZ} \left\{ \langle \eta_t \mF(\vz_{t+1/2}), \vz - \vz_t \rangle + \frac{1}{2} \Vert \vz - \vz_t \Vert^2  \right\} $.
\ENDFOR \\
\RETURN $z_{\rm out} = \frac{1}{\sum_{t=0}^{T-1} \eta_t} \sum_{t=0}^{T-1} \eta_t \vz_{t+1/2}$ \\
\end{algorithmic}
\end{algorithm*}

\begin{algorithm*}[t]  
\caption{LEN-restart$(\vz_0, T, \gamma, S) $
}\label{alg:LEN-restart}
\begin{algorithmic}[1] 
\renewcommand{\algorithmicrequire}{ \textbf{Input:}}
\STATE $\vz^{(0)} = \vz_0$ \\
\FOR{$s=0,\cdots,S-1$}
\STATE \quad $\vz^{(s+1)} =\text{LEN}(\vz^{(s)}, T, \gamma) $ \\
\ENDFOR \\
\RETURN $\vz^{(S)}$
\end{algorithmic}
\end{algorithm*}

This section briefly reviews the results in the recent work \citep{chen2024second,chen2025computationally}. \citet{chen2024second} proposed the Lazy Extra Newton (LEN) method as the lazy version of NPE, and in the subsequent work, \citet{chen2025computationally} proposed the accelerated LEN (A-LEN) as the lazy version of A-NPE. Instead of using the CRN oracle, they use the following lazy CRN oracle proposed by \citet{doikov2023second} to further reduce the computational complexity of NPE and A-NPE. 
\begin{dfn}
    A lazy CRN oracle for Problem (\ref{prob:main}) takes the query point $\bar \vz \in \gZ$, the snapshot point $\vz_{\rm ss}$, and the regularization parameter $\gamma>0$ as inputs, and returns $(\vz,\vu) = {\rm CRN}(\bar \vz, \vz_{\rm ss}, \gamma)$ satisfies:
\begin{align*}
    \langle \mF(\bar \vz) + \nabla \mF(\vz_{\rm ss})(\vz - \bar \vz) + \frac{\gamma}{2} \Vert \vz - \bar \vz \Vert (\vz - \bar \vz), \vz' - \vz  \rangle \ge 0, ~\forall \vz' \in \gZ; \\
    \vu =  -\left(\mF(\bar\vz) + \nabla \mF(\vz_{\rm ss}) (\vz - \bar \vz) + \frac{\gamma}{2} \Vert \vz - \bar \vz \Vert 
    (\vz- \bar \vz)  \right) \in 
    \begin{bmatrix}
        \partial \gI_{\gX} (\vx) \\
        -\partial \gI_{\gY}(\vy)
    \end{bmatrix}.
\end{align*}
\end{dfn}

Using their result, we know that  $T_{\rm saddle}(\rho, \mu) = m+ m^{2/3} (\rho / \mu)^{2/3}$ and $T_{\rm min}(\rho, \mu) = m+ m^{5/7} (\rho/ \mu)^{2/7}$. 
A small difference is that the work \citep{chen2024second,chen2025computationally} only analyzed the unconstrained case ($\gX = \sR^{d_x}$ and $\gY = \sR^{d_y}$), but we need the results for the constraints sets $\gX$ and $\gY$ here. We remark that essentially all the analysis in the work \citep{chen2024second,chen2025computationally} holds under the constrained setting.  Below, we show the convergence of LEN-restart (Algorithm \ref{alg:LEN-restart}), which invokes LEN (Algorithm \ref{alg:LEN}) as a subroutine. In Algorithm \ref{alg:LEN}, we use the notation $\pi(t) = t - t \mod m$.
\begin{thm}[LEN-restart] \label{thm:LEN-restart}
Under Assumption \ref{asm:UC-UC} with $\mu_x = \mu_y = \mu$ and Assumption \ref{asm:Hess-lip}, running Algorithm \ref{alg:NPE-restart} with $\gamma = \gO(m \rho)$ and $T = \gO( m+ (\gamma/\mu)^{2/3})$ and $S = \gO(\log (d_0/\epsilon))$ returns a point $\vz^{(S)}$ such that $\Vert \vz^{(S)} - \vz^* \Vert \le \epsilon$, where $\vz^*$ is the unique solution to Problem (\ref{prob:VI}) and $d_0 = \Vert \vz_0 - \vz^* \Vert$.
 
\end{thm}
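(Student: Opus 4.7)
My plan is to imitate the structure of the proof of Theorem \ref{thm:NPE-restart} line-for-line, replacing the single-epoch NPE guarantee (Theorem \ref{thm:NPE}) by an analogous guarantee for LEN. Concretely, I will first establish that one call to Algorithm \ref{alg:LEN} with $\gamma = \Theta(m\rho)$ and $T = \gO(m + (\gamma/\mu)^{2/3})$ outputs a point $\vz_{\rm out}$ satisfying $\Vert \vz_{\rm out} - \vz^* \Vert \le \tfrac{1}{2} \Vert \vz_0 - \vz^* \Vert$. The outer restart loop of Algorithm \ref{alg:LEN-restart} then contracts the distance geometrically, so after $S = \lceil \log_2(d_0 / \epsilon) \rceil$ epochs we obtain $\Vert \vz^{(S)} - \vz^* \Vert \le \epsilon$, exactly as in the proof of Theorem \ref{thm:NPE-restart}.

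\textbf{Single-epoch bound.} The single-epoch contraction rests on a regret inequality for LEN of the form
\begin{align*}
 \mathrm{Regret} := \frac{1}{\sum_{t=0}^{T-1}\eta_t} \sum_{t=0}^{T-1} \eta_t \langle \mF(\vz_{t+1/2}), \vz_{t+1/2} - \vz^* \rangle = \gO\!\left( \frac{\gamma \, \Vert \vz_0 - \vz^* \Vert^3}{(T - m)^{3/2}} \right),
\end{align*}
valid whenever $\gamma = \Theta(m \rho)$ and $T \gtrsim m$. This is precisely the bound proved for the unconstrained case in \citet{chen2024second,chen2025computationally}; I would import their analysis, noting that the only places where $\gZ = \sR^d$ is used are (i) the feasibility of the lazy CRN output and (ii) the optimality step of the extragradient update, and both already hold in Definition \ref{dfn:cubic-VI} via the normal-cone inclusion $\vu_{t+1/2} \in \partial \gI_{\gZ}(\vz_{t+1/2})$ and the projection built into the update of $\vz_{t+1}$ in Algorithm \ref{alg:LEN}. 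The underlying telescoping potential relies only on monotonicity of $\mF + \partial \gI_{\gZ}$, so Assumption \ref{asm:Hess-lip} and Assumption \ref{asm:UC-UC} suffice.

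\textbf{From regret to distance.} Once the regret bound is in hand, I would convert it to a distance bound exactly as in the proof of Theorem \ref{thm:NPE}. Uniform strong monotonicity (Lemma \ref{lem:U-monotone}) together with convexity of $\Vert \cdot \Vert^3$ and Jensen's inequality against the weights $\eta_t/\sum_t \eta_t$ yields
\begin{align*}
 \Vert \vz_{\rm out} - \vz^* \Vert^3 \le \frac{1}{\sum_{t=0}^{T-1}\eta_t} \sum_{t=0}^{T-1} \eta_t \Vert \vz_{t+1/2} - \vz^* \Vert^3 \le \frac{3}{2\mu} \, \mathrm{Regret}.
\end{align*}
Combining with the regret bound and requiring the right-hand side to fall below $(d_0/2)^3$ gives $T - m = \gO((\gamma/\mu)^{2/3})$, that is $T = \gO(m + (\gamma/\mu)^{2/3})$, which matches the statement. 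Iterating the contraction over $S$ epochs finishes the proof.

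\textbf{Main obstacle.} The principal technical hurdle is justifying the choice $\gamma = \Theta(m \rho)$: the cubic regularizer must absorb the accumulated error from reusing $\nabla \mF(\vz_{\pi(t)})$ for up to $m$ consecutive steps, which by Assumption \ref{asm:Hess-lip} can be as large as $\rho \sum_{s=\pi(t)}^{t-1} \Vert \vz_{s+1} - \vz_s \Vert$. Controlling this cumulative drift within a cycle is the key step that distinguishes the lazy analysis from the vanilla NPE analysis and is the source of the extra $m$ factor; I would handle it by directly invoking the per-cycle bookkeeping of \citet{chen2025computationally}. Everything beyond this reduction is a routine adaptation of the proof of Theorem \ref{thm:NPE-restart}.
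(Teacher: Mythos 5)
Your proposal takes essentially the same route as the paper: reduce via restart to the single-epoch convex-concave case, establish the LEN regret bound $\gO(\gamma d_0^3/T^{3/2})$, and convert to a distance contraction using Lemma~\ref{lem:U-monotone} and Jensen's inequality, exactly as in the proof of Theorem~\ref{thm:NPE}. The one substantive step that the paper carries out explicitly and you only gesture at is re-deriving LEN's one-step telescoping inequality in the constrained setting — the paper works through the variational optimality conditions of the lazy Newton step and the extragradient step (keeping the normal-cone terms) and bounds the lazy-Hessian error $\Vert \mF(\vz_{t+1/2}) - \tilde\mF(\vz_{t+1/2})\Vert^2$ via Hessian Lipschitzness and Young's inequality — but you correctly identified that these are the only two places where $\gZ = \sR^d$ enters, so your plan would go through.
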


\begin{proof}
As shown in Theorem \ref{thm:NPE}, the restart scheme can transform the convergence under the convex-concave setting to the convergence under the uniformly-convex-uniformly-concave setting in a black-box manner. Therefore, we only need to show the convergence of LEN (Algorithm \ref{alg:LEN}) under the convex-concave setting. The proof is essentially the same as \citep[Theorem 4.1]{chen2024second}, except here we consider the constrained case. Below, we show that all the proofs in \citep[Theorem 4.1]{chen2024second} also hold under the constrained case. 

Using the first-order optimality condition in the extragradient step, we have that
\begin{align} \label{eq:opt-extra}
    0 \le \langle \eta_t \mF(\vz_{t+1/2}) + \vz_{t+1} - \vz_t, \vz - \vz_{t+1} \rangle, \quad \forall \vz \in \gZ.
\end{align}
Using the first-order optimality condition in the lazy Newton step, we have that
\begin{align}\label{eq:opt-lazy-Newton}
    0 \le \langle \eta_t \tilde \mF(\vz_{t+1/2}) + \vz_{t+1/2} - \vz_t , \vz - \vz_{t+1/2} ), \quad \forall \vz \in \gZ,
\end{align}
where $\tilde \mF(\vz_{t+1/2}) = \mF(\vz_{t}) + \nabla \mF(\vz_{\pi(t)}) (\vz_{t+1/2} - \vz_t)$. 
They together imply that
\begin{align} \label{eq:eg-1}
\begin{split}
     &\quad \eta_t \langle \mF(\vz_{t+1/2}), \vz_{t+1/2} - \vz \rangle \\
    &= \eta_t \langle \mF(\vz_{t+1/2}), \vz_{t+1} - \vz \rangle + \eta_t \langle \mF(\vz_{t+1/2}), \vz_{t+1/2} - \vz_{t+1}   \rangle \\ 
    &= \eta_t \langle \mF(\vz_{t+1/2}), \vz_{t+1} - \vz \rangle + \eta_t \langle \tilde \mF(\vz_{t+1/2}), \vz_{t+1/2} - \vz_{t+1}   \rangle \\
    &\quad + \eta_t \langle \mF(\vz_{t+1/2}) - \tilde \mF(\vz_{t+1/2}), \vz_{t+1/2} - \vz_{t+1} \rangle  \\
    &\le \langle \vz_{t+1} - \vz_t, \vz - \vz_{t+1} \rangle + \langle  \vz_{t+1/2} - \vz_t, \vz_{t+1} - \vz_{t+1/2}  \rangle \\
    &\quad + \eta_t \langle \mF(\vz_{t+1/2}) - \tilde \mF(\vz_{t+1/2}), \vz_{t+1/2} - \vz_{t+1} \rangle.
\end{split}
\end{align}
Next, using identity $\langle \va, \vb \rangle =  \frac{1}{2} \Vert \va + \vb \Vert^2 - \frac{1}{2} \Vert \va \Vert^2 + \frac{1}{2} \Vert \vb \Vert^2$ and Young's inequality yields
\begin{align*}
    &\quad \eta_t \langle \mF(\vz_{t+1/2}), \vz_{t+1/2} - \vz \rangle \\
    &\le \frac{1}{2} \Vert \vz - \vz_t \Vert^2 \bcancel{- \frac{1}{2} \Vert \vz_{t+1} - \vz_t \Vert^2} - \frac{1}{2} \Vert \vz - \vz_{t+1} \Vert^2 \\
    &\quad + \bcancel{\frac{1}{2} \Vert \vz_{t+1} - \vz_t \Vert^2} - \frac{1}{2} \Vert \vz_{t+1/2} - \vz_t \Vert^2 - \frac{1}{2} \Vert \vz_{t+1} - \vz_{t+1/2} \Vert^2 \\
    &\quad + \underbrace{\eta_t^2 \Vert \mF(\vz_{t+1/2}) - \tilde \mF(\vz_{t+1/2}) \Vert^2}_{(*)} + \frac{1}{4} \Vert \vz_{t+1} - \vz_{t+1/2} \Vert^2.
\end{align*}
We can upper bound (*) using Young's inequality and Assumption \ref{asm:Hess-lip} as
\begin{align*}
    &\quad \eta_t^2 \Vert \mF(\vz_{t+1/2}) - \tilde \mF(\vz_{t+1/2}) \Vert^2 \\
    &\le 2 \eta_t^2 \Vert \mF(\vz_{t+1/2}) - \mF(\vz_t) - \nabla \mF(\vz_t) (\vz_{t+1/2} - \vz_t) \Vert^2 \\
    &\quad + 2 \eta_t^2 \Vert (\nabla \mF(\vz_t) - \nabla \mF(\vz_{\pi(t)}) (\vz_{t+1/2} - \vz_t) \Vert^2 \\
    &\le \frac{\eta_t^2 \rho^2}{2} \Vert \vz_{t+1/2} - \vz_t \Vert^4  + 2 \eta_t^2 \rho^2 \Vert \vz_t - \vz_{\pi(t)} \Vert^2 \Vert \vz_{t+1/2} - \vz_t \Vert^2  \\
    &= \frac{\rho^2}{8 \gamma^2} \Vert \vz_{t+1/2} - \vz_t \Vert^2 + \frac{\rho^2}{2\gamma^2} \Vert \vz_t - \vz_{\pi(t)} \Vert^2.
\end{align*}
Finally, it leads to 
\begin{align*}
    &\quad \eta_t \langle \mF(\vz_{t+1/2}), \vz_{t+1/2} - \vz \rangle \\
    &\le \frac{1}{2} \Vert \vz - \vz_t \Vert^2 - \frac{1}{2} \Vert \vz - \vz_{t+1} \Vert^2 - \frac{1}{4} \Vert \vz_{t+1} - \vz_{t+1/2} \Vert^2 \\
    &\quad - \left(\frac{1}{2} - \frac{\rho^2}{8 \gamma^2} \right) \Vert \vz_{t+1/2} - \vz_t \Vert^2 + \frac{\rho^2}{2\gamma^2} \Vert \vz_t - \vz_{\pi(t)} \Vert^2.
\end{align*}
This matches \citep[Lemma 4.1]{chen2024second} under the unconstrained setting up to only constants. Then we can follow the proof of \citep[Theorem 4.1]{chen2024second} to conclude Theorem \ref{thm:LEN-restart}.
\end{proof}

Similarly, the result of the (restarted) A-LEN Algorithm \citep[Theorem 5.3]{chen2025computationally} also hold under the constrained setting, as stated below.

\begin{thm}[ALEN-restart]
Assume $h(\vz): \gZ \rightarrow \sR$ is $\mu$-uniformly convex and has $\rho$-Lipschitz continuous Hessians. 
There exists a second-order algorithm, specifically, the restart version of \citep[Algorithm 5.1]{chen2025computationally}, that reuses Hessians every $m$ iterations, and 
returns a point $\vz$ such that $\Vert \vz - \vz^* \Vert \le \epsilon$ in $\gO\left( m+ m^{5/7} (\rho/\mu)^{2/7} \log (d_0/ \epsilon) \log m  \right)$ lazy CRN oracle calls.
\end{thm}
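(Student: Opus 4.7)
The plan is to mirror the proof strategy used for Theorem~\ref{thm:ANPE-restart} (AIPE-restart) and for the search-free A-NPE: take the convex-case guarantee of A-LEN from \citet{chen2025computationally}, then wrap it in an outer restart loop whose geometric contraction is supplied by third-order uniform convexity via Lemma~\ref{lem:UC-grad-dominant}. Concretely, the restart version of \citep[Algorithm 5.1]{chen2025computationally} runs A-LEN in epochs $s=0,1,\ldots,S-1$, starting each epoch from $\vz^{(s)}$ and producing $\vz^{(s+1)}$.

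First, I would invoke the convex-case guarantee of A-LEN on $\gZ$: for a convex function $h$ with $\rho$-Lipschitz Hessian, A-LEN with lazy Hessian updates every $m$ steps returns a point $\vz$ with $h(\vz)-h(\vz^*) \le \eta$ using
\[
\gO\!\left(m + m^{5/7}\bigl(\rho\,\Vert \vz^{(s)}-\vz^* \Vert^3 / \eta\bigr)^{2/7}\log m\right)
\]
lazy CRN oracle calls (the $\log m$ factor tracks the extrapolation/line-search bookkeeping inside A-LEN). As in Appendix~\ref{apx:LEN-const}, I would note that although \citet{chen2025computationally} state their result for the unconstrained case, the same proof goes through on a convex set $\gZ$ because all the first-order optimality conditions they use (the extragradient and the lazy-Newton inequalities analogous to Eqs.~(\ref{eq:opt-extra})--(\ref{eq:opt-lazy-Newton})) carry over verbatim, exactly as I already argued for LEN-restart in Theorem~\ref{thm:LEN-restart}.

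Next, I would use $\mu$-uniform convexity of order~$3$ to translate a suboptimality bound into a distance bound. By Lemma~\ref{lem:UC-grad-dominant}, if $h(\vz^{(s+1)})-h(\vz^*) \le (\mu/3)\bigl(\Vert \vz^{(s)}-\vz^*\Vert/2\bigr)^3$, then $\Vert \vz^{(s+1)}-\vz^*\Vert \le \tfrac{1}{2}\Vert \vz^{(s)}-\vz^*\Vert$. Setting $\eta = (\mu/24)\Vert \vz^{(s)}-\vz^*\Vert^3$ in the convex-case bound, the factor of $\Vert \vz^{(s)}-\vz^*\Vert^3$ cancels, so each epoch contracts the distance to $\vz^*$ by a factor of two using only
\[
\gO\!\left(m + m^{5/7}(\rho/\mu)^{2/7}\log m\right)
\]
lazy CRN oracle calls, independently of how far $\vz^{(s)}$ is from $\vz^*$. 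Iterating for $S = \lceil \log_2(d_0/\epsilon) \rceil$ epochs yields $\Vert \vz^{(S)}-\vz^*\Vert \le \epsilon$ and a total oracle count of $\gO\bigl((m + m^{5/7}(\rho/\mu)^{2/7}\log m)\log(d_0/\epsilon)\bigr)$, which is exactly the claimed bound.

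The main obstacles I anticipate are bookkeeping rather than conceptual. The first is justifying that the A-LEN convergence rate of \citet{chen2025computationally} genuinely transfers to the constrained setting on $\gZ$; I would handle this exactly as in the proof of Theorem~\ref{thm:LEN-restart}, tracking that every use of unconstrained optimality can be replaced by the variational inequality form without affecting the telescoping identities. The second is that A-LEN, like A-NPE, produces its guarantee on function-value suboptimality rather than directly on distance, which is why the $\mu$-uniform convexity lower bound $h(\vz)-h(\vz^*)\ge (\mu/3)\Vert \vz-\vz^*\Vert^3$ is essential for closing the restart loop; this is the same mechanism as in Theorem~\ref{thm:ANPE-restart}, so no new analytic ideas are needed.
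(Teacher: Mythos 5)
Your proposal takes essentially the same route as the paper: invoke the convex-case A-LEN guarantee of \citet{chen2025computationally}, wrap it in a restart scheme whose geometric contraction is supplied by third-order uniform convexity via Lemma~\ref{lem:UC-grad-dominant} (with $\eta \propto \mu \Vert \vz^{(s)}-\vz^*\Vert^3$ so the distance factor cancels), and treat the constrained-case extension as a bookkeeping exercise. The paper's own proof is even terser than yours—it defers the restart argument to the proof of Theorem~\ref{thm:ANPE-restart} and justifies the constrained extension by citing \citet[Appendix~F]{doikov2023second} on the lazy CRN oracle rather than by re-deriving the extragradient and lazy-Newton optimality inequalities as you suggest via the Theorem~\ref{thm:LEN-restart} argument—so your version is correct and if anything somewhat more explicit.
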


\begin{proof}
As shown in Theorem \ref{thm:ANPE-restart}, the
restart scheme transforms the convergence under the convex setting to the uniformly convex setting in a black-box manner. Hence, we only need to show the convergence of A-LEN \citep[Theorem 5.3]{chen2025computationally} under the convex setting. The result for the unconstrained case can be found in  \citep[Theorem 5.3]{chen2025computationally}. Essentially, the same result also holds under the constrained setting we consider here. It is because \citep[Algorithm 5.1]{chen2025computationally} invokes the lazy CRN \citep[Algorithm 1]{doikov2023second} as a subroutine, while the proof of lazy CRN can be readily extended to the constrained case \citep[Appendix F]{doikov2023second}.
\end{proof}

\end{document}